\newtheorem{thm}{Theorem}[section]
\newtheorem{cor}[thm]{Corollary}
\newtheorem{rem}[thm]{Remark}
\newtheorem{lem}[thm]{Lemma}
\newtheorem{prop}[thm]{Proposition}
\newcommand{\NCF}{\mathcal{N}_{\text{CF}}}
\title[CF-normal, absolutely abnormal numbers]{On the Borel complexity of continued fraction normal, absolutely abnormal numbers}
\author{S. Jackson, B. Mance, and J. Vandehey }
\date{\today}
\begin{document}

\maketitle

\begin{abstract}
    We show that normality for continued fractions expansions and normality for base-$b$ expansions are maximally logically separate. In particular, the set of numbers that are normal with respect to the continued fraction expansion but not base-$b$ normal for a fixed $b\ge 2$ is $D_2(\boldsymbol{\Pi}_3^0)$-complete. Moreover, the set of numbers that are normal with respect to the continued fraction expansion but not normal to \emph{any} base-$b$ expansion is $D_2(\boldsymbol{\Pi}_3^0)$-hard, confirming the existence of uncountably many such numbers, which was previously only known assuming the generalized Riemann hypothesis.
    
    By varying the method of proof we are also able to show that the set of base-$2$ normal, base-$3$ non-normal numbers is also $D_2(\boldsymbol{\Pi}_3^0)$-complete. We also prove an auxiliary  result on the normality properties of the continued fraction expansions of fractions with a fixed denominator. 
\end{abstract}

\section{Introduction}

Digital systems, such as base-$b$ expansions or continued fraction (CF) expansions, associate to a given number $x\in[0,1)$ a word $w=w(x)=a_1a_2a_3\dots$ of digits.\footnote{If we ignore rational numbers, then we can guarantee all CF expansions are infinite.} Each of these expansions is associated to a transformation, $T_b(x) = bx\pmod{1}$ for base-$b$ expansions and $T_{\text{CF}}(x) = 1/x\pmod{1}$ for CF expansions, that acts as a forward shift on the word of digits. These transformations preserve and are ergodic with respect to the Lebesgue measure $\lambda$ and the Gauss measure $\mu$, respectively, where 
\[
\mu(A) = \int_A \frac{dx}{(1+x)\log 2}.
\]
Generic points for these transformations are often referred to as \emph{normal} in their respective digital systems. Given the way these transformations behave on the word of digits, normality is most often defined as a digital property. Namely, let $N_u(x,n)$ denote the number of times the finite word $u$ appears in the first $n$ digits of $x$, that is, in $a_1a_2\dots a_n$, and for a finite word $u$, let $m(u)$ denote the measure (either Lebesgue or Gauss as appropriate) of the set of $x$ where $w(x)$ begins with $u$. Then $x$ is normal if for every finite word $u$, we have that
\[
\lim_{n\to \infty} \frac{N_u(x,n)}{n} = m(u).
\]

While several constructions of base-$b$ normal numbers \cite{Besicovitch,Champernowne,CE,DE,DKK,MTT,V2013} and CF normal numbers \cite{AKS,V16a} exist (see also \cite{BMV,MM} for examples in other systems), the general problem of determining whether a given number is normal is often intractable. Because of this, and because normality is a fundamental dynamic property, much recent attention has been given to the set of normal numbers instead of individual normal numbers. For convenience, we will denote the set of base-$b$ normal numbers by $\mathcal{N}_b$ and the set of CF normal numbers by $\NCF$. By the ergodicity of the associated shift map and the pointwise ergodic theorem, $\NCF$ and $\mathcal{N}_b$ for all $b\ge 2$ are of full Lebesgue measure. 

We consider two central questions: given two different digital systems, are the corresponding sets of normal numbers different, and when they are different, how complicated are the sets of normal numbers relative to one another? In the same way that we expect a randomly chosen number to be normal, unless it has some underlying structure connected to the digital system, we also expect two different digital systems to have different sets of normal numbers (that are in some sense maximally distinct), unless the two systems have some shared underlying structure.

In practice, it is far easier to identify the shared underlying structure than it is to show that no shared structure exists. So, for instance, it is known that $\mathcal{N}_b=\mathcal{N}_{b^k}$ for all $b\ge 2$, $k\in \mathbb{N}$ \cite{Maxfield}, and it is known that $\NCF$ is the same as the set of normal numbers for certain continued fraction variants \cite{KNnormal,V2014}. Some other examples of different digital systems with the same set of normal numbers can be found in  \cite{KNSchweigerProblem,Schweiger}. On the other hand, few examples of systems with different sets of normal numbers are known. Using an intricate product measure argument, it was shown that if $b,c\ge 2$ \emph{cannot} be written as $b^r=c^s$ for integers $r,s$, then $\mathcal{N}_b\setminus \mathcal{N}_c$ is uncountable \cite{Cassels,Schmidt} (see also \cite{Schmidt62}). Even less is known about how base-$b$ normality relates to other forms of normality. A result of the third author \cite{V16} states that the set $\NCF\setminus \bigcup_{b\ge 2} \mathcal{N}_b$ of CF-normal and absolutely abnormal---i.e., not normal to any base---numbers is uncountable, but the proof is conditional on the generalized Riemann hypothesis.

One consequence of the results of this paper will provide the following unconditional result:
\begin{cor}\label{cor:main}
The set $\NCF\setminus \bigcup_{b\ge 2} \mathcal{N}_b$ of CF-normal and absolutely abnormal numbers is uncountable. Moreover, for any prime $p$, 
\[
\left( \NCF \cap \bigcup_{b=p^k} \mathcal{N}_b \right) \setminus  \bigcup_{\substack{b\ge 2\\ b\neq p^k}} \mathcal{N}_b \text{ is uncountable,}
\] so there exist uncountably many numbers which are CF-normal and normal to every base which is a power of $p$, but not normal for any other base.
\end{cor}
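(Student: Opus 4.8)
The plan is to deduce both assertions from the descriptive set-theoretic complexity results of the paper, using the single structural principle that a set which is $D_2(\boldsymbol{\Pi}_3^0)$-hard cannot be countable. To justify this principle, recall that every countable subset of a Polish space is $\boldsymbol{\Sigma}_2^0$, being a countable union of the closed singletons it contains, and that $\boldsymbol{\Sigma}_2^0 \subseteq D_2(\boldsymbol{\Pi}_3^0)$ with proper inclusion: any $S \in \boldsymbol{\Sigma}_2^0 \subseteq \boldsymbol{\Pi}_3^0$ is exhibited as a difference $S = S \setminus \emptyset$ of two $\boldsymbol{\Pi}_3^0$ sets, while $D_2(\boldsymbol{\Pi}_3^0)$ properly extends $\boldsymbol{\Sigma}_2^0$ in the Borel hierarchy. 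Consequently, if a set were both $D_2(\boldsymbol{\Pi}_3^0)$-hard and countable, then by Wadge reduction every $D_2(\boldsymbol{\Pi}_3^0)$ set would be $\boldsymbol{\Sigma}_2^0$, contradicting this proper inclusion. Hence any $D_2(\boldsymbol{\Pi}_3^0)$-hard set is uncountable.

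For the first assertion I would apply this principle directly. The paper establishes that $\NCF \setminus \bigcup_{b \ge 2} \mathcal{N}_b$ is $D_2(\boldsymbol{\Pi}_3^0)$-hard, and uncountability is then immediate.

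For the second assertion I would first invoke Maxfield's theorem $\mathcal{N}_b = \mathcal{N}_{b^k}$ to rewrite the set. Since $\mathcal{N}_p = \mathcal{N}_{p^2} = \cdots$, we have $\bigcup_{b = p^k} \mathcal{N}_b = \mathcal{N}_p$, so the target is $(\NCF \cap \mathcal{N}_p) \setminus \bigcup_{b \ge 2,\, b \ne p^k} \mathcal{N}_b$, whose membership records being CF-normal, normal to every power of $p$, and non-normal to every base outside the multiplicative-dependence class of $p$. The plan is then to show this set is likewise $D_2(\boldsymbol{\Pi}_3^0)$-hard by adapting the reduction used for the base-$2$-normal, base-$3$-non-normal completeness result, replacing base-$2$ normality by base-$p$ normality (equivalently, normality to every power of $p$) and base-$3$ non-normality by non-normality to every base $b$ that is not a power of $p$. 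The same uncountability principle then finishes the proof.

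The main obstacle is the construction underlying this last reduction: one must produce, continuously in the reduction parameter, numbers that are simultaneously CF-normal, base-$p$ normal, and non-normal to all of the infinitely many multiplicatively independent bases outside the powers of $p$. Forcing non-normality to infinitely many independent bases at once, while preserving both CF-normality and base-$p$ normality, is exactly the independence phenomenon handled by the product-measure arguments of Cassels and Schmidt, and marrying those arguments to the continued-fraction machinery that controls $\NCF$ is where the real work lies; the bookkeeping that upgrades the construction to $D_2(\boldsymbol{\Pi}_3^0)$-hardness should then be routine by comparison.
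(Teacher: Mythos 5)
Your treatment of the first assertion is correct and is exactly the paper's argument: a countable set is $\boldsymbol{\Sigma}_2^0$, pointclasses are closed under continuous preimages, and $D_2(\boldsymbol{\Pi}_3^0)\not\subseteq\boldsymbol{\Sigma}_2^0$, so the hardness statement in Theorem \ref{thm:main} rules out countability.

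The second assertion is where there is a genuine gap, and it is not mere bookkeeping. You propose to adapt the reduction behind Theorem \ref{thm:secondary}, and to force non-normality to the infinitely many bases outside $\{p^k\}$ by importing the Cassels--Schmidt product-measure arguments. Both choices point away from what actually works. The construction for Theorem \ref{thm:secondary} is precisely the variant in which the paper \emph{discards} the two devices this corollary needs: the appended huge digit $M$ (which is what breaks normality to every base with an odd prime factor) and the blocks $(1)^K$ (which are what tie CF-normality to $z\in C$); adapting that construction leaves you with no control over $\NCF$ at all. And the Cassels--Schmidt arguments yield cardinality statements, not continuous Wadge reductions, and say nothing about continued fractions; the paper's mechanism for killing all bases outside the powers of a distinguished prime needs no independence or measure input: for each other prime $q$, infinitely often the construction makes a convergent of $\phi(z)$ a $q$-adic rational followed by an enormous partial quotient, which forces a long run of $0$'s or $(bq-1)$'s in base $bq$ simultaneously for all $b\ge 2$.

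In fact no new construction is needed: the proof of Theorem \ref{thm:main}, rerun with $p$ in the distinguished role of $2$ (reorder the $b_{i,j}$ so that $b_{i,1}=p$ and state conditions (D) and (E) in base $p$, using Lemma \ref{lem:Copeland-Erdos fraction variant 2} for the co-prime base comparisons), produces a continuous $\phi$ such that for $z\in C\cap D$ the number $\phi(z)$ is CF-normal, base-$p$ normal (hence normal to every $p^k$ by Maxfield), and normal to no other base, while for $z\notin C\cap D$ it fails CF-normality or base-$p$ normality. Thus $\phi^{-1}$ of the target set is exactly $C\cap D=\{z: z(n)\to\infty\}$, a $\boldsymbol{\Pi}_3^0$-complete set. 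This gives $\boldsymbol{\Pi}_3^0$-hardness of the target set --- not the $D_2(\boldsymbol{\Pi}_3^0)$-hardness you aim for, which this reduction cannot deliver because the non-normality half of the target holds identically on the range of $\phi$ --- and $\boldsymbol{\Pi}_3^0$-hardness already suffices for uncountability by your own principle. This is what the paper means by the second half being ``implicit in the method of proof.''
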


Knowing that the set $\NCF\setminus \bigcup_{b\ge 2} \mathcal{N}_b$ is non-empty, we can consider various ways to measure the size and complexity of this set. In this paper, we opt to consider complexity from the perspective of descriptive set theory and the Borel hierarchy.

We will briefly recall the definitions of the (boldface) Borel hierarchy. We let $\boldsymbol{\Pi}_1^0$ (and $\boldsymbol{\Sigma}_1^0$) denote the class of all closed (and open, respectively) subsets of $\mathbb{R}$. We then iteratively define $\boldsymbol{\Pi}_\alpha^0$ (and $\boldsymbol{\Sigma}_{\alpha}^0$) to be the countable intersection (union, resp.) of sets belonging to $\boldsymbol{\Sigma}_{\beta}^0$ ($\boldsymbol{\Pi}_{\beta}^0$, resp.) with $\beta<\alpha$. Since $\mathbb{R}$ is a metric space, we have for all $\alpha$ that $\boldsymbol{\Pi}_\alpha^0 \subset \boldsymbol{\Pi}_{\alpha+1}^0$ and $\boldsymbol{\Sigma}_\alpha^0 \subset \boldsymbol{\Sigma}_{\alpha+1}^0$; moreover, since $\mathbb{R}$ is an uncountable Polish space, these subset inclusions are strict. We will also consider the difference hierarchy and define the class 
\[
D_2(\boldsymbol{\Pi}_\alpha^0):= \left\{ A\setminus B : A,B\in \boldsymbol{\Pi}_{\alpha}^0 \right\}
\]
and define $D_2(\boldsymbol{\Sigma}_{\alpha}^0)$ similarly.\footnote{One can also define $D_\beta(\boldsymbol{\Pi}_{\alpha}^0)$ more generally, but this will not be relevant to this paper.} The sets $D_2(\boldsymbol{\Pi}_\alpha^0)$ and $D_2(\boldsymbol{\Sigma}_\alpha^0)$ contain the sets $\boldsymbol{\Pi}_{\alpha}^0$ and   $\boldsymbol{\Sigma}_{\alpha}^0$ and are contained in the sets $\boldsymbol{\Pi}_{\alpha+1}^0$ and   $\boldsymbol{\Sigma}_{\alpha+1}^0$, so that the difference sets live between the levels of the Borel hierarchy. All the classes above are pointclasses, that is, they are closed under inverse images by continuous functions. For a given class $\Gamma$ in the hierarchy, we say that a subset $A\subset\mathbb{R}$ is $\Gamma$-hard if for every set $B$ in a (zero-dimensional) Polish space that belongs to class $\Gamma$, there exists a continuous function $f$ such that $B=f^{-1}(A)$. Since all the classes are pointclasses, this implies that the first appearance of $A$ in the hierarchy must be in $\Gamma$ or in a higher level. Functionally, $A$ is \emph{no simpler} than $\Gamma$. We then say that $A$ is $\Gamma$-complete if $A\in \Gamma$ and if $A$ is $\Gamma$-hard.

The notion of a set being $\Gamma$-complete can be interpreted as giving us the minimum number of integer quantifiers needed to define the set. For example, it is known that $\mathcal{N}_b$, for a fixed $b\ge 2$, is $\boldsymbol{\Pi}_3^0$-complete \cite{KL}. So consider the following definition of $\mathcal{N}_b$:
\begin{quote}
    The set $\mathcal{N}_b$ consists of all numbers $x$ such that for all finite strings $s$ of base-$b$ digits and all rational $\epsilon>0$, there exists $n_0\in \mathbb{N}$ such that for all $n\ge n_0$, we have 
    \[
    \left| \frac{N_s(x,n)}{n}- \frac{1}{b^{|s|}}\right|< \epsilon.
    \]
\end{quote}
This uses one\footnote{Technically there are two universal quantifiers, one for $s$ and one for $\epsilon$, but as these are consecutive, they are treated as a single quantifier} universal, one existential, and one universal quantifier, for a total of three quantifiers matching the $3$ in the subscript of $\boldsymbol{\Pi}_3^0$, and is thus the simplest possible logical description of this set. 

Similarly, suppose $A,B$ are both $\Gamma$-complete and $A\setminus B$ is $D_2(\Gamma)$-complete. If $A\cap B \subset C$ and $C\cap A\subset C\cap B$, then $C$ must be $\Gamma$-hard. This suggests that the sets $A$ and $B$ are as logically separate as it is possible for them to be: not only does belonging to $A$ not imply belonging to $B$, but any new condition which taken together with belonging to $A$ \emph{would} imply belonging to $B$ must be at least as logically complicated as belonging to $B$ in the first place.


Starting most notably with Ki and Linton \cite{KL}, several authors have conducted research into normal numbers from the perspective of descriptive set theory. The following facts are known:
\begin{itemize}
    \item The set $\NCF$ of CF-normal numbers is $\boldsymbol{\Pi}_3^0$-complete \cite{AireyEtAl}.
    \item The set $\bigcap_{b\ge 2} \mathcal{N}_b$ of absolutely normal numbers is $\boldsymbol{\Pi}_3^0$-complete \cite{BecherEtAl}.
    \item The set $\bigcup_{b\ge 2} \mathcal{N}_b$ of numbers which are normal to at least one base is $\boldsymbol{\Sigma}_4^0$-complete \cite{BS}. 
\end{itemize}
Further results of this type can be found in \cite{AMJ,BeBe,Beros}.

The primary goal of this paper is to prove the following result.
\begin{thm}\label{thm:main}
The set $\NCF\setminus \bigcup_{b\ge 2} \mathcal{N}_b$ of numbers that are normal with respect to the continued fraction expansion but not normal with respect to any base-$b$ expansion is  $D_2(\boldsymbol{\Pi}_3^0)$-hard. Moreover, for any fixed base $b\ge 2$, the set $\NCF\setminus \mathcal{N}_b$ of numbers that are normal with respect to the continued fraction expansion but not base-$b$ normal is $D_2(\boldsymbol{\Pi}_3^0)$-complete.
\end{thm}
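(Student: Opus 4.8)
The plan is to separate the two assertions. For the fixed-base statement the upper bound is immediate: since $\NCF$ and $\mathcal{N}_b$ are both $\boldsymbol{\Pi}_3^0$ (indeed $\boldsymbol{\Pi}_3^0$-complete by \cite{AireyEtAl,KL}), the set $\NCF\setminus\mathcal{N}_b$ is literally of the form $A\setminus B$ with $A,B\in\boldsymbol{\Pi}_3^0$, hence lies in $D_2(\boldsymbol{\Pi}_3^0)$ by definition. Thus everything reduces to proving $D_2(\boldsymbol{\Pi}_3^0)$-hardness, and the same hardness argument will cover the absolutely abnormal set. I would fix once and for all a $\boldsymbol{\Pi}_3^0$-complete set $P_0$ (for instance the set of $\alpha\in(\mathbb{N}^{\mathbb{N}})^{\mathbb{N}}$ all of whose rows are eventually $0$) and recall the standard fact that $P_0\times P_0^{c}$ is $D_2(\boldsymbol{\Pi}_3^0)$-complete. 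It then suffices to build a single continuous map $f$ with $f^{-1}(\NCF)=P_0\times X$ and $f^{-1}(\mathcal{N}_b)=X\times P_0$, for then $f^{-1}(\NCF\setminus\mathcal{N}_b)=(P_0\times X)\setminus(X\times P_0)=P_0\times P_0^{c}$.

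The crux is therefore a construction lemma asserting independent control: a continuous $f\colon X\times X\to[0,1)$ such that the \emph{continued-fraction} normality of $f(\alpha,\beta)$ depends only on $\alpha$ (holding iff $\alpha\in P_0$) while the \emph{base-$b$} normality of $f(\alpha,\beta)$ depends only on $\beta$ (holding iff $\beta\in P_0$). I emphasize that this cannot be assembled by gluing separate reductions for the two $\boldsymbol{\Pi}_3^0$-complete sets, because CF-normality and base-$b$ normality are coupled properties of the one output number; the content is that they can nonetheless be steered independently. Note also that merely making $f(\alpha,\beta)$ unconditionally CF-normal and controlling base-$b$ normality would give only $\boldsymbol{\Sigma}_3^0$-hardness, which is strictly weaker, so genuine two-sided control is essential.

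For the construction I would build $f(\alpha,\beta)$ by concatenating continued-fraction blocks along a rapidly growing schedule of breakpoints. The CF-normality coordinate is handled in the spirit of the $\boldsymbol{\Pi}_3^0$-completeness proof for $\NCF$ in \cite{AireyEtAl}: between breakpoints one appends long stretches of a fixed CF-normal word, occasionally interrupted by biasing stretches whose presence is governed by the finite data of $\alpha$, arranged so that the Cesàro digit statistics converge to the Gauss values iff every row of $\alpha$ is eventually $0$. To destroy base-$b$ normality according to $\beta$ while leaving these CF statistics untouched, I would, at a sparse (CF-density-zero) subsequence of breakpoints selected by $\beta$, insert a block forcing the point $f(\alpha,\beta)$ to be extremely close to a rational with a controlled denominator---so close that a long run of a biased base-$b$ pattern (e.g.\ a run of zeros, coming from near-agreement with a rational whose denominator divides a power of $b$) appears in the base-$b$ expansion. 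Here the auxiliary result on CF expansions of fractions with a fixed denominator supplies exactly the needed blocks: it lets me realize prescribed convergent denominators as finite CF words with controlled partial-quotient statistics, after which a single enormous partial quotient pins the point to the chosen rational. The decisive bookkeeping is that one CF digit can account for arbitrarily many base-$b$ digits, so these insertions are negligible for the CF frequencies (preserving the $\alpha$-controlled behavior) yet contribute positive upper density of a biased pattern to the base-$b$ expansion exactly when $\beta\notin P_0$.

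The main obstacle is precisely this simultaneous, two-sided and mutually non-interfering control, together with making the dependence on $(\alpha,\beta)$ continuous and the two biconditionals exact: I must verify that the base-$b$-destroying insertions are frequent enough to break base-$b$ normality iff $\beta\notin P_0$, sparse enough (in CF terms) to be invisible to the Gauss statistics, and that the $\alpha$-driven CF biasing neither accidentally creates nor repairs base-$b$ bias. For the absolutely abnormal statement I would strengthen the inserted blocks so that the forced rational approximations produce biased digit patterns \emph{simultaneously in every base} $b\ge2$ (the same near-agreement with a fraction of controlled denominator can be made to create long biased runs in all bases at once), which makes the identical reduction witness $D_2(\boldsymbol{\Pi}_3^0)$-hardness of $\NCF\setminus\bigcup_{b\ge2}\mathcal{N}_b$; the refinement in Corollary~\ref{cor:main} corresponds to choosing these denominators to be powers of a fixed prime $p$, so that normality survives in the bases $p^k$ but in no others. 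I claim only hardness there because $\bigcup_{b\ge2}\mathcal{N}_b$ is $\boldsymbol{\Sigma}_4^0$-complete \cite{BS}, so no matching $D_2(\boldsymbol{\Pi}_3^0)$ upper bound is available.
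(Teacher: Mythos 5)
Your reduction framework is the same as the paper's: the paper reduces from $C\setminus D$ with $C=\{z:z(2n+1)\to\infty\}$ and $D=\{z:z(2n)\to\infty\}$, which is exactly your $P_0\times P_0^{c}$ after interleaving coordinates, and the $D_2(\boldsymbol{\Pi}_3^0)$ membership bound is equally immediate there. The gaps are in the construction, and they are precisely the parts that constitute essentially all of the paper's work. First, you have no mechanism for the \emph{positive} base-$b$ direction. When $\beta\in P_0$ you must prove that $f(\alpha,\beta)$ \emph{is} base-$b$ normal; its base-$b$ expansion is (up to boundary effects) the concatenation of the digit strings $S_b(\cdot)$ determined by the successive CF cylinders, so every bulk block you insert needs a certificate that its associated base-$b$ string is increasingly $(\epsilon,k)$-normal. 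Neither of your ingredients provides one: repeating a fixed CF-normal word gives nothing (in the extreme, a periodic CF expansion is a quadratic irrational, whose base-$b$ normality is open), and Theorem \ref{thm:extra} controls only the \emph{CF} statistics of fractions with fixed denominator, saying nothing about their base-$b$ digits. Supplying that certificate is where the bulk of the paper's proof lives: conditions (D)/(E) in the construction and the Copeland--Erd\H{o}s-type counting (Lemmas \ref{lem:Copeland-Erdos var}, \ref{lem:Copeland-Erdos fraction variant}, \ref{lem:Copeland-Erdos fraction variant 2}), which show that \emph{most} fractions whose denominator is a power of an odd prime also have near-normal binary digit strings. Relatedly, your exact product requirement $f^{-1}(\mathcal{N}_b)=X\times P_0$ is stronger than such a construction can deliver: when $\alpha\notin P_0$, the CF-normality-breaking stretches (long runs of the CF digit $1$) have an uncontrolled effect on base-$b$ digits, which is why the paper proves only the conditional statement ($\phi(z)\in\mathcal{N}_2$ iff $z\in D$, \emph{given} $z\in C$) --- and, as you yourself note, the conditional version is all the difference computation needs.

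Second, your mechanism for absolute abnormality is false, and your reading of Corollary \ref{cor:main} inverts it. Near-agreement with $a/q$, $q=p^{k}$, creates long runs of $0$'s or $(b-1)$'s precisely in the bases $b$ divisible by $p$ (there $b^{f}a/p^{f}$ is an integer, so $b^{f}x$ is nearly integral); in a base $c$ coprime to $p$, the agreed stretch is merely an initial segment of the base-$c$ expansion of $a/p^{k}$, far shorter than its period, and for most $a$ this segment is \emph{close to normal} --- that is exactly Lemma \ref{lem:Copeland-Erdos fraction variant}, which the paper uses to \emph{preserve} base-$2$ normality at those stages. Hence no single family of prime-power denominators can break all bases at once: the paper must cycle $b_{i,j}$ through all odd primes, each stage breaking every base divisible by that prime, and must manage base $2$ (equivalently all bases $2^{k}$) by the entirely separate truncation device keyed to $z(2i)$. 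For the corollary, denominators that are powers of $p$ would destroy normality in the bases $p^{k}$, not preserve it; the paper instead keeps base $p$ normal via the (D)/(E)-type conditions while powers of all the \emph{other} primes do the breaking.
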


Since any countable set can be written as a countable union of singleton sets and is thus in $\boldsymbol{\Sigma}_2^0$, the above theorem implies the first half of Corollary \ref{cor:main} immediately. The second half is implicit in the method of proof.

Our above remarks on $D_2(\Gamma)$-completeness together with Theorem \ref{thm:main} show that there is no simple condition which can be combined CF-normality to imply base-$b$ normality. For example, base-$b$ richness is a weaker property than base-$b$ normality: a number $x$ is base-$b$ rich if every possible finite word appears at least once in $w(x)$. The set of base-$b$ rich numbers exists in $\Pi_2^0$, so therefore CF-normality and base-$b$ richness is not enough to imply base-$b$ normality.

We note that Theorem \ref{thm:main} is one-directional. To show that $\NCF\setminus \mathcal{N}_2$ (for example) is $D_2(\boldsymbol{\Pi}_3^0)$-complete, we use the fact that the typical dyadic rational has a finite CF expansion that is close to normal, but an infinite base-$2$ expansion that is very far from normal. To show that $\mathcal{N}_2\setminus \NCF$ is also $D_2(\boldsymbol{\Pi}_3^0)$-complete, we need a similarly large class of numbers that are close to base-$2$ normal but far from CF-normal. Quadratic irrationals, such as $\sqrt{3}$, are likely candidates, as they have a periodic and thus highly non-normal CF-expansion, but whether any of these quadratic irrationals is base-$2$ normal is still an open question. There are some known examples of base-$2$ normal, CF-non-normal numbers \cite{Korobov90}, so the set $\mathcal{N}_2\setminus \NCF$ is known to be non-empty; however, these examples do not have the flexibility required.

If we examine the proof of Theorem \ref{thm:main} closely and ignore any question of CF-normality, we see that we are constructing numbers that are normal to certain bases and non-normal to other bases. While the proof of Theorem \ref{thm:main} does not immediately tell use anything about the set-theoretic complexity of $\mathcal{N}_2\setminus \mathcal{N}_3$, we can vary the method to obtain the following result.

\begin{thm}\label{thm:secondary}
If $b,c\ge 2$ are relatively prime integers, then the set $\mathcal{N}_b\setminus \mathcal{N}_c$ of numbers that are base-$b$ normal but base-$c$ non-normal is $D_2(\boldsymbol{\Pi}_3^0)$-complete.
\end{thm}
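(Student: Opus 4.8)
The containment $\mathcal{N}_b\setminus\mathcal{N}_c\in D_2(\boldsymbol{\Pi}_3^0)$ is immediate from the theorem of Ki and Linton: both $\mathcal{N}_b$ and $\mathcal{N}_c$ are $\boldsymbol{\Pi}_3^0$ (indeed $\boldsymbol{\Pi}_3^0$-complete), so $\mathcal{N}_b\setminus\mathcal{N}_c$ is a difference of two $\boldsymbol{\Pi}_3^0$ sets and hence lies in $D_2(\boldsymbol{\Pi}_3^0)$ by definition. All the work is therefore in the hardness, which I would obtain by a single continuous reduction. Fix a $\boldsymbol{\Pi}_3^0$-complete set $P\subseteq \mathbb{N}^{\mathbb{N}}$ and put $\mathbb{D}=\{(\alpha,\beta)\in \mathbb{N}^{\mathbb{N}}\times\mathbb{N}^{\mathbb{N}} : \alpha\in P \text{ and } \beta\notin P\}$. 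This $\mathbb{D}$ is $D_2(\boldsymbol{\Pi}_3^0)$-complete: it equals $\pi_1^{-1}(P)\cap\pi_2^{-1}(P^c)$, a $\boldsymbol{\Pi}_3^0$ set intersected with a $\boldsymbol{\Sigma}_3^0$ set, and given any $A\setminus B$ with $A,B\in\boldsymbol{\Pi}_3^0$ one reduces it to $\mathbb{D}$ via $z\mapsto(g(z),h(z))$, where $g,h$ witness the $\boldsymbol{\Pi}_3^0$-completeness of $P$ for $A$ and $B$. The plan is then to build a continuous $f(\alpha,\beta)=x\in[0,1)$ whose base-$b$ normality is governed solely by $\alpha$ and whose base-$c$ normality is governed solely by $\beta$, so that $x\in\mathcal{N}_b\iff\alpha\in P$ and $x\in\mathcal{N}_c\iff\beta\in P$. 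Then $f^{-1}(\mathcal{N}_b\setminus\mathcal{N}_c)=\mathbb{D}$, giving the required hardness.

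To construct $f$, I would adapt the block machinery used for Theorem \ref{thm:main}, stripped of its continued-fraction component, since (as the excerpt notes) that proof already produces numbers normal to some bases and non-normal to others. Along a rapidly increasing sequence of scales I interleave two families of segments in the expansion of $x$: base-$b$ control segments and base-$c$ control segments. Inside the base-$b$ control segments I run the standard $\boldsymbol{\Pi}_3^0$-hardness reduction for $\mathcal{N}_b$ driven by $\alpha$, alternating between long equidistributing base-$b$ strings that force every word toward its uniform frequency and long skewing strings that pull frequencies away, arranged so that the empirical base-$b$ frequencies converge to their uniform values if and only if $\alpha\in P$. The base-$c$ control segments run the identical reduction for $\mathcal{N}_c$ driven by $\beta$. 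The segment lengths are chosen so that each family is, along its own subsequence of prefix lengths, the dominant contributor to the corresponding frequency counts.

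The essential point, and the place where $\gcd(b,c)=1$ enters, is a decoupling lemma: the base-$c$ control segments must be equidistributed when read in base $b$, and symmetrically, so that each family leaves the other base's decision untouched and only contributes a negligible, uniform error to the foreign frequencies. This is exactly the phenomenon behind Schmidt's theorem that multiplicatively independent bases have distinct normal sets, and I expect to prove it by the same exponential-sum (Weyl) estimates: because $b$ and $c$ are multiplicatively independent, a $c$-adically structured string equidistributes modulo powers of $b$, with quantitative bounds. The hard part will be making this equidistribution uniform enough that it holds not merely in the limit but at the finite prefix lengths dictated by the interleaving, and balancing the sparsity of the two families so that each base's normality is decided by its native control segments while the foreign segments wash out. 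Granting this lemma, continuity of $f$ is built into the block-by-block definition, the base-$b$ frequencies converge to uniform iff $\alpha\in P$ and the base-$c$ frequencies iff $\beta\in P$, and hence $x\in\mathcal{N}_b\setminus\mathcal{N}_c$ exactly when $(\alpha,\beta)\in\mathbb{D}$, which together with the trivial upper bound completes the proof.
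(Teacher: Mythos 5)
Your upper bound (membership in $D_2(\boldsymbol{\Pi}_3^0)$ via Ki--Linton) and your choice of complete set are both fine, and your overall architecture---a continuous reduction in which interleaved segments control the two bases independently, with coprimality invoked to decouple them---has the same shape as the paper's argument. The genuine gap is the decoupling lemma, which in the pointwise form you state is false. Your base-$c$ skewing segments force the constructed number to lie extremely close to a $c$-adic rational $a/c^M$, so the base-$b$ digits they induce are exactly the base-$b$ digits of that \emph{specific} rational from position roughly $M\log_b c$ onward. No Weyl/exponential-sum estimate gives equidistribution of these digits for a specific $a$: multiplicative independence yields such statements only on average over $a$, and pointwise they can fail outright. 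For instance, with $b=2$, $c=3$, the ternary string $1(0)^{M-1}$ corresponds to $1/3$, whose binary expansion $0.\overline{01}$ is as non-normal as a binary expansion can be. Controlling the base-$b$ digits of a prescribed $c$-adic rational is precisely the kind of multiplicative-order/primitive-root question that forced the earlier construction in \cite{V16} to assume GRH, and it is the reason the present paper never argues pointwise.

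What the paper does instead is build the \emph{selection} into the construction: at each stage it considers all fractions $a/d$, with $d$ a fixed power of $2$ or of $3$, lying in the current cylinder, and proves counting lemmas (Lemma \ref{lem:Copeland-Erdos fraction variant} and Lemma \ref{lem:Copeland-Erdos fraction variant 2}, where coprimality enters through a residue-counting argument rather than exponential sums) showing that all but a vanishing proportion of the candidates have $(\epsilon,k,m)^*$-normal digit strings in the \emph{other} base; conditions (D$'$) and (E$'$) then allow a good candidate to be chosen. Non-normality is produced by truncating the chosen block so that the induced run of repeated digits has length about $(\log_b d)\cdot(z(\cdot)+1)^{-1}$, i.e.\ proportional to the current prefix; this proportional control is what makes normality hold \emph{iff} the relevant coordinate of $z$ tends to infinity, giving both directions of the reduction. (The paper also retains the continued-fraction scaffolding of Theorem \ref{thm:main} to organize all this, which is a structural convenience your direct approach avoids, but the counting lemmas are the essential content---indeed Theorem \ref{thm:extra} exists because no off-the-shelf statement of this kind was available.) To repair your proposal you would need to replace the pointwise decoupling lemma by such an averaged counting statement and restructure each stage so that it selects among many admissible numerators; your interleaving and length bookkeeping could then plausibly go through, but as written the central lemma your proof rests on is unavailable and false in the generality you need.
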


We note again that $\mathcal{N}_b\neq \mathcal{N}_{c}$ if $b^r \neq c^s$ for any positive integers $r,s$. This is a much weaker condition than requiring $b,c$ to be relatively prime. The reason for our different assumption is due to a technicality of our proof. The way we prove base-$b$ non-normality is to have extremely long strings of consecutive $0$'s or $(b-1)$'s inside the base-$b$ expansion---in other words, we want to construct numbers which are extremely well approximated by $b$-adic rationals. While $b$-adic rationals tend to be close to $c$-normal expansions when $b$ and $c$ are relatively prime, this is not the case when $c$ shares a factor in common with $b$. 

A similar phenomenon has been seen before. Originally, Korobov-Stoneham numbers like
\[
\sum_{k=1}^\infty \frac{1}{3^k 2^{3^k}}
\]
were constructed to be base-$2$ normal, but it was later discovered that these numbers are easily seen to not be base-6 normal \cite{BB} (see also the work of Wagner \cite{Wagner}).  Essentially each term $\frac{1}{3^k2^{3^k}}$ has a finite base-6 expansion, but an infinite (and reasonably normal) base-$2$ expansion.

In proving the above results, we also needed to prove a result on continued fraction expansions that may be of independent interest. While normality is a property applied to infinite words, many constructions of normal numbers (such as those here) rely on concatenating finite words and so we want some way to measure how ``normal-like" a finite word is. The typical tool for this is some variant of $(\epsilon,u)$-normality, where a finite word is said to be $(\epsilon,u)$-normal if the word $u$ appears in the string to within $\epsilon$ of the desired frequency $m(u)$. (Fuller definitions will be provided later.) While this concept for base-$b$ expansions was first introduced by Besicovitch \cite{Besicovitch}, it was refined and popularized by Copeland and Erd\H{o}s \cite{CE}. In particular, Copeland and Erd\H{o}s showed that the number of length $k$ strings in base-$b$ that are \emph{not} $(\epsilon,u)$-normal is at most $b^{k(1-\delta)}$, where $\delta$ only depends on $\epsilon$ and the length of $u$. 

To prove the above results we needed a Copeland-Erd\H{o}s-type result for continued fractions. While certain results of this type are known (indeed, this question is closely related to the study of Gauss-Kuzmin statistics), none of the exact type we needed were. Since finite blocks of continued fraction digits are the entire CF expansion of a rational number, one often looks at a set of interesting rationals and asks how many of these fail to be $(\epsilon,u)$-normal for some word of digits $u$. What we needed for the above results was bound on how many rationals in $[0,1)$ \emph{with a fixed denominator} are not $(\epsilon,u)$-normal.

\begin{thm}\label{thm:extra}
Let $u$ be a fixed block of continued fraction digits, let $d$ be a sufficiently large positive integer, and let $\epsilon>0$ be a small positive number. Then the number of rational numbers $a/d$ in $(0,1)$ with $a$ relatively prime to $d$ with a continued fraction expansion that is not $(\epsilon,u)$-normal is at most 
\[
O\left(d^{1-\eta/\log\log d }\right),
\]
for some constant $\eta$ depending only on $\epsilon$ and $u$.
\end{thm}

In addition to being compared to Copeland and Erd\H{o}s, this theorem may be considered a complimentary result to the work of Bykovskii-Frolenkov \cite{BF} and Ustinov \cite{Ustinov}. Those results could be used to give the relative frequency of the word $u$ averaged over all rational numbers $a/d$ to a high degree of accuracy; however, they leave open the possibility that most fractions $a/d$ see the word $u$ with a frequency much higher or lower than the expected frequency. In one sense, then, Theorem \ref{thm:extra} may be seen as a loose bound on the variance.

\subsection{Strings, blocks, and words}

For any set $\mathcal{D}$, we let $\mathcal{D}^*=\bigcup_{k=1}^\infty \mathcal{D}^k$ denote the set of all finite sequences on $\mathcal{D}$. These sequences are generally known as strings, words, or blocks. Because we are working in several different systems simultaneously, from this point forward, we will use the varying terminologies to help us understand which setting we are in. We will use string (with variables like $s,t$) to denote finite sequences of base-$b$ digits. We will use block (with variables like $A,B$) to denote finite sequences of CF digits. And we will use word (with variables like $u,v,w$) if what we are discussing could consist of either base-$b$ digits or CF digits. We will denote collections of words, blocks, or strings by using calligraphic letters.

If $w\in \mathcal{D}^k$, then we say that the length of $w$ is $k$, and this is denoted by $|w|=k$. The empty word (which has length $0$) is denoted $\wedge$.

If $u=a_1a_2\dots a_n$ and $v=c_1c_2\dots c_m$, then we denote concatenation by $uv= a_1\dots a_n c_1\dots c_m$. We will say that $u$ is a prefix of $v$ if there exists another (possibly empty) word $w$ such that $uw=v$. Likewise, $u$ is a suffix of $v$ if there exists a (possibly empty) $w$ such that $wu=v$.

When we wish to concatenate a word with itself we will use exponents. So $w^k$ means the concatenation of $k$ copies of $w$. If we are repeatedly concatenating the same digit, we will use parentheses to avoid ambiguity: $(2)^k$ means a word consisting of $k$ repetitions of the digit $2$, whereas $2^k$ indicates the number $2$ raised to the power $k$.

Note that for any base $b\ge 2$, we can write a positive integer in $[b^k,b^{k+1})$ as a $k$-length string of base-$b$ digits in the usual way. In fact, any integer in $[0,b^{k+1})$ can be written as a string of base-$b$ digits of any length that is at least $k$ by prepending enough $0$'s, and we will make sure the intended length is made clear by context. As such we will often go back and forth between treating positive integers as strings or as integers as the situation requires.

\section{An outline of the proof of Theorem \ref{thm:main}}\label{sec:outline}

As the proof of the main theorem is quite intricate, we provide an outline of it here.

To begin with, we will describe in general how one may construct an example of a CF-normal but absolutely abnormal number. Suppose that $a_1a_2a_3\dots$ is the CF expansion for some real number $x\in [0,1)$. The rational numbers whose finite CF expansion is a truncation of the expansion for $x$ are known as the convergents of $x$. Suppose that a convergent of $x$ is a $b$-adic rational  $\frac{c}{b^k}$ for some integers $c$ and $k$. By the well-known relation of a number to its CF convergents, we have that
\[
\left| x- \frac{c}{b^k} \right| \le \frac{1}{b^{2k}}.
\]
Thus, since $\frac{c}{b^k}$ has only $k$ base-$b$ digits (when written with terminating $0$'s) we have that from roughly the $(k+1)$st base-$b$ digit to the $(2k)$th base-$b$ digit of $x$, we should see either a long string consisting of just the digit $0$ or just the digit $(b-1)$. In particular, the first $2k$ base-$b$ digits of $x$ look very far from normal. If infinitely many convergents of $x$ are $b$-adic rationals, then $x$ cannot be base-$b$ normal. A similar idea was used by Martin \cite{Martin} to construct an explicit example of an irrational, absolutely abnormal number.

In \cite{V16}, the third author constructed CF-normal, absolutely abnormal numbers by starting with a known CF-normal number $x$ and varying its digits slightly to cause it to have $b$-adic rational convergents infinitely often. The construction was conditional because in order to show that one could reach a $b$-adic rational convergent with a small number of changes, one had to know that there were primes in an arithmetic progression with a prescribed primitive root, which necessitated assuming GRH. 

However, we need a different method of construction to show that $\NCF\setminus \bigcup_{b\ge 2} \mathcal{N}_2$ is  $D_2(\boldsymbol{\Pi}_3^0)$-hard. We will use the technique of Wadge reduction. Namely, let $C= \{z\in \omega^\omega: z(2n+1)\to \infty\}$ and $D=\{z\in \omega^\omega: z(2n)\to \infty\}$, then $C\setminus D$ is known to be $D_2(\boldsymbol{\Pi}_3^0)$-complete. To prove that $\NCF\setminus \bigcup_{b\ge 2} \mathcal{N}_2$ is  $D_2(\boldsymbol{\Pi}_3^0)$-hard it suffices to construct a continuous map $\phi:\omega^\omega\to [0,1)$ such that $\phi^{-1}(\NCF\setminus \bigcup_{b\ge 2} \mathcal{N}_2)=C\setminus D$.

We will construct the function $\phi$ by considering an arbitrary element $z\in \omega^\omega$ and building the CF expansion of $\phi(z)$ iteratively. We consider $\Delta= \{(i,j):i\in\mathbb{N}, 1\le j \le i\}$ to be a sequence ordered lexicographically. For each $(i,j)\in \Delta$ we will construct a block $\overline{B}_{i,j}$, which will be concatenated in order to produce the CF expansion of $\phi(z)$:
\[
\overline{B}_{1,1}\overline{B}_{2,1}\overline{B}_{2,2}\overline{B}_{3,1}\overline{B}_{3,2}\overline{B}_{3,3}\dots.
\]
A given block $\overline{B}_{i,j}$ will depend only on the prior blocks, as well as $z(2i)$ and $z(2i-1)$. In this way, if $z$ and $z'$ have the same prefix, we see that the corresponding CF expansions of $\phi(z)$ and $\phi(z')$ have the same prefix as well, and thus $\phi$ is continuous. 

When $j\ge 2$, the block $\overline{B}_{i,j}$ will be constructed so that $\overline{B}_{1,1}\overline{B}_{2,1}\dots \overline{B}_{i,j}$ contains the CF expansion of a $p_j$-adic rational, where $p_j$ is the $j$th prime number, and the next CF digit is extremely large. In a refinement of the idea from \cite{V16}, we show that this is sufficient to not only obtain a large string of $0$'s or $(p_j-1)$'s in the base-$p_j$ expansion, but also a large string of $0$'s or $(bp_j-1)$'s in the base-$bp_j$ expansion for $b\ge 2$. Since any $j$ repeats infinitely often in $\Delta$, this would immediately tell us that $\phi(z)$ is not normal to any base with an odd prime factor. Since base-$2$ normality is equivalent to base-$2^k$ normality for any $k\ge 1$, we thus need only focus on CF normality and base-$2$ normality.

To control CF normality, we will consider a sequence of positive reals $(\epsilon_{i,j})_{(i,j)\in\Delta}$ that descends to zero, a sequence of collections of finite blocks $(\mathcal{A}_{i,j})_{(i,j)\in \Delta}$ such that each collection is a subset of the next collection and eventually any finite block can be found in one of the collections, and a sequence of positive integers $(n_{i,j})_{(i,j)\in\Delta}$. We would like to have our concatenated block $\overline{B}_{i,j}$ be close to normal in the $(\epsilon_{i,j},\mathcal{A}_{i,j},n_{i,j})$-normal sense---that is, any block $A\in\mathcal{A}_{i,j}$ appears in any prefix of $\overline{B}_{i,j}$ that is a multiple of $n_{i,j}$ to within $\epsilon_{i,j}$ of $\mu(C_A)$, where $C_A$ is the set of $x$ whose CF expansion start with $A$. So each new block in the concatenation should look more and more normal for more and more strings. While concepts similar to $(\epsilon_{i,j},\mathcal{A}_{i,j})$-normality are common, we make use of $(\epsilon_{i,j},\mathcal{A}_{i,j},n_{i,j})$-normality because in our construction, the length of $\overline{B}_{i,j}$ will often vastly exceed that of prior blocks in the concatenation. If we only asked for $(\epsilon_{i,j},\mathcal{A}_{i,j})$-normality, there could be some prefix of $\overline{B}_{i,j}$ that is very far form $(\epsilon_{i,j},\mathcal{A}_{i,j})$-normal and this behavior could dominate the behavior of the preceding blocks in the concatenation. In practice, we will construct our blocks $\overline{B}_{i,j}$ to consist of several $(\epsilon_{i,j},\mathcal{A}_{i,j},n_{i,j})$-normal blocks concatenated with a long string of $1$'s, where the length of this string of $1$'s depends inversely on the size of $z(2i-1)$. As a sufficiently long string of $1$'s in the CF expansion would disrupt CF normality, we guarantee that $\phi(z)\in\NCF$ if and only if $z\in C$.

To balance the needs of the previous two paragraphs, we must be able to find a block which is the CF expansion of a $p$-adic rational while maintaining good CF normality properties. In constructing $\overline{B}_{i,j}$ we select a $d$ that is a very large power of $p_j$ and look at all fractions $a/d$ whose CF expansion is prefixed by $\tilde{B}_{i,j}:=\overline{B}_{1,1}\overline{B}_{2,1}\dots, \overline{B}_{i,j-1}$. Using an idea of Avdeeva and Bykovskii \cite{AB}, we show that if $\tilde{B}_{i,j}B$ is the CF expansion of $a/d$, then $\tilde{B}_{i,j}B$ can \emph{almost} be written as the concatenation of two blocks $B'$ and $(B'')^*$ where $B'$ and the reversal of $(B'')^*$ are the CF expansions of fractions with denominator \emph{at most} $\sqrt{d}$. While there has not been much study of CF expansions of rationals with a fixed denominator, there are many results about the CF expansions of rationals with a bounded denominator; and we, in particular, use a result of Scheerer \cite{Scheerer}, to show that for most choices of $a/d$, both $B'$ and $(B'')^*$ have very good CF normality properties in the sense of being $(\epsilon_{i,j},\mathcal{A}_{i,j},n_{i,j})$-normal. On top of these conditions, we also don't want these blocks, where we are breaking base-$p_j$ normality, to force us to also break base-$2$ normality. So we also choose our $B$ so that the binary expansion of $a/d$ is fairly close to normal as well. It is for this reason that we have been looking at prime bases rather than arbitrary bases: a $3$-adic rational is likely to have a good binary expansion, but a $6$-adic rational may not. Once we have a good choice of $B$, we can get $\overline{B}_{i,j}$ from it by appending a large digit (to guarantee that we break not only base $p_j$ normality, but any multiple of $p_j$-normality) and then the string of $1$'s as mentioned above.

The blocks $\overline{B}_{i,j}$ when $j=1$ correspond to the times when we exert control over base-$2$ normality. Here, we need to use a modification of the above scheme. If we chose $B$ so that $\tilde{B}_{i,j}B$ is the CF expansion of some $2$-adic rational $\frac{c}{2^k}$, then any CF expansion with this prefix would have a binary expansion that had only $0$'s or only $1$'s from roughly its $k$th to $2k$th digits, and this would force $\phi(z)$ to be base-$2$ non-normal always. So instead of constructing $\overline{B}_{i,j}$ from the entirety of such a $B$, we instead construct it from some prefix of $B$, chosen so that it only has a ``bad" binary expansion from the $(k+1)$th to $(k+k')$th digits, where $k'$ is chosen based on $z(2i)$ and in this way allow us to base whether $\phi(z)$ is base-$2$ normal on whether $z\in D$ or not. However, in order to know that we can select a prefix of $B$ which affects a very precise amount of the binary expansion of $\phi(z)$, we must impose a further condition on $B$ that functionally says that it does not have too many abnormally large digits.

One further point to make about the above outline: in general, much of the construction is about preserving CF-normality or base-$2$ normality at each stage of the construction except at points where we very deliberately break normality in a big way. The exception to this is the string of $1$'s appended at the end of the $\overline{B}_{i,j}$'s to control CF normality. If this string is short, then it has a negligible impact on both CF-normality and base-$2$ normality. However, if this string is quite long, then it breaks CF normality while having an unknown effect on base-$2$ normality. This unknown effect is fine for the purposes of this proof, but is part of the reason why we cannot easily prove facts about $\mathcal{N}_2\setminus \NCF$.

\section{Preliminaries}

\subsection{Note on asymptotic notations}

We will make use of standard asymptotic notations. By $f(x)=O(g(x))$ we mean that there exists some constant $C$ such that $|f(x)|\le C|g(x)|$. By $f(x) \asymp g(x)$ we mean that there exists some constants $c,C$ such that $c|g(x)|\le |f(x)|\le C|g(x)|$. The constants $c,C$ are said to be the implicit constants.

\subsection{Facts about base-$b$ expansions}

The following lemma will allow us to describe the extent to which  an interval, usually defined by a finite CF expansion, also defines a corresponding base-$b$ expansion.

\begin{lem}\label{lem:base-b expansion of I}
Let $b\ge 2$ be an integer base and let $I\subset[0,1)$ be an interval. Letting $k=-\lceil \log_b \lambda(I)\rceil$, there exists $c\in \mathbb{N}_{\ge 0}$ such that 
\[
 I \subset \left[ \frac{c}{b^k}, \frac{c+2}{b^k}\right).
\]
\end{lem}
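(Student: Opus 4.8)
The plan is to translate the definition of $k$ into an upper bound on the length of $I$, and then to locate $I$ inside at most two of the natural base-$b$ cells at depth $k$.

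First I would unwind the ceiling to extract the length bound. Since $I\subset[0,1)$ we have $\lambda(I)\le 1$ and hence $\log_b \lambda(I)\le 0$, so $k=-\lceil \log_b \lambda(I)\rceil\ge 0$. The defining property of the ceiling, $-k-1 < \log_b\lambda(I)\le -k$, exponentiates to $b^{-k-1}<\lambda(I)\le b^{-k}$; of these only the upper bound $\lambda(I)\le b^{-k}$ will be needed. (I assume here that $I$ is nondegenerate, so that $\lambda(I)>0$ and $k$ is finite; for a single point or the empty interval the conclusion is trivial.)

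Next I would choose $c$. Writing $\alpha=\inf I$, I would set $c=\lfloor \alpha b^k\rfloor$. Because $I\subset[0,1)$ we have $\alpha\ge 0$, so $c\in\mathbb{N}_{\ge 0}$, and by definition of the floor,
\[
\frac{c}{b^k}\le \alpha < \frac{c+1}{b^k}.
\]
Finally I would verify the two-sided containment. For the left endpoint, every $x\in I$ satisfies $x\ge \inf I=\alpha\ge c/b^k$. For the right endpoint, every $x\in I$ satisfies $x\le \sup I\le \alpha+\lambda(I)$, and combining $\alpha<(c+1)/b^k$ with $\lambda(I)\le b^{-k}$ gives
\[
x\le \alpha+\lambda(I) < \frac{c+1}{b^k}+\frac{1}{b^k}=\frac{c+2}{b^k}.
\]
Thus $x\in[c/b^k,(c+2)/b^k)$ for every $x\in I$, which is the desired inclusion.

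There is no substantial obstacle in this argument: the content is simply that an interval of length at most $b^{-k}$ can overlap at most two consecutive cells $[c/b^k,(c+1)/b^k)$ and so lies in the union of two of them. The only points demanding a little care are the direction of the ceiling inequality—so that one obtains $k\ge 0$ and $\lambda(I)\le b^{-k}$ rather than the reverse—and the treatment of the half-open right endpoint, both of which are accommodated by the strict inequalities used above.
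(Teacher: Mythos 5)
Your proof is correct and follows essentially the same approach as the paper: both arguments reduce to the observation that the ceiling in the definition of $k$ gives $\lambda(I)\le b^{-k}$, so $I$ meets at most two adjacent cells of the form $[c/b^k,(c+1)/b^k)$. Your version simply makes explicit the choice $c=\lfloor (\inf I)\, b^k\rfloor$ and the endpoint inequalities that the paper leaves to the reader.
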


\begin{proof}
Note that the definition of $k$ implies that $\lambda(I) \le b^{-k}$, so either $I$ is contained in a single interval of the form $[\frac{a}{b^k}, \frac{a+1}{b^k})$, or it is contained in the union of two adjacent such intervals. This proves the existence of $c$.
\end{proof}

Since the above result will be used frequently in many contexts in this paper, we will give some more notation. First, we let $L_b(I)=-\lceil \log_b \lambda(I)\rceil$. This tells us roughly how many base-$b$ digits are predetermined if we know a point $x$ belongs to $I$. Secondly, we will refer to $c$ by $S_b(I)$. This tells us roughly what base-$b$ digits are predetermined. We say roughly in both places because of possibilities such as $I$ being a very small interval that straddles $1/2$, so that on one side, it contains points of the form $0.011111\dots 1$ in base-$2$, and on the other side it contains points of the form $0.100\dots 0$ in base-$2$; however, in this paper, any such interval we consider will be assumed to have reasonably good base-$b$ normality properties, so while this behavior could happen at the end of $S_b(I)$, the number of digits this will alter will be negligible compared to $L_b(I)$. 

\begin{rem}\label{rem:base-b digit variants}
If $x\in I$, as in Lemma \ref{lem:base-b expansion of I}, then the first $L_b(I)$ base-$b$ digits of $x$ are either $S_b(I)$ or $S_b(I)+1$. If $x$ is actually equal to $c/b^k$ in Lemma \ref{lem:base-b expansion of I}, then there is a way to write $x$ such that the first $L_b(I)$ base-$b$ digits of $x$ are $S_b(I)-1$, but this will not be relevant in most parts of the paper.
\end{rem}

\subsection{Facts about continued fractions}

The continued fraction expansion of a number $x\in [0,1)$ is a way of writing $x$ in one of the two following ways:
\[ 
x= \cfrac{1}{a_1+\cfrac{1}{a_2+\cfrac{1}{a_3+\dots}}}, \text{ if }x\in[0,1)\setminus \mathbb{Q}
\]
or
\[
x= \cfrac{1}{a_1+\cfrac{1}{a_2+\cfrac{1}{a_3+\dots+\cfrac{1}{a_k}}}}, \text{ if } x\in [0,1)\cap \mathbb{Q}
\]
where each $a_i$ as well as $k$ is a positive integer. The block $a_1a_2a_3\dots $ (or $a_1a_2a_3\dots a_k$ in the second case) is what we will generally be referring to as the CF expansion of a number.

If $B\in\mathbb{N}^*$ is a finite-length block, we will let $r_B$ denote the rational number whose CF expansion is $B$. The map $r_\cdot: \mathbb{N}^*\to \mathbb{Q}\cap (0,1]$ is a two-to-one map everywhere with a single exception\footnote{The exception is at $1$, because there is only one finite block $B=1$ that has $r_B=1$.}. This is because if $B=a_1a_2 \dots a_n$ with $a_n>1$ and $B'=a_1a_2\dots (a_n-1)1$, then $r_B=r_{B'}$. In contrast, the map from infinite sequences to irrational numbers is a bijection.

We will sometimes use an alternate way of writing continued fractions. If $B=a_1a_2\dots a_n$. Then we will sometimes write $r_B$ as $[a_1,a_2,\dots, a_n]$.

Given a (possibly infinite) block $B\in\mathbb{N}^*\cup \mathbb{N}^\omega$, we will let $\frac{p_n}{q_n}=\frac{p_n(B)}{q_n(B)}$ denote the fraction $r_{a_1a_2\dots a_n}$ in lowest terms. (If $B$ is finite, we let $p(B)$ and $q(B)$ denote $p_{|B|}(B)$ and $q_{|B|}(B)$, respectively.)  These $p_n$'s and $q_n$'s obey the following recurrence relation:
\[
p_{n+1} = a_{n+1} p_n + p_{n-1} \qquad q_{n+1}= a_{n+1}q_n+q_{n-1}, \qquad n\ge 0,
\] where we have implicitly defined
\begin{align*}
    p_{-1} &=1 & p_0&= 0\\
    q_{-1}&= 0 & q_0 &= 1.
\end{align*}
For consistency, we will define $r_\wedge=\frac{0}{1}$.

In particular, since $a_i\ge 1$ for all $i$, we have that
\[
q_{n+2}\ge q_{n+1}+q_n \ge (q_n+q_{n-1})+q_n \ge 2q_n, \qquad n\ge 0,
\]
and, so
\begin{equation}\label{eq:qn relation}
    q_{n+2}\ge 2q_n, \qquad n\ge 0.
\end{equation} 
Moreover, this inequality is strict provided $n\ge 1$.

The above recurrence relations and initial conditions also imply the following matrix relation quite easily:
\begin{equation}
\left( \begin{array}{cc} p_{n-1} & p_n \\ q_{n-1} & q_n \end{array} \right) = \left( \begin{array}{cc} 0 & 1 \\ 1 & a_1 \end{array} \right) \left( \begin{array}{cc} 0 & 1 \\ 1 & a_2 \end{array} \right) \dots \left( \begin{array}{cc} 0 & 1 \\ 1 & a_n \end{array} \right) . \label{eq:CF matrix relation}
\end{equation}And then this matrix relation implies that 
\begin{equation}
p_{n-1}q_n-p_nq_{n-1} = (-1)^n \label{eq:CF determinant relation}
\end{equation}
by taking determinants.

Suppose $B=a_1a_2\dots a_n$. Let $B^*=a_na_{n-1}\dots a_1$ denote its reversal. Then by applying the transpose to the matrix relation in \eqref{eq:CF matrix relation}, we have that if
\[
r_B = \frac{p}{q} \text{ in lowest terms, then }r_{B^*} = \frac{p^*}{q}
\]
where, by \eqref{eq:CF determinant relation}, we see that $p^*$ is the unique integer in $[1,q]$ satisfying
\[
pp^*+(-1)^{|B|}\equiv 0 \pmod{q}.
\]

We will let $r_B(m)$ denote $r_{B'}$ where $B'$ is the longest prefix of $B$ with $q(B')\le m$. In other words, this is the last convergent whose denominator does not exceed $m$. If $a_1(B)>m$, then the longest such prefix is the empty word $\wedge$, so that $r_B(m)=\frac{0}{1}$ in this case.

The following lemma will be crucial. It will allow us to compare all CF expansions with a fixed denominator $d$ with CF expansions of numbers with denominator \emph{at most} $m\approx \sqrt{d}$.

\begin{lem}\label{lem:splitter}
Consider $B\in \mathbb{N}^*$. Suppose we have integers $b\ge 2$, $m\ge 1$ satisfying
\[
\frac{q(B)}{b} \le m^2 < q(B).
\]
Let $B_1,B_2$ be a prefix and suffix of $B$ (respectively) so that $r_{B_1}=r_B(m)$ and $r_{B_2^*}=r_{B^*}(m)$. Then \[-4\left\lceil \frac{1}{2} \log_2 b \right\rceil -5 \le 
|B_1|+|B_2|-|B|\le 1.
\]
\end{lem}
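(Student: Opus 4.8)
The plan is to translate everything into continuant denominators and then play the lower bound $q(B)>m^2$ against the upper bound $q(B)\le bm^2$. Write $B=a_1a_2\cdots a_n$, abbreviate $q_k=q_k(B)$ and write $q_k(B^*)$ for the convergent denominators of the reversed block. Let $s=|B_1|$ and $t=|B_2|=|B_2^*|$; by definition $B_1$ is the longest prefix of $B$ with denominator at most $m$ and $B_2^*$ the longest prefix of $B^*$ with denominator at most $m$, so that $q_s\le m<q_{s+1}$ and $q_t(B^*)\le m<q_{t+1}(B^*)$. Since $m\ge 1$ forces $m\le m^2<q(B)=q_n$, both $s$ and $t$ are strictly below $n$, so these inequalities make sense, and the hypothesis $q(B)/b\le m^2<q(B)$ records the two bounds I will use, namely $m^2<q_n\le bm^2$. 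The one structural input is a splitting identity: reading the bottom-right entry of the factorization \eqref{eq:CF matrix relation} after cutting $B$ at an index $k$, and using that the length-$(n-k)$ prefix of $B^*$ is exactly the reversal of the suffix $D=a_{k+1}\cdots a_n$ and hence shares its denominator, gives
\[
q_n = q_k\,q_{n-k}(B^*) + q_{k-1}\,p(D), \qquad q_{n-k}(B^*)=q(D),\quad 0\le p(D)\le q(D).
\]

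For the upper bound $|B_1|+|B_2|-|B|\le 1$, I apply this at $k=s$ and bound crudely, using $q_{s-1}\le q_s$ and $p(D)\le q(D)=q_{n-s}(B^*)$, to get $q_n\le 2\,q_s\,q_{n-s}(B^*)$. Suppose toward a contradiction that $s+t\ge n+2$. Then $n-s\le t-2$ (and, since $s\le n-1$, necessarily $t\ge 3$, so all indices below are legitimate), and monotonicity of denominators together with \eqref{eq:qn relation} gives $q_{n-s}(B^*)\le q_{t-2}(B^*)\le \tfrac12 q_t(B^*)\le m/2$. Hence $q_n\le 2q_s(m/2)=q_s m\le m^2$, contradicting $q_n>m^2$. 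Thus $s+t\le n+1$.

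For the lower bound I apply the identity at $k=s+1$ and discard the nonnegative second term, obtaining $q_n\ge q_{s+1}\,q_{n-s-1}(B^*)>m\,q_{n-s-1}(B^*)$. If $n-s-t\le 1$ there is nothing to prove, so assume $n-s-t\ge 2$ and write $n-s-1=(t+1)+r$ with $r\ge 0$. Iterating \eqref{eq:qn relation} from index $t+1$ gives $q_{n-s-1}(B^*)\ge 2^{\lfloor r/2\rfloor}q_{t+1}(B^*)>2^{\lfloor r/2\rfloor}m$, so $q_n>2^{\lfloor r/2\rfloor}m^2$. Comparing with $q_n\le bm^2$ yields $2^{\lfloor r/2\rfloor}<b$, hence $\lfloor r/2\rfloor\le \lceil\log_2 b\rceil-1$ and therefore $n-s-t=r+2\le 2\lceil\log_2 b\rceil+1$. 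Since $\lceil x\rceil\le 2\lceil x/2\rceil$ for every real $x$, we have $2\lceil\log_2 b\rceil+1\le 4\lceil\tfrac12\log_2 b\rceil+5$, so this is comfortably within the claimed bound $|B_1|+|B_2|-|B|\ge -4\lceil\tfrac12\log_2 b\rceil-5$.

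The main obstacle is bookkeeping rather than any hard estimate: I must get the index shifts in the splitting identity exactly right, and in particular verify the reversal dictionary that matches convergent denominators of $B^*$ with the denominators of reversed suffixes of $B$ — this is what lets the two one-sided quantities $s$ and $t$ interact. I also need to check that \eqref{eq:qn relation} is applied only at nonnegative indices in each case (which is why the contradiction hypothesis $s+t\ge n+2$ is first turned into $t\ge 3$), and to keep in mind that the upper-bound argument never assumes $B_1$ and $B_2$ are disjoint as subwords of $B$; it is precisely the possibility of a one-symbol overlap that accounts for the $+1$ on the right-hand side.
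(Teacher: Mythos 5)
Your proof is correct, and it takes a genuinely different route from the paper's. Both arguments rest on the same two ingredients---the splitting identity $q(B)=q_k(B)\,q_{n-k}(B^*)+q_{k-1}(B)\,q_{n-k-1}(B^*)$ (your $p(D)$ equals $q_{n-k-1}(B^*)$, since the numerator of $a_{k+1}\cdots a_n$ is the continuant of $a_{k+2}\cdots a_n$) and the doubling inequality \eqref{eq:qn relation}---but the paper deploys them \emph{globally}: it counts, over all indices $i\in[0,n]$, how many satisfy $q_i(B)\le \sqrt{q(B)}$ and how many satisfy $q_{n-i}(B^*)\le\sqrt{q(B)}$, shows every index lands in at least one set and at most two indices land in both, and then passes from the threshold $\sqrt{q(B)}$ to the threshold $m$ at the cost of at most $2\lceil \tfrac12\log_2 b\rceil+2$ exceptions per side. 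You instead argue \emph{locally} at the cut indices: evaluating the identity at $k=s$ gives the upper bound by contradiction (if $s+t\ge n+2$ then $q(B)\le 2q_s q_{t-2}(B^*)\le q_s m\le m^2$), and evaluating it at $k=s+1$, discarding the second term, and iterating the doubling from index $t+1$ gives $2^{\lfloor r/2\rfloor}m^2<q(B)\le bm^2$, hence the lower bound. Your bookkeeping is sound (the index-range checks, the $t\ge 3$ reduction, and the treatment of $q_{-1}=0$ when $s=0$ all hold), and your method actually yields the sharper estimate $|B|-|B_1|-|B_2|\le 2\lceil\log_2 b\rceil+1$, which is roughly twice as good as the stated $4\lceil\tfrac12\log_2 b\rceil+5$; the paper's counting argument, by contrast, generalizes more readily if one wants to control \emph{all} indices where the denominators cross the threshold, not just the two cut points, but for the lemma as stated your direct argument is leaner.
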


\begin{proof}
The upper bound is a slight refinement of a result of Bykovskii \cite[Lemma 2]{Bykovskii}, and we will largely follow their method.

We make use of the following well-known fact:
\[
q_{i}(B)q_{n-i}(B^*)+ q_{i-1}(B)q_{n-i-1}(B^*)=q(B), \text{ for }0\le i \le n.
\]
In particular, it is true for all $i\in [0,n]$ that we have
\[
q_{i}(B)<\sqrt{q(B)} \text{ or } q_{n-i}(B^*) < \sqrt{q(B)}.
\]
The number of $i$ for which both of these inequalities are true (rather than just one) is at most $2$. This follows because if both inequalities are satisfied then we have that
\begin{align*}
q(B)&=q_{i}(B)q_{n-i}(B^*)+ q_{i-1}(B)q_{n-i-1}(B^*)\\
&\le 2q_{i}(B)q_{n-i}(B^*)\\
&< 2q_{i}(B) \cdot \sqrt{q(B)}.
\end{align*}
Rearranging gives us the lower bound of
\begin{equation}\label{eq:qi+1s bound}
\frac{1}{2} \sqrt{q(B)} < q_{i}(B) < \sqrt{q(B)}.
\end{equation}
Since the value of $q_{i}(B)$ must at least double in size every time $i$ is increased by $2$ (see \eqref{eq:qn relation}), we have that there are at most $2$ solutions to \eqref{eq:qi+1s bound}.

By construction we have that
\[
|B_1|+|B_2|=\#\{i\in [1,n]:q_i(B)\le m\}+ \#\{i\in[1,n]: q_{i}(B^*)\le m\}.
\]
And so
\begin{align*}
&|B_1|+|B_2|+2\\
&\qquad=\#\{i\in [0,n]:q_i(B)\le m\}+ \#\{i\in[0,n]: q_{i}(B^*)\le m\}\\
&\qquad\le \#\{i\in [0,n]:q_i(B)\le \sqrt{q(B)}\}+ \#\{i\in[0,n]: q_{i}(B^*)\le \sqrt{q(B)}\}\\
&\qquad= \#\{i\in [0,n]:q_i(B)\le \sqrt{q(B)}\}+ \#\{i\in[0,n]: q_{n-i}(B^*)\le \sqrt{q(B)}\}\\
&\qquad \le |B|+1+2.
\end{align*}
The last inequality comes from the fact that every $i\in[0,n]$ belongs to either the first or second set, and at most $2$ belong to both. This gives the upper bound.

Now let $k=\lceil \frac{1}{2}\log_2 b \rceil+1$ so that
\[
\frac{\sqrt{q(B)}}{2^k} < \frac{\sqrt{q(B)}}{\sqrt{b}}. 
\] Then every $i$ such that $q_i(B)< \sqrt{q(B)}$ must also satisfy $q_i(B)\le m$ with at most $2k$ exceptions. This again follows from the equation \eqref{eq:qn relation} that shows that $q_i(B)$ must at least double every time $i$ increases by $2$. And similarly there are at most $2k$ exceptions for $q_{n-i}(B^*)<\sqrt{q(B)}$ implying $q_{n-i}(B^*)\le m$. Therefore, we have that \begin{align*}
    &|B_1|+|B_2|+2\\
&\qquad=\#\{i\in [0,n]:q_i(B)\le m\}+ \#\{i\in[0,n]: q_{i}(B^*)\le m\}\\
&\qquad \ge \#\{i\in [0,n]:q_i(B)\le \sqrt{q(B)}\}+ \#\{i\in[0,n]: q_{i}(B^*)\le \sqrt{q(B)}\}-4k\\
&\qquad = \#\{i\in [0,n]:q_i(B)\le \sqrt{q(B)}\}+ \#\{i\in[0,n]: q_{n-i}(B^*)\le \sqrt{q(B)}\}-4k\\
&\qquad \ge |B|+1-4k.
\end{align*}
Here, the last inequality comes from the fact that every $i$ must contribute to either the first or second set. This gives the lower bound.
\end{proof}

Given a block $B\in\mathbb{N}^*$ we will let $C_B\subset[0,1)$ denote the corresponding cylinder set, the set of all $x\in[0,1)$ whose continued fraction expansion is prefixed by $B$. The rank of a cylinder $C_B$ is the length of the corresponding block $B$. Suppose  $|B|>1$ and we let $B'$ denote the prefix of $B$ with $|B|-1$ elements. Then $C_B$ is a clopen interval whose endpoints are
\[
\frac{p(B)}{q(B)} \text{ and } \frac{p(B')+p(B)}{q(B')+q(B)}.
\]
Since $p(B')q(B)-p(B)q(B')=(-1)^{|B|}$, this implies that
\begin{equation}
\lambda(C_B) = \frac{1}{(q(B')+q(B))q(B)}. \label{eq:Cs lebesgue measure}
\end{equation}

This immediately implies the following important result:

\begin{prop}\label{prop:Lebesgue to denominator comparison}
For any block $B\in\mathbb{N}^*$, we have that
\[
\lambda(C_B) \asymp q(B)^{-2}.
\]
\end{prop}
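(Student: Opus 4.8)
The plan is to read off the result directly from the explicit formula for $\lambda(C_B)$ in \eqref{eq:Cs lebesgue measure}, together with the monotonicity of the denominator sequence $q_n$. Recall that for $|B|>1$, writing $B'$ for the prefix of $B$ of length $|B|-1$, equation \eqref{eq:Cs lebesgue measure} gives
\[
\lambda(C_B) = \frac{1}{(q(B')+q(B))\, q(B)}.
\]
Thus it suffices to show that the factor $q(B')+q(B)$ is comparable to $q(B)$, since then $(q(B')+q(B))q(B) \asymp q(B)^2$ and the claim follows upon taking reciprocals.

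First I would establish that the denominators are non-decreasing: from the recurrence $q_{n+1}=a_{n+1}q_n+q_{n-1}$ with every $a_{n+1}\ge 1$ and $q_{n-1}\ge 0$, we obtain $q_{n+1}\ge q_n$, so in particular $q(B')=q_{|B|-1}(B)\le q_{|B|}(B)=q(B)$. Since also $q(B')\ge 1$, this sandwiches the relevant factor: $q(B)\le q(B')+q(B)\le 2q(B)$. Substituting into the displayed formula yields
\[
\tfrac{1}{2}\,q(B)^{-2} \le \lambda(C_B) \le q(B)^{-2},
\]
which is exactly $\lambda(C_B)\asymp q(B)^{-2}$ with implicit constants $\tfrac{1}{2}$ and $1$.

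The only point requiring separate attention is the edge case $|B|=1$, where the formula \eqref{eq:Cs lebesgue measure} was derived under the hypothesis $|B|>1$. Here I would adopt the convention $q(\wedge)=q_0=1$, under which the same formula continues to hold and gives $\lambda(C_B)=\frac{1}{a_1(a_1+1)}$ for $B=a_1$; since $q(B)=q_1=a_1$ and $a_1^2\le a_1(a_1+1)\le 2a_1^2$, the same comparison applies. I do not anticipate any genuine obstacle in this proof: it is a direct computation, and the only care needed is in confirming the monotonicity of $q_n$ and in treating the length-one block, both of which are immediate from the initial conditions and the recurrence relation.
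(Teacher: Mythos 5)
Your proof is correct and follows essentially the same route as the paper: both apply the inequality $q(B)\le q(B')+q(B)\le 2q(B)$ directly to the formula \eqref{eq:Cs lebesgue measure}. Your additional verifications (the monotonicity of $q_n$ from the recurrence, and the length-one case via the convention $q(\wedge)=1$) are correct details that the paper leaves implicit.
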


\begin{proof}
This follows by simply applying the inequality
\[
q(B)\le q(B')+q(B) \le 2q(B)
\]
to \eqref{eq:Cs lebesgue measure}.
\end{proof}

The following lemma gives a lower bound on how many base-$b$ digits are determined by a given number of CF digits.

\begin{lem}\label{lem:CF to base b digits}
For any base $b\ge 2$ and any finite block $B\in \mathbb{N}^*$ of CF digits, we have that
\[
L_b(C_B) \ge 2|B| \log_b(\phi)+O(1)
\]
where $\phi=(1+\sqrt{5})/2$
\end{lem}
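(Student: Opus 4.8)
The plan is to reduce everything to two facts already in hand: the comparison $\lambda(C_B)\asymp q(B)^{-2}$ from Proposition \ref{prop:Lebesgue to denominator comparison}, and the minimal (Fibonacci) growth rate of the denominators $q_n$. The definition $L_b(I)=-\lceil \log_b\lambda(I)\rceil$ means that estimating $L_b(C_B)$ is, up to the rounding error from the ceiling, the same as estimating $-\log_b\lambda(C_B)$.

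First I would unwind the definition and feed in the proposition. Since $\lambda(C_B)\asymp q(B)^{-2}$, there are absolute constants $c,C$ with $c\,q(B)^{-2}\le \lambda(C_B)\le C\,q(B)^{-2}$, and taking $\log_b$ gives
\[
-\log_b\lambda(C_B) = 2\log_b q(B) + O(1).
\]
The ceiling in the definition of $L_b$ contributes only a further additive error of at most $1$, so
\[
L_b(C_B) = 2\log_b q(B) + O(1),
\]
with the implied constant depending only on $b$ (through the fixed $c,C$, which in fact come from the inequality $q(B)\le q(B')+q(B)\le 2q(B)$ and are absolute). This already isolates the one remaining ingredient: a lower bound on $q(B)=q_{|B|}$ purely in terms of $|B|$.

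Next I would prove the lower bound $q_n\ge \phi^{\,n-1}$ for $n\ge 1$ by induction, which captures exactly the slowest possible growth (all partial quotients equal to $1$). The base cases are $q_1=a_1\ge 1=\phi^0$ and $q_2=a_2q_1+q_0\ge q_1+q_0\ge 2\ge\phi$. For the inductive step, the recurrence together with $a_{n+1}\ge 1$ gives $q_{n+1}\ge q_n+q_{n-1}\ge \phi^{\,n-1}+\phi^{\,n-2}=\phi^{\,n-2}(\phi+1)=\phi^{\,n}$, where the last equality uses the defining identity $\phi^2=\phi+1$. Hence $\log_b q(B)\ge (|B|-1)\log_b\phi = |B|\log_b\phi + O(1)$.

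Combining the two displays yields $L_b(C_B)=2\log_b q(B)+O(1)\ge 2|B|\log_b\phi+O(1)$, as required. I do not expect any genuine obstacle here; the whole argument is routine once Proposition \ref{prop:Lebesgue to denominator comparison} is invoked, and the only point needing a little care is the bookkeeping of the $O(1)$ terms, namely checking that passing from the multiplicative $\asymp$ to an additive error in the exponent, together with the ceiling and the $\phi^{-1}$ from the induction, all collapse into a single $O(1)$ whose constant is independent of $B$ (depending at most on $b$).
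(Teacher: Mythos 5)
Your proof is correct. It rests on the same underlying fact as the paper's proof --- that continued fraction denominators grow at least like $\phi^n$, with the all-ones (Fibonacci) block as the extremal case --- but the execution is genuinely different in its packaging. The paper fixes the length $|B|$, asserts that $\lambda(C_B)$ is maximized (equivalently, $L_b(C_B)$ minimized) at the all-ones block, identifies $q(B),q(B')$ as Fibonacci numbers in that case, and invokes Binet's formula to get $\lambda(C_B)\asymp \phi^{-2|B|}$ in the worst case. You instead route everything through Proposition \ref{prop:Lebesgue to denominator comparison} to obtain $L_b(C_B)=2\log_b q(B)+O(1)$ for an \emph{arbitrary} block, and then supply the single missing ingredient as a clean induction, $q_n\ge \phi^{n-1}$. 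What your route buys: it makes explicit the step the paper leaves implicit (that the all-ones block really does minimize the denominators among blocks of a given length --- a monotonicity claim that itself needs essentially the induction you wrote), so your argument is slightly more self-contained and avoids Binet's formula entirely. What the paper's route buys: by computing the extremal case exactly, it shows the bound is sharp, i.e.\ the all-ones block attains $L_b(C_B)=2|B|\log_b(\phi)+O(1)$, a fact the paper implicitly reuses later (e.g.\ in Lemma \ref{lem:length - adding block of 1s}). Your bookkeeping of the $O(1)$ terms, including uniformity in $b$, is fine.
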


\begin{proof}
Let us consider all blocks $B$ that have the same length. First note that $L_b(C_B)$ is smallest when $\lambda(C_B)$ is largest, and that, by \eqref{eq:Cs lebesgue measure}, this happens when $q(B),q(B')$ are as small as possible.  The smallest $q(B),q(B')$ could be is if $B$ is composed entirely of $1$'s. In this case, $q(B)=F_{|B|+1}$ and $q(B')=F_{|B|}$, where $F_n$ is the $n$th Fibonacci number, starting with $F_0=0$, $F_1=1$. Using Binet's formula, we see that in this case, $\lambda(C_B) \asymp \phi^{-2|B|}$. Therefore, in this worst-case scenario, we have
\begin{align*}
L_b(C_B) & = -\lceil \log_b \lambda(C_B) \rceil = - \left\lceil -2|B| \log_b( \phi) +O(1) \right\rceil\\
&= 2|B|\log_b(\phi)+O(1).
\end{align*}
This gives the desired result.
\end{proof}

\subsection{Dynamics of continued fractions}

We let $T:[0,1)\to [0,1)$ denote the usual Gauss map (which we referred to as $T_{\text{CF}}$ in the introduction):
\[
Tx = \begin{cases} \dfrac{1}{x}-\left\lfloor \dfrac{1}{x} \right\rfloor, & x\neq 0,\\
0, & x=0.
\end{cases}
\]
We recall that the Gauss map has a corresponding invariant measure, the Gauss measure $\mu$, defined by
\[
\mu(A) = \frac{1}{\log 2} \int_A \frac{dx}{1+x}.
\]
The definition easily implies that the Gauss measure and Lebesgue measure are very closely related. In fact, for any measurable set $A$, we have
\[
\frac{\lambda(A)}{2\log 2} \le \mu(A) \le \frac{\lambda(A)}{\log 2},
\]
and so
\begin{equation}
\mu(A) \asymp \lambda (A). \label{eq:mu nu asymptotic}
\end{equation}

The Gauss map satisfies Reny\'{i}'s condition, which states that for any cylinder $C_B$, we have that there is a uniform upper bound $L$ on 
\[
\frac{\sup_{z\in [0,1)} |(T_B^{-1})' z|}{\inf_{z\in [0,1)} |(T_B^{-1})' z|},
\]
where $T_B$ is the map $T^k:C_B\to [0,1)$.

Reny\'{i}'s condition says that the Gauss map has bounded distortion on each cylinder. The key consequence of this that we will use in this paper is the following: for any blocks $B,B'$, we have
\begin{equation}
\frac{1}{L} \mu(C_B)\mu(C_{B'})\le \mu(C_{B B'}) \le L \cdot \mu(C_B)\mu(C_{B'}).\label{eq:Renyi-condition}
\end{equation} To see this, we use the following argument:
\begin{align*}
    \frac{\mu(C_{BB'})}{\mu(C_B)} &= \frac{\mu\left(T_B^{-1} C_{B'} \right)}{\mu(T_B^{-1} [0,1))}= \frac{\int_{T_B^{-1} C_{B'}} d\mu}{\int_{T_B^{-1} [0,1)} d\mu}= \frac{\int_{C_{B'}} |(T_B^{-1})'z| d\mu}{\int |(T_B^{-1})'z| d\mu}\\
    &\le \frac{\int_{C_{B'}} d\mu \cdot \sup_{z\in [0,1)} |(T_B^{-1})'z|}{\int d\mu\cdot \inf_{z\in [0,1)} |(T_B^{-1})'z|} \le L \frac{\mu(C_{B'})}{1} = L\cdot  \mu(C_{B'}).
\end{align*}
The proof of the lower bound follows a similar argument.

Using Renyi's condition, we may prove a variety of related results.

\begin{prop}\label{prop:product of denominators}
Let $B,B'$ be any blocks of CF digits. Then all of the following hold with uniform implicit constants.
\begin{enumerate}
    \item $\lambda(C_{BB'}) \asymp \lambda(C_{B})\lambda(C_{B'})$
    \item $q(BB') \asymp q(B)q(B')$
    \item $L_b(C_{BB'}) = L_b(C_B)+L_b(C_{B'})+O(1)$
\end{enumerate}
\end{prop}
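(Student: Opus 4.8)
The plan is to obtain all three assertions as formal consequences of the Rényi condition in the form \eqref{eq:Renyi-condition}, together with the comparability of $\mu$ and $\lambda$ from \eqref{eq:mu nu asymptotic} and the Lebesgue-to-denominator comparison of Proposition \ref{prop:Lebesgue to denominator comparison}; no new geometric input should be needed. For part (1), I would start from \eqref{eq:Renyi-condition}, which already gives $\mu(C_{BB'}) \asymp \mu(C_B)\mu(C_{B'})$ with the uniform constant $L$, and then replace every occurrence of $\mu$ by $\lambda$ using \eqref{eq:mu nu asymptotic}. Since \eqref{eq:mu nu asymptotic} is a two-sided bound with absolute constants, this chain of $\asymp$'s composes to $\lambda(C_{BB'}) \asymp \lambda(C_B)\lambda(C_{B'})$, with implicit constants depending only on $L$ and on the constants in \eqref{eq:mu nu asymptotic}, hence uniform in $B$ and $B'$.

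Part (2) I would deduce from part (1) by substituting the estimate $\lambda(C_B) \asymp q(B)^{-2}$ of Proposition \ref{prop:Lebesgue to denominator comparison} into each of the three cylinders. This turns part (1) into $q(BB')^{-2} \asymp q(B)^{-2} q(B')^{-2}$, and since all quantities are positive one may take reciprocals and square roots (both of which preserve $\asymp$ with transformed constants) to conclude $q(BB') \asymp q(B) q(B')$.

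For part (3) I would unwind the definition $L_b(C_\cdot) = -\lceil \log_b \lambda(C_\cdot)\rceil$. Writing $x = \log_b \lambda(C_B)$ and $y = \log_b \lambda(C_{B'})$, part (1) gives $\log_b \lambda(C_{BB'}) = x + y + O(1)$ with a uniform $O(1)$. The only point that needs a moment's care is the interaction with the ceiling: since $\lceil t\rceil = t + O(1)$ for every real $t$, one has $\lceil x + y + O(1)\rceil = \lceil x\rceil + \lceil y\rceil + O(1)$, so that
\[
L_b(C_{BB'}) = -\lceil x + y + O(1)\rceil = -\lceil x\rceil - \lceil y\rceil + O(1) = L_b(C_B) + L_b(C_{B'}) + O(1).
\]

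I do not expect any genuine obstacle here: the content is entirely contained in Rényi's condition, and the remaining work is bookkeeping. The one thing worth stating explicitly is that all implicit constants, and the $O(1)$ in part (3), are independent of $B$ and $B'$, which is exactly what the uniformity of $L$ in Rényi's condition provides. That uniformity is precisely what will make the result usable later, when it is applied to the long concatenations $\overline{B}_{1,1}\overline{B}_{2,1}\cdots\overline{B}_{i,j}$ whose lengths grow without bound.
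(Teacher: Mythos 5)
Your proposal is correct and follows exactly the paper's own proof: part (1) from R\'enyi's condition \eqref{eq:Renyi-condition} combined with \eqref{eq:mu nu asymptotic}, part (2) from part (1) together with Proposition \ref{prop:Lebesgue to denominator comparison}, and part (3) from part (1), the definition of $L_b(\cdot)$, and $\lceil z\rceil = z + O(1)$. The only difference is that you spell out the bookkeeping the paper leaves implicit, including the uniformity of the constants, which is accurate and harmless.
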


\begin{proof}
The first result holds by combining \eqref{eq:mu nu asymptotic} with \eqref{eq:Renyi-condition}. The second follows from the first and Proposition \ref{prop:Lebesgue to denominator comparison}. The third follows from the first, the definition of $L_b(\cdot)$, and the fact that $\lceil z\rceil = z+O(1)$.
\end{proof}

The following lemma will help us determine how much appending a string of $1$'s to a CF expansion will alter the base-$b$ expansion.

\begin{lem}\label{lem:length - adding block of 1s}
For any finite block $B$ of continued fraction digits, any integer base $b\ge 2$, and any positive integer $k$, we have that
\[
L_b(C_{B(1)^k}) = L_b(C_B)+2k \log_b(\phi)+O(1),
\]
where $\phi=(1+\sqrt{5})/2$
\end{lem}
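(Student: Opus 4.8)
The plan is to reduce the claim to Proposition \ref{prop:product of denominators}(3) by analyzing the denominator of the block $(1)^k$ on its own and then invoking the multiplicativity of $L_b$ under concatenation. First I would observe that, just as in the proof of Lemma \ref{lem:CF to base b digits}, the block $(1)^k$ consisting of $k$ ones has convergent denominators equal to consecutive Fibonacci numbers: if $D=(1)^k$ then $q(D)=F_{k+1}$ and $q(D')=F_k$, where $D'$ is the length-$(k-1)$ prefix. By \eqref{eq:Cs lebesgue measure} this gives $\lambda(C_{(1)^k}) = 1/((F_k+F_{k+1})F_{k+1}) = 1/(F_{k+2}F_{k+1})$, and Binet's formula yields $\lambda(C_{(1)^k}) \asymp \phi^{-2k}$ with uniform implicit constants. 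Consequently, by the definition $L_b(I)=-\lceil \log_b \lambda(I)\rceil$ together with $\lceil z\rceil = z+O(1)$, we obtain
\[
L_b(C_{(1)^k}) = -\lceil \log_b \lambda(C_{(1)^k}) \rceil = 2k\log_b(\phi) + O(1).
\]

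The second step is to apply Proposition \ref{prop:product of denominators}(3) to the two blocks $B$ and $(1)^k$, which are being concatenated as $B(1)^k$. That proposition states precisely that $L_b(C_{BB'}) = L_b(C_B) + L_b(C_{B'}) + O(1)$ with a uniform implicit constant, so taking $B'=(1)^k$ gives
\[
L_b(C_{B(1)^k}) = L_b(C_B) + L_b(C_{(1)^k}) + O(1).
\]
Substituting the value of $L_b(C_{(1)^k})$ computed in the first step then yields $L_b(C_{B(1)^k}) = L_b(C_B) + 2k\log_b(\phi) + O(1)$, which is the desired conclusion.

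The main point to be careful about is the uniformity of all the error terms: the $O(1)$ in the final statement must be independent of $B$, $k$, and (ideally) $b$. The $O(1)$ coming from Proposition \ref{prop:product of denominators}(3) is uniform by the Rényi-condition argument that underlies it, since the distortion bound $L$ is uniform over all cylinders. The $O(1)$ in the first step arises from Binet's formula ($\phi^{-2k}$ approximation to $1/(F_{k+2}F_{k+1})$) and from rounding via the ceiling function; both are genuinely bounded uniformly in $k$. I do not expect any serious obstacle here — the whole argument is essentially a bookkeeping exercise that packages the Fibonacci computation of Lemma \ref{lem:CF to base b digits} together with the concatenation formula. The only mild subtlety is that, strictly speaking, the implicit constant may depend on the base $b$ through the conversion $\log_b$, but since $b$ is fixed throughout each application this causes no difficulty.
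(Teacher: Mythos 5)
Your proof is correct and follows essentially the same route as the paper, whose own proof simply cites Proposition \ref{prop:product of denominators} together with the Fibonacci/Binet computation from the proof of Lemma \ref{lem:CF to base b digits}; you have merely written out explicitly the step $\lambda(C_{(1)^k}) \asymp \phi^{-2k}$ and the concatenation formula $L_b(C_{B(1)^k}) = L_b(C_B) + L_b(C_{(1)^k}) + O(1)$ that the paper leaves implicit. Your closing remarks on uniformity of the error terms are also accurate.
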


\begin{proof} This follows immediately by combining parts (1) and (3) of Proposition \ref{prop:product of denominators} together with facts discussed in the proof of Lemma \ref{lem:CF to base b digits}.
\end{proof}

In the following we  consider how adding a single digit to a CF expansion alters the base-$b$ expansion.

\begin{lem}\label{lem:length - adding a single digit}
For any finite block $B$ of continued fraction digits, any integer base $b\ge 2$, and any positive integer $d$, we have that
\[
L_b(C_{Bd}) = L_b(C_B)+2 \log_b(d)+O(1).
\]
\end{lem}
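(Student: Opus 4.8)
The plan is to treat the single appended digit $d$ as a one-element block and apply the machinery of Proposition \ref{prop:product of denominators}, which already packages the relevant estimates with uniform implicit constants. Writing $d$ also for the block consisting of just that digit, we have $Bd = B \cdot d$ as a concatenation, so part (3) of Proposition \ref{prop:product of denominators} gives
\[
L_b(C_{Bd}) = L_b(C_B) + L_b(C_d) + O(1),
\]
and it remains only to evaluate $L_b(C_d)$.

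For this, I would compute the denominator of the one-digit block directly from the recurrence: $q(d) = a_1 q_0 + q_{-1} = d\cdot 1 + 0 = d$. Proposition \ref{prop:Lebesgue to denominator comparison} then yields $\lambda(C_d) \asymp q(d)^{-2} = d^{-2}$, so that $\log_b \lambda(C_d) = -2\log_b(d) + O(1)$. Feeding this into the definition $L_b(C_d) = -\lceil \log_b \lambda(C_d)\rceil$ and using $\lceil z\rceil = z + O(1)$ gives $L_b(C_d) = 2\log_b(d) + O(1)$. Substituting into the displayed identity produces $L_b(C_{Bd}) = L_b(C_B) + 2\log_b(d) + O(1)$, as desired.

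Alternatively, and equivalently, one can avoid invoking part (3) of Proposition \ref{prop:product of denominators} and argue purely through denominators: $q(Bd) = d\, q(B) + q(B'')$, where $B''$ is the length-$(|B|-1)$ prefix of $B$, so $d\, q(B) \le q(Bd) \le (d+1)q(B)$ and hence $q(Bd) \asymp d\, q(B)$. Proposition \ref{prop:Lebesgue to denominator comparison} then gives $\lambda(C_{Bd}) \asymp d^{-2}\lambda(C_B)$, and taking $-\lceil \log_b(\cdot)\rceil$ of both sides recovers the claim. This is essentially the same pattern used in the proof of Lemma \ref{lem:length - adding block of 1s}, merely with the fixed block $(1)^k$ replaced by the single variable digit $d$.

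The only point requiring care—and the step I would flag as the main (mild) obstacle—is \emph{uniformity in $d$}: since $d$ ranges over all positive integers, the $O(1)$ error must not secretly depend on $d$. This is exactly what Proposition \ref{prop:product of denominators} provides, because its implicit constants descend from the single Rényi constant $L$, which bounds the distortion uniformly across \emph{all} cylinders, including every rank-one cylinder $C_d$. Once this uniformity is noted, the estimate is immediate, so no genuinely new work beyond the preliminaries is needed.
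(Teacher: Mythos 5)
Your proposal is correct and follows essentially the same route as the paper, which proves the lemma by combining part (3) of Proposition \ref{prop:product of denominators}, Proposition \ref{prop:Lebesgue to denominator comparison} (applied to the one-digit block $d$, giving $\lambda(C_d)\asymp d^{-2}$), and the definition of $L_b(\cdot)$. Your remarks on uniformity in $d$ and the alternative denominator-recurrence argument are sound but supplementary; the paper leaves these details implicit.
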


\begin{proof}
This follows by combining part (3) of Proposition \ref{prop:product of denominators}, Proposition \ref{prop:Lebesgue to denominator comparison}, and the definition of $L_b(\cdot)$.
\end{proof}

\begin{lem}\label{lem:C_B versus C_B^*}
For any block $B$, we have $\mu(C_B)=\mu(C_{B^*})$.
\end{lem}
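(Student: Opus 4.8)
The plan is to show that $\mu(C_B)=\mu(C_{B^*})$ by reducing everything to denominators via the asymptotic tools already established, and then proving that the denominators of a block and its reversal are \emph{exactly} equal. The key observation, already recorded in the excerpt in the discussion following \eqref{eq:CF determinant relation}, is that if $r_B=p/q$ in lowest terms then $r_{B^*}=p^*/q$, with the \emph{same} denominator $q$. This is an immediate consequence of taking the transpose of the matrix product in \eqref{eq:CF matrix relation}: transposing reverses the order of the factors $\left(\begin{smallmatrix}0&1\\1&a_i\end{smallmatrix}\right)$, which are themselves symmetric, so the transpose of the matrix for $B$ is exactly the matrix for $B^*$. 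Reading off the bottom-right entry of both matrices gives $q(B)=q(B^*)$, and likewise the bottom-left entries give $q_{|B|-1}(B)=q_{|B|-1}(B^*)$ where these are the denominators of the length-$(|B|-1)$ prefixes.

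With $q(B)=q(B^*)$ in hand, I would feed this into the Lebesgue-measure formula \eqref{eq:Cs lebesgue measure}. Writing $B'$ for the length-$(|B|-1)$ prefix of $B$ and $(B^*)'$ for that of $B^*$, formula \eqref{eq:Cs lebesgue measure} gives
\[
\lambda(C_B)=\frac{1}{(q(B')+q(B))\,q(B)},\qquad
\lambda(C_{B^*})=\frac{1}{(q((B^*)')+q(B^*))\,q(B^*)}.
\]
The factor $q(B)=q(B^*)$ matches, and the remaining task is to identify $q(B')+q(B)$ with $q((B^*)')+q(B^*)$. Here one must be slightly careful: $(B^*)'$ is the prefix of $B^*$ of length $|B|-1$, which is the reversal of the \emph{suffix} of $B$ of length $|B|-1$, not of the prefix $B'$. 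Nonetheless the transpose argument controls this too. The matrix identity shows that the bottom row of the product matrix for $B^*$ is $(q_{n-1}(B^*),\,q_n(B^*))$, equal to the bottom row for $B$, namely $(q_{n-1}(B),q_n(B))$; so $q((B^*)')=q_{n-1}(B^*)=q_{n-1}(B)=q(B')$. Substituting shows the two Lebesgue measures are literally equal, $\lambda(C_B)=\lambda(C_{B^*})$.

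From here the conclusion for $\mu$ is quick. One route is to note that \eqref{eq:mu nu asymptotic} only gives $\mu\asymp\lambda$, which is not enough for an exact equality, so I would instead argue at the level of the Gauss measure directly. Since $C_B$ and $C_{B^*}$ are intervals with the same Lebesgue length, exact equality of $\mu$ is not automatic (the density $1/((1+x)\log 2)$ is not constant), so the cleanest finish is to invoke a reversal symmetry of the Gauss measure itself rather than hope the densities match. I expect the main obstacle to be precisely this last point: passing from equal Lebesgue length to equal Gauss measure. The standard resolution is the natural-extension / two-sided symmetry of the Gauss map, under which the measure of a cylinder depends only on the multiset-with-order data through the symmetric denominator formula; concretely, $\mu(C_B)$ admits an exact expression in terms of $q(B)$ and $q(B')$ (via the Gauss-measure analogue of \eqref{eq:Cs lebesgue measure}, integrating $1/((1+x)\log 2)$ over the interval with endpoints $p(B)/q(B)$ and $(p(B')+p(B))/(q(B')+q(B))$), and this expression is manifestly invariant under the substitution that the transpose identity provides, since it depends on $B$ only through $p(B),q(B),p(B'),q(B')$ in a way symmetric under reversal. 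I would verify that the closed form for $\mu(C_B)$ depends on $B$ only through the pair $\{q(B),q(B')\}$ together with the determinant sign $(-1)^{|B|}$, all of which are reversal-invariant, thereby forcing $\mu(C_B)=\mu(C_{B^*})$.
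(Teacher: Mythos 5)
There is a genuine error in your central step. Transposing \eqref{eq:CF matrix relation} does show that the matrix for $B^*$ is the transpose of the matrix for $B$, but transposition swaps the \emph{off-diagonal} entries: writing $n=|B|$, one gets $q_n(B^*)=q_n(B)$ and $p_{n-1}(B^*)=p_{n-1}(B)$, while $q_{n-1}(B^*)=p_n(B)$ and $p_n(B^*)=q_{n-1}(B)$. So your claim that the bottom rows of the two matrices agree, and hence that $q((B^*)')=q(B')$, is false, and with it the intermediate conclusion $\lambda(C_B)=\lambda(C_{B^*})$. Concretely, take $B=12$, so $B^*=21$: the cylinder $C_B$ has endpoints $2/3$ and $3/4$, giving $\lambda(C_B)=1/12$, while $C_{B^*}$ has endpoints $1/3$ and $2/5$, giving $\lambda(C_{B^*})=1/15$. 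The lemma is genuinely a statement about the Gauss measure and not the Lebesgue measure (here both Gauss measures equal $\frac{1}{\log 2}\log\frac{21}{20}$). The same example defeats your closing assertion that $\{q(B),q(B')\}$ and $(-1)^{|B|}$ are reversal-invariant: for $B=12$ this data is $(3,1)$, but for $B^*=21$ it is $(3,2)$.

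Your strategy can, however, be repaired, and the repair yields a proof genuinely different from the paper's. Integrating the Gauss density over the cylinder with endpoints $p_n/q_n$ and $(p_{n-1}+p_n)/(q_{n-1}+q_n)$ gives the exact formula
\[
\mu(C_B)=\frac{1}{\log 2}\left|\log\frac{(p_{n-1}+p_n+q_{n-1}+q_n)\,q_n}{(q_{n-1}+q_n)(p_n+q_n)}\right|,
\]
and the correct transpose relations say that passing from $B$ to $B^*$ swaps $p_n\leftrightarrow q_{n-1}$ while fixing $p_{n-1}$ and $q_n$; the displayed expression is manifestly invariant under this swap, which proves the lemma. (Note the invariance is invisible at the level of $\lambda$, whose formula \eqref{eq:Cs lebesgue measure} is not symmetric under the swap; the factor $1/(1+x)$ is exactly what restores the symmetry.) The paper argues differently: it passes to the natural extension $\tilde{T}$ on $[0,1)^2$ with invariant measure $\tilde{\mu}$, uses $\tilde{\mu}(E\times[0,1))=\tilde{\mu}([0,1)\times E)=\mu(E)$, and observes that $\tilde{T}^{|B|}$ carries $C_B\times[0,1)$ onto $[0,1)\times C_{B^*}$, so measure-preservation gives the equality with no computation. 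Both routes are valid once your matrix identity is corrected: yours is an elementary two-line computation from \eqref{eq:CF matrix relation}, while the paper's explains the symmetry dynamically and avoids endpoint formulas entirely.
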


\begin{proof}
For this we consider the natural extension $\tilde{T}:[0,1)^2\to[0,1)^2$ of $T$, which acts bijectively by
\[
\tilde{T}(x,y)= \left(Tx, \frac{1}{a_1(x)+y}\right),
\]
when $x\neq 0$ and leaves the following measure invariant:
\[
\tilde{\mu}(A) = \frac{1}{\log 2} \iint_A \frac{dx \ dy}{(1+xy)^2}.
\]
It's also easy to see that for any $E\subset [0,1)$ that $\tilde{\mu}(E\times [0,1))=\tilde{\mu}([0,1)\times E)=\mu(E)$.
We then have by the invariance of $\tilde{T}$ that
\begin{align*}
    \mu(C_B) &= \tilde{\mu}(C_B\times [0,1)) = \tilde{\mu}\left(\tilde{T}^{|B|}(C_B\times [0,1))\right)\\
    &= \tilde{\mu}([0,1)\times C_{B^*}) = \mu(C_{B^*})
\end{align*}
as desired.
\end{proof}

\subsection{Farey fractions}

We let $\mathcal{F}_m$ denote the set of all fractions in $[0,1]$ which have a denominator at most $m$ when written in lowest terms. (We will always assume fractions are written in lowest terms.) It is known that $\#\mathcal{F}_m\asymp m^2$. The set $\mathcal{F}_m$ has a natural ordering based on $<$ and we will say that $P/Q,P'/Q'\in \mathcal{F}_m$ are consecutive fractions in $\mathcal{F}_m$ if $P/Q<P'/Q'$ and if no other element of $\mathcal{F}_m$ lies between them.

If $\frac{P}{Q},\frac{P'}{Q'}\in\mathcal{F}_m$ are consecutive, then it is well-known that 
\begin{equation}
\frac{P'}{Q'}-\frac{P}{Q} = \frac{1}{QQ'} \qquad \text{or, equivalently,} \qquad QP'-PQ'=1. \label{eq:consecutive farey denominators}
\end{equation}

Note that if $\frac{P}{Q},\frac{P'}{Q'}\in\mathcal{F}_m$ are consecutive, then it must be that $\max(Q,Q')\ge m/2$. This is true because the mediant $\frac{P+P'}{Q+Q'}$ lies between $P/Q$ and $P'/Q'$ and if both $Q,Q'< m/2$, then the mediant belongs to $\mathcal{F}_m$, so the fractions cannot be consecutive.

Suppose $\frac{P}{Q}\in \mathcal{F}_m$ with $Q\ge 2$ is such that neither neighboring fraction (the one it precedes and the one that it succeeds) has a larger denominator. Moreover, suppose the continued fraction expansion of $\frac{P}{Q}$ can be written as $[a_1,a_2,\dots, a_n+1]$ so that the last digit is not $1$. (Note that since $Q\ge 2$ this is always possible.) Then the preceding and succeeding fractions are, in some order,
\begin{equation}
[a_1,a_2,\dots, a_n] \qquad \text{ and } \qquad [a_1,a_2,\dots, a_{n-1}]. \label{eq:neighboring farey forms}
\end{equation}
(See, for instance, \cite[Lemma 1]{MZ}.) Note that the first and last elements of any given $\mathcal{F}_m$ are always $\frac{0}{1}$ and $\frac{1}{1}$, which have CF expansions $\wedge$ and $[1]$ respectively. 

If $\frac{P}{Q}, \frac{P'}{Q'}\in \mathcal{F}_m$ are successive fractions and $Q>Q'$ (but we no longer make a demand of how $Q$ compares to the denominator of the other neighboring fraction), then we may still claim that $\frac{P}{Q}$ can be written as $[a_1,a_2,\dots, a_n+1]$ and $\frac{P'}{Q'}$ has one of the forms in \eqref{eq:neighboring farey forms}, simply by viewing both fractions as belonging to $\mathcal{F}_{Q}$ instead of $\mathcal{F}_m$, and here $\frac{P}{Q}, \frac{P'}{Q'}$ are still successive and $\frac{P}{Q}$ must have a denominator at least as large as both its neighbors.

The following fact is a direct consequence of this:
\begin{lem}\label{lem:compare point to Farey}Let $B$ be some block of CF digits with $q(B)\ge m$ for some $m\ge 2$. Suppose $P/Q, P'/Q'$ are consecutive fractions in $\mathcal{F}_m$ with
\[
\frac{P}{Q} < r_B < \frac{P'}{Q'}.
\] If $B_1$ is the shortest block such that $r_{B_1}=P/Q$ and if $B_2$ is the prefix of $B$ such that $r_{B_2}=r_B(m)$, then $-1\le |B_2|-|B_1|\le 4$ and the prefixes of $B_1$ and $B_2$ of length $|B_1|-1$ are the same.  
\end{lem}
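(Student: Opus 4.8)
The plan is to pin down the two Farey neighbours of $x:=r_B$ explicitly in terms of the convergents of $x$, and then simply read off their continued fraction expansions. Write $B=a_1a_2\cdots a_N$ with convergents $p_i/q_i=r_{a_1\cdots a_i}$. By definition $k:=|B_2|$ is the largest index with $q_k\le m$, so that $r_{B_2}=p_k/q_k=r_B(m)$. The hypothesis $q(B)\ge m$ forces $q(B)>m$ (otherwise $r_B\in\mathcal{F}_m$ could not lie strictly between two of its elements), hence $k<N$; in particular the digit $a_{k+1}$ really occurs in $B$ and $x\neq p_k/q_k$, so $x$ genuinely lies strictly between its two neighbours.

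The main step is the classical identification of those two neighbours as $p_k/q_k$ together with the best semiconvergent
\[
s=\frac{j_0p_k+p_{k-1}}{j_0q_k+q_{k-1}},\qquad j_0=\left\lfloor\frac{m-q_{k-1}}{q_k}\right\rfloor\in\{0,1,\dots,a_{k+1}-1\}.
\]
I would prove this directly: writing $s=P_s/Q_s$, the determinant relation \eqref{eq:CF determinant relation} gives $|q_kP_s-p_kQ_s|=|p_{k-1}q_k-p_kq_{k-1}|=1$, so $p_k/q_k$ and $s$ are Farey neighbours; their mediant has denominator $q_{k-1}+(j_0+1)q_k>m$ by the choice of $j_0$, so no element of $\mathcal{F}_m$ lies between them; and the semiconvergents for $0\le j\le a_{k+1}$ all lie on the side of $p_{k-1}/q_{k-1}$, which is the opposite side of $x$ from $p_k/q_k$, so that $x$ lies strictly between $p_k/q_k$ and $s$. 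Hence $\{P/Q,P'/Q'\}=\{p_k/q_k,\,s\}$.

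With the neighbours named, their expansions are $p_k/q_k=[a_1,\dots,a_k]$ and, when $j_0\ge1$, $s=[a_1,\dots,a_k,j_0]$, while $s=[a_1,\dots,a_{k-1}]=p_{k-1}/q_{k-1}$ when $j_0=0$. Since even convergents lie below $x$ and odd convergents above it, the lower neighbour $P/Q$ equals $p_k/q_k$ when $k$ is even and equals $s$ when $k$ is odd. In each case I would extract the shortest block $B_1$ with $r_{B_1}=P/Q$ using the two-to-one structure of $B\mapsto r_B$ recorded earlier: a block ending in a digit $\ge2$ is already shortest, whereas one ending in $1$ shortens by length one by absorbing that final $1$ into the preceding digit. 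A brief case check over the finitely many possibilities ($k$ even or odd; $j_0=0$, $j_0=1$, or $j_0\ge2$; last digit $1$ or $\ge2$) then gives $|B_1|\in\{k-2,k-1,k,k+1\}$, i.e. $-1\le|B_2|-|B_1|\le2$, comfortably inside the stated range; and in every case the length-$(|B_1|-1)$ prefixes of $B_1$ and of $B_2=[a_1,\dots,a_k]$ agree, since all the representations in play coincide with $a_1a_2\cdots$ except in their last one or two entries.

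The technical heart is the neighbour identification of the second paragraph, and the one place demanding genuine care is the bookkeeping of the $\pm1$ ambiguity in continued fraction representations when passing to shortest blocks. The degenerate endpoints $r_\wedge=0/1$ and $[1]=1/1$ (which can occur only when $k$ is very small, making the prefix claim vacuous) and the boundary value $j_0=0$ should be checked separately, but each only improves the bounds; the generous constant $4$ in the statement is there precisely to absorb such edge cases without having to optimise the count.
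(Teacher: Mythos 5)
Your proof is correct, but it runs in the opposite direction from the paper's and rests on a different key fact. The paper starts from the Farey pair: it invokes the Moshchevitin--Zhigljavsky characterization quoted in \eqref{eq:neighboring farey forms} to write the neighbour with the larger denominator as $[a_1,\dots,a_n+1]$ and its companion as one of the truncations $[a_1,\dots,a_n]$ or $[a_1,\dots,a_{n-1}]$, observes that the gap between the two neighbours is (up to endpoints) the cylinder $C_{a_1\dots a_n}$ or the union $\bigcup_{i\ge1}C_{a_1a_2\dots(a_n+i)}$, and deduces from $r_B$ lying in that gap how $B$ must begin; the upper bound on $|B_2|$ then needs an extra estimate, namely the doubling property \eqref{eq:qn relation} combined with $\max(Q,Q')\ge m/2$. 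You instead start from $B$: with $k=|B_2|$ you identify the two neighbours of $r_B$ in $\mathcal{F}_m$ outright as the convergent $p_k/q_k$ and the extremal semiconvergent $s=[a_1,\dots,a_k,j_0]$ (or $[a_1,\dots,a_{k-1}]$ when $j_0=0$), proving this classical identification from scratch via the determinant relation \eqref{eq:CF determinant relation} and the mediant-denominator criterion. This buys you two things: the argument is self-contained, needing no citation, and the upper bound on $|B_2|$ is definitional---$|B_2|=k$---so no doubling argument is required and you obtain the sharper conclusion $-1\le|B_2|-|B_1|\le 2$, comfortably inside the stated $-1\le|B_2|-|B_1|\le 4$. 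What the paper's route buys in exchange is the explicit cylinder description of the Farey gaps, which it reuses immediately afterwards (Remark \ref{rem:Farey interval to CF interval}, and again in the proof of Proposition \ref{prop:bound on denominator normal}); your argument yields that description as well, but only implicitly. The places that genuinely demand care in your version---the $\pm1$ ambiguity of shortest CF representations, the $j_0=0$ case, and the degenerate endpoints $r_\wedge=0/1$ and $[1]=1/1$---are exactly the ones you flag, and your case check handles them; in particular your verification that $j_0\le a_{k+1}-1$ (from $q_{k+1}>m$) is what makes $x$ lie strictly between $p_k/q_k$ and $s$, so no gap remains.
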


\begin{proof}
If $Q>Q'$, then by the fact alluded to just before this lemma, we must be able to write $P/Q=[a_1,a_2,\dots, a_n+1]$ and $P'/Q'$ must be either 
\[
[a_1,a_2,\dots, a_n] \qquad \text{ or } \qquad [a_1,a_2,\dots, a_{n-1}].
\]
In the former case, the interval from $P/Q$ to $P'/Q'$ is, up to endpoints, equal to the cylinder set $C_{a_1a_2\dots a_n}$. In the latter case, the interval from $P/Q$ to $P'/Q'$ is, up to endpoints again, equal to the union of rank-$n$ cylinder sets
\[
\bigcup_{i=1}^{\infty} C_{a_1a_2 \dots (a_n+i)}.
\]
Thus, we see that $B$ must have at least $n=|B_1|$ digits and that the $n$th digit of $B$ is at least $a_n$ in size. Therefore, $|B_2|$ cannot be more than $|B_1|+2$ digits long, as otherwise, since the denominators of convergents at least double whenever two digits are appended and since $Q\ge m/2$, we would have $q(B_2)>m$, which contradicts the definition of $B_2$. On the other hand, we must have $|B_2|\ge |B_1|-1$, as $B$ must be prefixed by $a_1a_2\dots a_{n-1}$ and we know $q_{a_1a_2\dots a_{n-1}}\le Q\le m$. This proves the result in this case.   

The proof if $Q'>Q$ is similar, but now it could be that $P'/Q' = [a_1,a_2,\dots, a_n]$ and $P/Q = [a_1,a_2,\dots, a_{n-1}]$. If $a_{n-1}=1$, then $|B_1|=n-2$, giving the slightly worse upper bound of $|B_2|-|B_1|\le 4$.
\end{proof}

\begin{rem}\label{rem:Farey interval to CF interval}
Looking closer at the beginning of the previous proof, we notice that for any block $B$ such that $r_B\in \mathcal{F}_m$, the interval between $r_B$ and the succeeding Farey fraction is contained (up to endpoints) in the cylinder $C_{B_p}$, where $B_p$ is the prefix of $B$ of length $|B|-2$. The reason why we cannot make use of a longer prefix is because it may be that the interval has the form  $\bigcup_{i=1}^{\infty} C_{a_1a_2 \dots (a_n+i)}$ while $B$ ends on a $1$
\end{rem}

The following is a variant of Lemma 5 in \cite{AB}.

\begin{prop}\label{prop:total points in Fareys}
Suppose $m,d$ are positive integers with $m^2\le d$. Let $U$ denote any subset of $\mathcal{F}_m$, and for any $P/Q\in \mathcal{F}_m$, let $P'/Q'$ denote the consecutive element. Then
\[
\sum_{P/Q\in U} \#\left\{ a\in \mathbb{Z}_d^* \middle| \frac{P}{Q}< \frac{a}{d} < \frac{P'}{Q'} \right\} \le 2d\cdot \lambda\left( \bigcup_{P/Q\in U} \left\{ x\middle| \frac{P}{Q} \le x \le \frac{P'}{Q'}\right\}  \right),
\]
where $\mathbb{Z}_d^*$ denotes set of integers $[1,d]$ that are relatively prime to $d$.
\end{prop}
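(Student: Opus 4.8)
The plan is to reduce the whole estimate to elementary interval and integer counting, exploiting two facts about Farey fractions: the exact length formula for the gap between consecutive elements of $\mathcal{F}_m$, and the fact that distinct such gaps are essentially disjoint. The hypothesis $m^2\le d$ will enter at exactly one point, and it is what produces the factor of $2$ on the right-hand side.

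First I would record the geometry of the union. Each interval $[P/Q,P'/Q']$ in the union is the gap between two consecutive elements of $\mathcal{F}_m$, so if $P/Q\ne R/S$ are two elements of $U$ then their gaps can meet in at most a single endpoint. Hence the union is (up to a measure-zero set of endpoints) a disjoint union, and by \eqref{eq:consecutive farey denominators} each gap has length exactly $1/(QQ')$. Therefore
\[
\lambda\left(\bigcup_{P/Q\in U}\left\{x\,\middle|\,\tfrac{P}{Q}\le x\le \tfrac{P'}{Q'}\right\}\right)=\sum_{P/Q\in U}\frac{1}{QQ'}.
\]

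Next I would bound the count inside a single gap. Rewriting $\tfrac{P}{Q}<\tfrac{a}{d}<\tfrac{P'}{Q'}$ as $\tfrac{dP}{Q}<a<\tfrac{dP'}{Q'}$, the number of integers $a$ (and a fortiori the number of those coprime to $d$) in this open interval is at most its length plus one, i.e. at most $d\left(\tfrac{P'}{Q'}-\tfrac{P}{Q}\right)+1=\tfrac{d}{QQ'}+1$. Summing over $U$ and inserting the measure identity above gives
\[
\sum_{P/Q\in U}\#\left\{a\in\mathbb{Z}_d^*\,\middle|\,\tfrac{P}{Q}<\tfrac{a}{d}<\tfrac{P'}{Q'}\right\}\le\sum_{P/Q\in U}\left(\frac{d}{QQ'}+1\right)=d\,\lambda\!\left(\bigcup\cdots\right)+\#U.
\]

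The only real content is absorbing the error term $\#U$ back into $d\,\lambda(\bigcup\cdots)$, and this is precisely where $m^2\le d$ is used. Since $Q,Q'\le m$, every gap has length $1/(QQ')\ge 1/m^2\ge 1/d$; as there are $\#U$ disjoint gaps each of length at least $1/d$, the total measure satisfies $\lambda(\bigcup\cdots)\ge \#U/d$, i.e. $\#U\le d\,\lambda(\bigcup\cdots)$. Combining this with the previous display yields the bound $2d\,\lambda(\bigcup\cdots)$, as claimed. I do not anticipate a genuine obstacle here; the delicate bookkeeping is simply recognizing that the crude ``$+1$ per gap'' from integer counting is exactly the second copy of $d\,\lambda(\bigcup\cdots)$, and that the assumption $m^2\le d$ is precisely the statement that each Farey gap is at least as long as the $1/d$ spacing of the candidate fractions $a/d$, which is what makes the number of gaps controllable by the total measure.
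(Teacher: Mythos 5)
Your proof is correct and follows essentially the same route as the paper's: bound the coprime count in each gap by the integer count, estimate that by the gap length times $d$ plus one, and then absorb the $\#U$ error term using $QQ'\le m^2\le d$ together with the exact Farey gap length $1/(QQ')$. The only cosmetic difference is that you phrase the absorption as ``each gap has length at least $1/d$'' while the paper writes $\#U\le\sum_{P/Q\in U} m^2/(QQ')=m^2\lambda(\cdot)\le d\lambda(\cdot)$; these are the same inequality.
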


\begin{proof}We estimate as follows: 
\begin{align*}
\sum_{P/Q\in U} \#\left\{ a\in \mathbb{Z}_d^* \middle| \frac{P}{Q}< \frac{a}{d} < \frac{P'}{Q'} \right\}&\le \sum_{P/Q\in U}  \#\left\{ a\in [1,d] \middle| \frac{P}{Q}< \frac{a}{d} < \frac{P'}{Q'} \right\} \\
&\le \sum_{P/Q\in U} \left( d\left( \frac{P'}{Q'}-\frac{P}{Q} \right)+1\right)\\
&= d\cdot \lambda\left( \bigcup_{P/Q\in U} \left\{ x\middle| \frac{P}{Q} \le x \le \frac{P'}{Q'}\right\}  \right) +\#U.
\end{align*}
To estimate the size of $\# U$, note that
\begin{align}
    \# U &= \sum_{P/Q\in U} 1 \le\sum_{P/Q\in U} \frac{m^2}{QQ'} =  m^2 \sum_{P/Q \in U} \left(  \frac{P'}{Q'}-\frac{P}{Q} \right)\label{eq:U bound1}\\ & = m^2 \cdot \lambda\left( \bigcup_{P/Q\in U} \left\{ x\middle| \frac{P}{Q} \le x \le \frac{P'}{Q'}\right\}  \right)  , \label{eq:U bound2}
\end{align}
where in the inequality we made use of the fact that $Q,Q'\le m$, and in the last equality we made use of  \eqref{eq:consecutive farey denominators}. Since $m^2\le d$, this gives the desired bound.
\end{proof}

The following result of Avdeeva and Bykovskii \cite[Lemma 4]{AB} will be used as well:
\begin{lem}\label{lem:AB Lemma 4}
With the notation above we have
\[
\lambda\left( \bigcup_{P/Q\in U} \left\{ x\middle| \frac{P}{Q} \le x \le \frac{P'}{Q'}\right\}  \right) \ll \frac{1}{m} \sqrt{\# U},
\]
with a uniform implicit constant.
\end{lem}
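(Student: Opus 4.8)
The plan is to reduce the claim to a purely combinatorial statement about the lengths of the intervals cut out by consecutive elements of $\mathcal{F}_m$, and then to understand how those lengths are distributed. Since consecutive Farey intervals are disjoint up to their shared endpoints, the measure on the left is exactly $\sum_{P/Q\in U}\left(\frac{P'}{Q'}-\frac{P}{Q}\right)=\sum_{P/Q\in U}\frac{1}{QQ'}$ by \eqref{eq:consecutive farey denominators}. Writing $N=\#U$, it therefore suffices to show $\sum_{P/Q\in U}\frac{1}{QQ'}\ll \sqrt{N}/m$. The point is that this sum is largest when $U$ consists of the longest available intervals, so the whole problem comes down to counting how many consecutive pairs in $\mathcal{F}_m$ produce a gap of a given size.

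The heart of the argument is the counting estimate
\[
f(t):=\#\left\{\text{consecutive pairs }\tfrac{P}{Q}<\tfrac{P'}{Q'}\text{ in }\mathcal{F}_m:\ \tfrac{1}{QQ'}\ge t\right\}\ll \frac{1}{t^2m^2}.
\]
To prove this I would first recall the mediant fact noted before the lemma: for consecutive fractions one has $Q+Q'>m$, hence $\max(Q,Q')>m/2$. Consequently a gap $\frac{1}{QQ'}\ge t$ forces $\min(Q,Q')<\frac{2}{tm}=:Q_0$. Now each reduced fraction with a fixed denominator $q$ is the smaller-denominator endpoint of at most two consecutive pairs (one on each side, and only when the relevant neighbor has strictly larger denominator), while the number of reduced fractions in $[0,1]$ with denominator below $Q_0$ is $\#\mathcal{F}_{Q_0}\asymp Q_0^2$ by the stated fact $\#\mathcal{F}_m\asymp m^2$. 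Assigning each pair counted by $f(t)$ to its smaller-denominator endpoint gives $f(t)\ll Q_0^2=4/(t^2m^2)$, as claimed.

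With the counting estimate in hand I would order the intervals associated to $U$ by decreasing length $\lambda_1\ge\lambda_2\ge\cdots\ge\lambda_N$. Since at least $k$ of these intervals have length $\ge\lambda_k$, the same is true among all consecutive pairs of $\mathcal{F}_m$, so $f(\lambda_k)\ge k$; feeding this into the estimate above yields $k\ll \lambda_k^{-2}m^{-2}$, that is, $\lambda_k\ll \frac{1}{m\sqrt{k}}$. Summing and using $\sum_{k=1}^N k^{-1/2}\ll\sqrt{N}$ then gives
\[
\sum_{P/Q\in U}\frac{1}{QQ'}=\sum_{k=1}^N\lambda_k\ll\frac{1}{m}\sum_{k=1}^N\frac{1}{\sqrt{k}}\ll\frac{\sqrt{N}}{m},
\]
which is the desired bound.

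The main obstacle is the counting estimate for $f(t)$: everything else is bookkeeping and a standard rearrangement, but that step is exactly where the arithmetic of Farey fractions (the mediant inequality together with the count $\#\mathcal{F}_m\asymp m^2$) must be used. I expect the subtlety to be obtaining the clean factor $1/m$ rather than the $\sqrt{\log m}/m$ that a naive Cauchy--Schwarz on $\sum \frac{1}{QQ'}$ produces; avoiding the logarithm depends on exploiting the full decay profile of $f(t)$ through the rearrangement above, rather than merely a second-moment bound.
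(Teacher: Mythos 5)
The paper does not actually prove this lemma: it is quoted verbatim from Avdeeva--Bykovskii \cite{AB} (their Lemma 4), so there is no internal argument to compare yours against. Your proof is correct and self-contained, and it is built from exactly the ingredients the paper records in this subsection: the identity $\frac{P'}{Q'}-\frac{P}{Q}=\frac{1}{QQ'}$ for consecutive Farey fractions, the mediant observation $\max(Q,Q')\ge m/2$, and the count $\#\mathcal{F}_Q\asymp Q^2$. Your distribution bound $f(t)\ll 1/(t^2m^2)$ is sound: a gap $\ge t$ forces $\min(Q,Q')\le \frac{1/t}{m/2}=2/(tm)$, each fraction of $\mathcal{F}_m$ is an endpoint of at most two consecutive pairs, and the number of reduced fractions with denominator at most $2/(tm)$ is $\ll 1/(t^2m^2)$ (with $f(t)=0$ in the degenerate case $2/(tm)<1$). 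The rearrangement step --- $f(\lambda_k)\ge k$ gives $\lambda_k\ll 1/(m\sqrt{k})$, then summing $k^{-1/2}$ --- correctly converts this tail bound into the $\ell^1$ estimate. Your closing remark is also accurate and worth keeping in mind: the second moment of the Farey gaps is genuinely of order $\log m/m^2$, so Cauchy--Schwarz alone yields $\sqrt{\#U}\,\sqrt{\log m}/m$, and it is precisely the use of the full decay profile of $f$ that removes the logarithm. The net effect of your argument is to make the lemma independent of the external reference \cite{AB}, at the cost of about a page; the paper's choice is simply to cite it as a black box.
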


The following result of Hensley \cite{Hensley} would normally be considered weaker than the result of Baladi and Vall\'{e}e \cite{BV}, but when one is so far from the expected value, it gives far better estimations.
\begin{lem}\label{lem:Hensley length bound}
There exists a constant $c_1$ such that for all sufficiently large $m$ and all $z\ge 0$ (independent of our choice of $m$), we have that
\[
\#\left\{ \frac{p}{q}\in \mathcal{F}_m: \left| L(p/q)-\lambda_{\text{KL}}^{-1} \log m\right| \ge z \sqrt{\log m}\right\} \le m^2 e^{-c_1 z^2}, 
\]
where $L(p/q)$ is the length of the shortest CF expansion of $p/q$ and $\lambda_{\text{KL}}$ is the Khinchin-L\'{e}vy constant $\frac{\pi^2}{12\log 2}$.
\end{lem}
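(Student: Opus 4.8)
The plan is to realize $L(p/q)$ as a stopping time for the Gauss map and to control both of its tails by a single exponential Chebyshev (Chernoff) estimate built on the thermodynamic/transfer-operator machinery, following Hensley. The first step is to translate the count into a sum over blocks: a fraction $p/q\in\mathcal{F}_m$ whose shortest CF expansion has length $n$ corresponds, up to the two-to-one ambiguity of $r_\cdot$ and the last-digit convention, to a block $B\in\mathbb{N}^*$ with $|B|=n$ and $q(B)=q\le m$, so that up to a bounded factor
\[
\#\{p/q\in\mathcal{F}_m : L(p/q)=n\} \asymp \#\{B : |B|=n,\ q(B)\le m\}.
\]
By Proposition \ref{prop:Lebesgue to denominator comparison} together with \eqref{eq:mu nu asymptotic} we have $q(B)^{-2}\asymp\lambda(C_B)\asymp\mu(C_B)$, which lets us pass freely between denominators and measures of cylinders.

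Next I would introduce the parametrized Gauss--Kuzmin--Wirsing transfer operator
\[
\mathcal{L}_s f(x) = \sum_{a=1}^{\infty}\frac{1}{(a+x)^{2s}}\, f\!\left(\frac{1}{a+x}\right),
\]
acting on a space of functions holomorphic on a neighborhood of $[0,1]$. For $s$ in a complex neighborhood of $1$ this operator is nuclear with a simple leading eigenvalue $\lambda(s)$ separated by a spectral gap and depending analytically on $s$; writing $P(s):=\log\lambda(s)$, the gap yields $\sum_{|B|=n}q(B)^{-2s}\asymp e^{nP(s)}$ uniformly for $s$ near $1$. The normalizations I would record are $P(1)=0$ (since $\sum_{|B|=n}\lambda(C_B)=1$), the identification $P'(1)=-2\lambda_{\text{KL}}$ of the Khinchin--L\'evy constant with the drift of $\log q_n$, and the strict convexity $P''(1)=\sigma^2>0$.

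With these in hand the tail bound is a Chernoff estimate. Since $q(B)\le m$ gives $(m/q(B))^{2s}\ge 1$ for every $s>0$, for each $n$
\[
\#\{B : |B|=n,\ q(B)\le m\} \le m^{2s}\sum_{|B|=n}q(B)^{-2s} \asymp m^{2s}e^{nP(s)}.
\]
For the upper tail $n\ge n_0:=\lambda_{\text{KL}}^{-1}\log m+z\sqrt{\log m}$ I would sum over $n\ge n_0$ with a value $s>1$ (so $P(s)<0$ and the geometric series converges), and for the lower tail $n\le\lambda_{\text{KL}}^{-1}\log m-z\sqrt{\log m}$ with $s<1$. In both cases the dominant term is $\exp\bigl(2s\log m+n_0P(s)\bigr)$; writing $s=1+t$ and Taylor-expanding $P$ about $1$ turns the exponent into $2\log m+\tfrac{\sigma^2}{2\lambda_{\text{KL}}}t^2\log m-2\lambda_{\text{KL}}z\sqrt{\log m}\,t+\cdots$, whose optimum occurs at $t\asymp z/\sqrt{\log m}$ and equals $2\log m-c_1z^2$ with $c_1=2\lambda_{\text{KL}}^3/\sigma^2$. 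Exponentiating gives exactly $m^2e^{-c_1z^2}$.

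Two points require care. First, the optimal $t\asymp z/\sqrt{\log m}$ must stay inside the neighborhood on which $P$ is analytic, which confines the argument to the moderate-deviation range $z=o(\sqrt{\log m})$; for larger $z$ one falls back on the deterministic bound $q(B)=q_n(B)\ge F_{n+1}\asymp\phi^{\,n}$ from \eqref{eq:qn relation}, so $L(p/q)\le\log_\phi m+O(1)$, and once $z$ is large enough that $n_0$ exceeds this threshold the set is empty and the bound holds trivially (after shrinking $c_1$ if needed). Second, and this is the main analytic obstacle, one must establish quasi-compactness and a spectral gap for $\mathcal{L}_s$ uniformly for complex $s$ near $1$, the analyticity of $\lambda(s)$, and the precise identity $P'(1)=-2\lambda_{\text{KL}}$; these are exactly the inputs supplied by Hensley's study of the Euclidean algorithm, which I would invoke rather than reprove.
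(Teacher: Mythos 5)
A structural point first: the paper does not prove this lemma at all --- it is stated as a quoted result of Hensley \cite{Hensley}, so there is no internal proof to compare against, and your proposal is really a reconstruction of the deviation estimate behind that citation. The skeleton you set up is the standard and correct one: the block/fraction dictionary, the spectral-gap bound $\sum_{|B|=n}q(B)^{-2s}\asymp e^{nP(s)}$, the normalizations $P(1)=0$, $P'(1)=-2\lambda_{\text{KL}}$, $P''(1)=\sigma^2>0$, and the Chernoff optimization producing the exponent $2\log m-(2\lambda_{\text{KL}}^3/\sigma^2)z^2$. Your handling of the extreme range $z\gtrsim\sqrt{\log m}$ (monotonicity of the count in $z$, eventual emptiness via $q(B)\ge F_{|B|+1}\asymp\phi^{|B|}$, shrinking $c_1$) is also sound.

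There is, however, a genuine gap in the small-to-intermediate range of $z$. When you sum the geometric series $\sum_{n\ge n_0}m^{2s}e^{nP(s)}$ you treat it as comparable to its dominant term $\exp\bigl(2s\log m+n_0P(s)\bigr)$, but the ratio of that series is $e^{P(s)}=1-\Theta(t)$ with $t\asymp z/\sqrt{\log m}$, so the sum carries an unbounded prefactor $1/(1-e^{P(s)})\asymp\sqrt{\log m}/z$ (and likewise for the lower tail). What your argument actually yields is
\[
\#\left\{ \tfrac{p}{q}\in\mathcal{F}_m : L(p/q)-\lambda_{\text{KL}}^{-1}\log m\ \ge\ z\sqrt{\log m}\right\}\ \ll\ \frac{\sqrt{\log m}}{z}\,m^2e^{-cz^2},
\]
and the factor $\sqrt{\log m}/z$ can be absorbed into $e^{-(c-c_1)z^2}$ only when $z^2\gg\log\log m$. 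In the window $C_0\le z\le c_0\sqrt{\log\log m}$ --- which is nonempty for all large $m$ no matter how the constants are tuned --- your bound is weaker even than the trivial bound $\#\mathcal{F}_m\asymp m^2$, while the trivial bound itself gives the lemma only for $z=O(1)$ because $c_1$ must be independent of $m$. So the inequality claimed for \emph{all} $z\ge 0$ is not established. Closing that window is precisely where one needs Hensley's actual main theorem --- a Gaussian approximation for the distribution of $L$ over $\mathcal{F}_m$ with a quantitative error term of size a power of $1/\log m$, obtained from the transfer operator at complex parameters --- and not merely the real-parameter spectral inputs you propose to import. At that point one is invoking the cited result itself, which is exactly what the paper does.
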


From this we derive the following result immediately. We could prove a far stronger asymptotic, but the one here will suffice for our purposes.
\begin{lem}\label{lem:BV variant}
Let $N$ be an integer and $\epsilon>0$. Let \[
m:= \left\lfloor \exp\left((\lambda_{\text{KL}}+\epsilon) N\right)\right\rfloor.
\]
Then the proportion of elements in $\mathcal{F}_m$ that have less than $N$ digits in their shortest  CF expansion is at most
\[
O\left( \frac{1}{\sqrt{N}}\right)
\]
with a uniform implicit constant provided $N$ is sufficiently large (relative to a fixed $\epsilon$).
\end{lem}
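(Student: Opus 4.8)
The plan is to deduce this directly from Lemma \ref{lem:Hensley length bound}; in fact the argument produces exponential (rather than merely polynomial) decay in $N$, which is the source of the authors' remark that a far stronger asymptotic is available and only $O(1/\sqrt{N})$ is needed.

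First I would pin down the center of the length distribution. Since $m=\lfloor \exp((\lambda_{\text{KL}}+\epsilon)N)\rfloor$, taking logarithms gives $\log m = (\lambda_{\text{KL}}+\epsilon)N + o(1)$, the error term being exponentially small because the floor changes $\exp((\lambda_{\text{KL}}+\epsilon)N)$ by at most $1$. Consequently the center $\lambda_{\text{KL}}^{-1}\log m$ appearing in Lemma \ref{lem:Hensley length bound} equals $(1+\epsilon/\lambda_{\text{KL}})N + o(1)$, which exceeds $N$ by a fixed positive multiple of $N$.

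Next I would observe that any $p/q\in\mathcal{F}_m$ with $L(p/q)<N$ lies far below this center. Since $L(p/q)$ is an integer, $L(p/q)<N$ forces $L(p/q)\le N-1$, so $\lambda_{\text{KL}}^{-1}\log m - L(p/q) \ge (\epsilon/\lambda_{\text{KL}})N$ once $N$ is large enough to absorb the $o(1)$ term above. I would then fix the threshold $z=z(N)$ by setting $z\sqrt{\log m}=(\epsilon/\lambda_{\text{KL}})N$; since $\log m\asymp N$ with implied constant depending on $\epsilon$, this forces $z\ge c_2\sqrt{N}$ for some constant $c_2=c_2(\epsilon)>0$. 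With this choice, every such $p/q$ satisfies $|L(p/q)-\lambda_{\text{KL}}^{-1}\log m|\ge z\sqrt{\log m}$, so Lemma \ref{lem:Hensley length bound} bounds the number of them by $m^2 e^{-c_1 z^2}\le m^2 e^{-c_1 c_2^2 N}$.

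Finally, dividing by $\#\mathcal{F}_m\asymp m^2$, I would conclude that the proportion of elements of $\mathcal{F}_m$ with fewer than $N$ digits is $O\!\left(e^{-c_1 c_2^2 N}\right)$, which is certainly $O(1/\sqrt{N})$ (indeed $o(N^{-k})$ for every fixed $k$), giving the claim. The only point demanding care—and the closest thing to an obstacle—is the bookkeeping in the second step: one must check that the $o(1)$ error in $\log m$ together with the integrality of $L(p/q)$ do not erode the fixed margin $(\epsilon/\lambda_{\text{KL}})N$, which is precisely what the hypothesis that $N$ be sufficiently large relative to $\epsilon$ guarantees.
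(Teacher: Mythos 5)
Your proof is correct and follows exactly the route the paper intends: the paper offers no explicit argument, stating only that Lemma~\ref{lem:BV variant} follows ``immediately'' from Lemma~\ref{lem:Hensley length bound}, and your choice of $z\asymp\sqrt{N}$ with the integrality/floor bookkeeping is precisely that derivation. Your observation that the resulting bound is exponentially small also matches the paper's remark that ``a far stronger asymptotic'' is available but $O(1/\sqrt{N})$ suffices.
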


The following related result is Proposition 2.4 in \cite{V16a}. 

\begin{lem}\label{lem:V bad denom bound}
For any $\epsilon>0$ and any positive integer $N$, we have that
\[
\mu\left(\left\{x\in[0,1)\setminus\mathbb{Q}: \left| \frac{\log q_N(x)}{N}-\lambda_{\text{KL}} \right| >\epsilon  \right\}  \right)= O\left(\frac{1}{N} \right),
\]
where the implicit constant is at most dependent on $\epsilon$.
\end{lem}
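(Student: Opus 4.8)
The plan is to realize $\log q_N(x)$ as a Birkhoff sum for the Gauss map $T$ and to control its fluctuations by a second-moment estimate together with Chebyshev's inequality. For irrational $x$ with CF digits $a_1a_2\cdots$ and $B=a_1\cdots a_N$, the branch $T^N\colon C_B\to[0,1)$ is a bijection, so by the mean value theorem and the bounded distortion expressed by Reny\'{i}'s condition \eqref{eq:Renyi-condition} one has $|(T^N)'(x)|\asymp \lambda(C_B)^{-1}$ uniformly in $x\in C_B$ and $N$; combined with Proposition \ref{prop:Lebesgue to denominator comparison} this gives $|(T^N)'(x)|\asymp q_N(x)^2$. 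Taking logarithms, applying the chain rule, and using $|T'(y)|=y^{-2}$, I would obtain
\[
\log q_N(x)=-\sum_{k=0}^{N-1}\log\bigl(T^k x\bigr)+O(1)
\]
with a uniform error. Writing $f(y):=-\log y$, a termwise integration (expanding $(1+x)^{-1}$ as a geometric series) gives $\int_0^1 f\,d\mu=\frac{1}{\log 2}\sum_{m\ge 1}\frac{(-1)^{m-1}}{m^2}=\frac{\pi^2}{12\log 2}=\lambda_{\text{KL}}$, so with $g:=f-\lambda_{\text{KL}}$ we have $\int g\,d\mu=0$ and $\frac{\log q_N(x)}{N}-\lambda_{\text{KL}}=\frac1N\sum_{k=0}^{N-1}g(T^kx)+O(1/N)$.

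Next I would estimate the variance of $S_Ng:=\sum_{k=0}^{N-1}g\circ T^k$. Since $\mu$ is $T$-invariant and $\int g\,d\mu=0$, expanding the square and using invariance gives
\[
\int (S_N g)^2\,d\mu=\sum_{j,k=0}^{N-1}\rho(|j-k|),\qquad \rho(\ell):=\int g\cdot (g\circ T^\ell)\,d\mu.
\]
The Gauss map has exponential decay of correlations, so $|\rho(\ell)|\le C\theta^\ell$ for some $\theta\in(0,1)$; in particular the correlations are summable and the double sum is at most $N\bigl(\rho(0)+2\sum_{\ell\ge1}|\rho(\ell)|\bigr)=O(N)$, with an absolute implied constant. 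For $N$ large enough that the uniform $O(1/N)$ term above is smaller than $\epsilon/2$, the event $\{\,|\log q_N/N-\lambda_{\text{KL}}|>\epsilon\,\}$ is contained in $\{\,|S_Ng|>\tfrac{\epsilon}{2}N\,\}$, and Chebyshev's inequality yields
\[
\mu\Bigl(\bigl\{x:\ \bigl|\tfrac{\log q_N(x)}{N}-\lambda_{\text{KL}}\bigr|>\epsilon\bigr\}\Bigr)\le \frac{4}{\epsilon^2N^2}\int (S_Ng)^2\,d\mu=O\!\left(\frac1N\right),
\]
with the implied constant depending only on $\epsilon$, as desired.

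The technical crux is the variance bound, and the one genuinely delicate point is that $f(y)=-\log y$ is unbounded, with a logarithmic singularity at $0$; it is therefore not of bounded variation and the classical Gauss--Kuzmin--Wirsing spectral-gap estimates for bounded-variation or Lipschitz observables do not apply verbatim. To obtain summable correlations for $-\log y$ I would either run a transfer-operator argument on a Banach space adapted to the logarithmic singularity, or truncate $f$ at height $M$: the estimate for the bounded part $\min(-\log y,M)$ is classical, while the tail $(-\log y-M)^+$ is supported on $(0,e^{-M}]$ with $L^2(\mu)$-norm $\asymp e^{-M/2}$ and so is controlled crudely by the triangle inequality. A naive truncation loses a factor of $\log N$, and recovering the clean $O(1/N)$ bound requires the genuine summability of the correlations of $-\log y$ itself, which is precisely the input underlying the central limit theorem for $\log q_N$.
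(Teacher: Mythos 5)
First, a point of comparison: the paper does not actually prove this lemma---it is imported verbatim as Proposition 2.4 of \cite{V16a}---so your attempt necessarily does more work than the text itself. Your skeleton is sound and is the natural one: the identity $\log q_N(x)=-\sum_{k=0}^{N-1}\log(T^kx)+O(1)$ with a uniform error (your derivation via Reny\'{i}'s condition, Proposition \ref{prop:Lebesgue to denominator comparison} and the chain rule is valid; alternatively the telescoping identity $\prod_{k=0}^{N-1}T^kx=|q_{N-1}(x)x-p_{N-1}(x)|\in\bigl(1/(q_N+q_{N-1}),1/q_N\bigr)$ gives an error of at most $\log 2$), the evaluation $\int_0^1(-\log y)\,d\mu=\pi^2/(12\log 2)=\lambda_{\text{KL}}$, and the Chebyshev reduction are all correct, and they reduce the lemma to the variance bound $\int(S_Ng)^2\,d\mu=O(N)$. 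The genuine gap is exactly where you flag it: the estimate $|\rho(\ell)|\le C\theta^\ell$ for $g=-\log y-\lambda_{\text{KL}}$ is asserted, not proved. Since $g$ is unbounded it is neither of bounded variation nor Lipschitz, so the off-the-shelf spectral-gap theorems do not apply to it; of the two repairs you sketch, the truncation route you yourself concede loses the clean $O(1/N)$, and the adapted-Banach-space route is a named strategy rather than an argument. As submitted, the decisive estimate is an assumption, so this is not yet a proof.

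The gap is closable with classical inputs, because the logarithmic singularity is invisible at the level of cylinders: on a rank-$k$ cylinder $C_B$, whose endpoints are $p_k/q_k$ and $(p_k+p_{k-1})/(q_k+q_{k-1})$, the oscillation of $-\log y$ equals $\bigl|\log\bigl(1+(-1)^k/(p_k(q_k+q_{k-1}))\bigr)\bigr|\le 2/q_k$, which by \eqref{eq:qn relation} and \eqref{eq:CF determinant relation} is $O(2^{-k/2})$ \emph{uniformly} over all rank-$k$ cylinders, including those near $y=0$, which are themselves exponentially short. Hence $g$ is within $O(2^{-k/2})$ in sup norm of its conditional expectation $g_k$ on rank-$k$ cylinders, and the classical exponential $\psi$-mixing of the continued fraction digit sequence under $\mu$ (a consequence of the Gauss--Kuzmin--L\'{e}vy theorem; see Philipp's metrical work or the book of Iosifescu and Kraaikamp) gives, say with $k=\lfloor\ell/3\rfloor$, the bound $|\rho(\ell)|\le\psi(\ell-k)\|g_k\|_1^2+O(2^{-k/2}\|g\|_1)\ll\gamma^\ell$ for some $\gamma<1$. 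This supplies the summable correlations you need, after which your Chebyshev argument goes through verbatim. Alternatively---and this is in effect what the paper does by quoting \cite{V16a}---one may simply cite the known second-moment estimates for $\log q_N$ that underlie its central limit theorem, rather than re-derive them.
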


We will need this in the following form:
\begin{prop}\label{prop:bound on denominator normal}
Let $N$ be an integer and $\epsilon>0$. Let 
\[
m:= \left\lfloor \exp((\lambda_{\text{KL}}+\epsilon)N)\right\rfloor.
\]
Let $\mathcal{G}_m$ denote the subset of  $\mathcal{F}_m$ defined as follows: if $p/q\in \mathcal{F}_m$, $B$ is a block of continued fraction digits such that $r_B=p/q$, and there exists an integer $n\in [1,|B|-2]$ that is a multiple of  $\lfloor \sqrt{N}\rfloor$ such that
\begin{equation}
\left| \frac{\log q_n(B)}{n} - \lambda_{\text{KL}} \right| > \epsilon,\label{eq:prop on denominators inequality}
\end{equation}
then $p/q$ belongs to $\mathcal{G}_m$ as well.
Then
\[
\frac{\#\mathcal{G}_m}{\#\mathcal{F}_m} = O\left( \frac{\log N}{\sqrt{N}} \right).
\]
This result still holds even if the condition that $r_B=p/q$ is replaced with the condition that $r_{B_1B}=p/q$ for some block $B_1$, provided there is a uniform bound on the length of $B_1$.
\end{prop}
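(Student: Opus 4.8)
The plan is to prove the bound one level at a time and then sum. For an integer $n\ge 1$, let $\mathcal{B}_n$ be the collection of all blocks $A$ of length $n$ with $\left|\tfrac{\log q(A)}{n}-\lambda_{\text{KL}}\right|>\epsilon$, and set $E_n=\bigcup_{A\in\mathcal{B}_n}C_A$. Since $q_n(B)$ depends only on the length-$n$ prefix $A$ of $B$, and $q(A)=q_n(B)$ for that prefix, the set of irrational $x\in[0,1)$ with $\left|\tfrac{\log q_n(x)}{n}-\lambda_{\text{KL}}\right|>\epsilon$ is exactly $E_n$. Hence Lemma \ref{lem:V bad denom bound} (with $n$ in place of $N$) together with \eqref{eq:mu nu asymptotic} gives the uniform estimate $\lambda(E_n)=O(1/n)$, the implicit constant depending only on the fixed $\epsilon$. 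By definition every $p/q\in\mathcal{G}_m$ lies in some $E_n$ with $n$ a multiple of $\lfloor\sqrt{N}\rfloor$ and $n\le|B|-2$; since $q(B)\le m$ forces $|B|\le\log_\phi q(B)+O(1)=O(N)$, the relevant levels $n$ range over the $O(\sqrt{N})$ multiples of $\lfloor\sqrt{N}\rfloor$ lying in $[1,O(N)]$. Thus $\mathcal{G}_m\subseteq\bigcup_n(\mathcal{F}_m\cap E_n)$, and it suffices to count Farey fractions in each $E_n$.

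The crux, and the step I expect to require the most care, is a uniform cylinder count $\#(\mathcal{F}_m\cap C_A)=O(m^2\lambda(C_A))$. If $q(A)>m$ then $C_A$ contains no element of $\mathcal{F}_m$, since any fraction whose CF expansion is prefixed by $A$ has denominator at least $q(A)$. If $q(A)\le m$, write each such fraction as $r_{AB_0}$ for a continuation block $B_0$; by Proposition \ref{prop:product of denominators}(2), $q(AB_0)\asymp q(A)\,q(B_0)$, so $q(AB_0)\le m$ forces $q(B_0)\ll m/q(A)=:M$. The map $B_0\mapsto r_{B_0}$ is at most two-to-one onto reduced fractions in $(0,1]$, so the number of admissible $B_0$ is at most $2\,\#\mathcal{F}_M+O(1)\ll M^2+1\ll m^2/q(A)^2\asymp m^2\lambda(C_A)$, using Proposition \ref{prop:Lebesgue to denominator comparison}; here the $O(1)$ boundary contributions are absorbed because $m^2\lambda(C_A)\gg1$ whenever $q(A)\le m$. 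Summing over the pairwise disjoint cylinders $C_A$ with $A\in\mathcal{B}_n$ then yields $\#(\mathcal{F}_m\cap E_n)\ll m^2\sum_{A\in\mathcal{B}_n}\lambda(C_A)=m^2\lambda(E_n)\ll m^2/n$.

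It remains to sum over the admissible levels $n=k\lfloor\sqrt{N}\rfloor$ with $1\le k\ll\sqrt{N}$:
\[
\#\mathcal{G}_m\le\sum_n\#(\mathcal{F}_m\cap E_n)\ll\sum_{k\ll\sqrt{N}}\frac{m^2}{k\lfloor\sqrt{N}\rfloor}\ll\frac{m^2}{\sqrt{N}}\sum_{k\ll\sqrt{N}}\frac1k\ll\frac{m^2\log N}{\sqrt{N}}.
\]
Dividing by $\#\mathcal{F}_m\asymp m^2$ gives the claimed $\tfrac{\#\mathcal{G}_m}{\#\mathcal{F}_m}=O(\log N/\sqrt{N})$.

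For the final assertion, replacing $r_B=p/q$ by $r_{B_1B}=p/q$ with $|B_1|$ bounded by a fixed $\ell$ only moves the bad fractions at level $n$ from $E_n$ into $\bigcup_{A\in\mathcal{B}_n}C_{B_1A}$: a fraction is now flagged when the local prefix $A$ of the tail $B$ lies in $\mathcal{B}_n$, and its full expansion $B_1B$ is then prefixed by $B_1A$. Applying the cylinder count above to the blocks $B_1A$ and invoking the bounded-distortion estimate $\lambda(C_{B_1A})\asymp\lambda(C_{B_1})\lambda(C_A)$ of Proposition \ref{prop:product of denominators}(1) gives $\sum_{A\in\mathcal{B}_n}\lambda(C_{B_1A})\ll\lambda(C_{B_1})\lambda(E_n)\le\lambda(E_n)\ll1/n$, so the per-level bound $O(m^2/n)$ is unchanged and the summation goes through verbatim; the uniform bound $|B_1|\le\ell$ keeps the range of $n$ of size $O(\sqrt N)$ and keeps all implicit constants uniform (now depending on $\ell$).
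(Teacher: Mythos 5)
Your proof of the main estimate is correct, and it reaches the crucial per-level bound $O(m^2/n)$ by a genuinely different route than the paper. The paper never counts fractions inside cylinders directly: it places each bad fraction in the Farey interval to its successor, uses Remark \ref{rem:Farey interval to CF interval} together with the hypothesis $n\le|B|-2$ to show this interval lies inside a rank-$n$ cylinder with bad denominator growth, and then converts the total Lebesgue measure of these disjoint Farey intervals into a count of fractions via the spacing argument of \eqref{eq:U bound1}--\eqref{eq:U bound2}. You instead establish the uniform cylinder count $\#(\mathcal{F}_m\cap C_A)=O(m^2\lambda(C_A))$ from scratch, using continuant multiplicativity (Proposition \ref{prop:product of denominators}) to force $q(B_0)\ll m/q(A)$ for any admissible continuation $B_0$, the at-most-two-to-one map $B_0\mapsto r_{B_0}$ into a Farey set of that height, and Proposition \ref{prop:Lebesgue to denominator comparison}. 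Both proofs then rest on Lemma \ref{lem:V bad denom bound} (giving $\lambda(E_n)=O(1/n)$) and the identical summation over levels $n$, so the difference is only in how measure is turned into a count; your version avoids the Farey-interval machinery entirely and is, if anything, more self-contained.

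However, the final assertion (the $B_1$ variant) as you wrote it has a gap. For a \emph{fixed} $B_1$ you correctly obtain a count of order $m^2\lambda(C_{B_1})\lambda(E_n)$, but you then discard the factor $\lambda(C_{B_1})$ (the step ``$\le\lambda(E_n)$'') and claim the per-level bound $O(m^2/n)$ is unchanged and the summation goes through verbatim. It does not, because the fractions flagged at level $n$ arise from \emph{all} admissible prefixes $B_1$, and there are infinitely many blocks of each length $\le\ell$ (CF digits are unbounded); adding a bound of $O(m^2/n)$ once per $B_1$ diverges. The repair uses exactly the factor you threw away: keep $\lambda(C_{B_1})$ and sum it, noting that for each fixed $k\le\ell$ the rank-$k$ cylinders partition $[0,1)$, so $\sum_{B_1\colon|B_1|\le\ell}\lambda(C_{B_1})\le\ell+1$; the total per-level count is then at most a constant (depending on $\ell$) times $m^2\lambda(E_n)\ll m^2/n$, and the rest of your argument applies. (The paper handles the same point by fixing $|B_1|=k$ and invoking invariance of the Gauss measure, $\mu\left(\bigcup_{|B_1|=k}\bigcup_{A}C_{B_1A}\right)=\mu\left(T^{-k}\bigcup_{A}C_{A}\right)=\mu\left(\bigcup_{A}C_{A}\right)$, and finally summing over the boundedly many values of $k$.)
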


\begin{rem}
Since reversing the order of the continued fraction expansion will take the two-fold copy of $\mathcal{F}_m$ (corresponding to both possibilities of CF expansion for any given fraction) to a two-fold copy of  $\mathcal{F}_m$, the above proposition still holds with $p/q$ replaced by $p^*/q$ in \eqref{eq:prop on denominators inequality} 
\end{rem}

\begin{proof}[Proof of Proposition \ref{prop:bound on denominator normal}]
Let $n\in\mathbb{N}$ be a fixed  multiple of $\lfloor\sqrt{N}\rfloor$ and let $k$ be a fixed non-negative integer. Then consider how many blocks $B$ satisfy $r_{B_1B}\in \mathcal{F}_m$ for some $B_1$ with $|B_1|=k$, $n\le |B|-2$, and
\[
\left| \frac{\log q_n(B)}{n} - \lambda_{\text{KL}} \right| > \epsilon.
\]
By Remark \ref{rem:Farey interval to CF interval} and the assumption that $n\le |B|-2$, we see that the interval from $r_{B_1B}$ to the succeeding Farey fraction in $\mathcal{F}_m$ is completely contained in the rank-$n$ cylinder $C_{B_1B'}$, where $B'$ is the length-$n$ prefix of $B$. By arguing as in \eqref{eq:U bound1} and \eqref{eq:U bound2}, we see that the number of such blocks $B$ is at most a constant times $m^2$ times the Lebesgue measure of the corresponding Farey fraction intervals. These intervals are all contained in rank-$n$ cylinders $C_{B_1B'}$ where \eqref{eq:prop on denominators inequality} holds. If we let $\mathcal{B}$ denote the set of all such $B'$'s then note that
\[
\mu\left( \bigcup_{|B_1|=k} \bigcup_{B'\in\mathcal{B}} C_{B_1B'} \right) = \mu \left( T^{-k} \bigcup_{B'\in\mathcal{B}} C_{B'}\right) = \mu\left( \bigcup_{B'\in\mathcal{B}} C_{B'}\right).
\]Recalling that Lebesgue and Gauss measures differ by at most a multiplicative constant, we see that the Lebesgue measure of the intervals $C_{B_1B'}$ must be at most $O(n^{-1})$ by Lemma \ref{lem:V bad denom bound}. Thus there are at most $O(m^2/n)$ such blocks, and hence $O(m^2/n)$ corresponding fractions in $\mathcal{G}_m$.

The largest $n$ that could possibly be considered is at most $a\log m$ for some $a>0$. This is because the longest block would consist of just repeating $1$'s, and the denominator of $r_{(1)^k}$ is on the order of $\phi^{k}$ as seen above.

By summing over all relevant multiples of $\lfloor \sqrt{N}\rfloor$, we get that
\begin{align*}
    \frac{\#\mathcal{G}_m}{\#\mathcal{F}_m} &= O\left(  \frac{1}{\#\mathcal{F}_m}\sum_{\substack{n\le a \log m\\ \lfloor \sqrt{N}\rfloor | n }} \frac{m^2}{n}  \right)\\
    &= O\left( \frac{m^2}{\#\mathcal{F}_m\lfloor \sqrt{N}\rfloor} \sum_{k\le a\log m/\lfloor \sqrt{N}\rfloor} \frac{1}{k}\right)\\
    &= O\left( \frac{m^2}{\#\mathcal{F}_m\lfloor \sqrt{N}\rfloor} \cdot \log (a\log m/\lfloor \sqrt{N}\rfloor)\right),
\end{align*}
and  recalling that $\log m \asymp N$ and $\#\mathcal{F}_m\asymp m^2$, this is $O(\log N/ \sqrt{N})$. Finally we may sum over the possible values of $k$, which are possible lengths of $B_1$, but since we assumed this was uniformly bounded, it does not alter the $O(\log N/\sqrt{N})$ bound except in the constant.
\end{proof}

\section{$(\epsilon,u)$-normality and its variants}

Fix a digital system, either a base-$b$ system or the regular continued fraction expansion. Let $w$ be a finite or infinite word in our digital system and let $w^{(\ell)}=w_1w_2\dots w_\ell$ be the prefix consisting of the first $\ell$ digits (assuming that $\ell\le |w|$). Let $u$ be any finite word in our digital system. We will let 
\[
\nu_u(w^{(\ell)}) \text{ or, equivalently, } \nu_u(w,\ell)
\]
denote the number of appearances of $u$ in $w^{(\ell)}$. Note that for a finite word $w$, $\nu_u(w):=\nu_u(w,|w|)$. With the notation of the introduction, we have that $\nu_u(w(x),\ell) = N_u(x,\ell)$.

We will say that a finite word $w$ is $(\epsilon,u)$-normal (with respect to our digital system) for some $\epsilon>0$ if
\[
\left| \frac{\nu_u(w)}{|w|-|u|+1}-m(u)\right| \le \epsilon,
\]
where $m(u)$ is the expected frequency of the word $u$ in whatever system we are considering. This will be $b^{-|u|}$ in base-$b$ and will be $\mu(C_u)$ in the continued fraction system. In other words, $w$ is $(\epsilon,u)$-normal if the frequency which $u$ appears in $w$ is within $\epsilon$ of the desired frequency $m(u)$.

Importantly, in this paper, if $|w|<|u|$, then a word $w$ is automatically considered to \emph{not} be $(\epsilon,u)$-normal.

Let $\mathcal{U}$ be a collection of words. We will say that a word $w$ is $(\epsilon,\mathcal{U})$-normal if it is $(\epsilon,u)$-normal for each $u\in \mathcal{U}$. We will say that a word $w$ is $(\epsilon,k)$-normal if it is $(\epsilon,u)$-normal for every word $u$ with $|u|=k$.

We will say that a string is $(\epsilon,u,m)$-normal if it is $(\epsilon,u)$-normal for each prefix of $u$ whose length is a multiple of $m$. We can extend this definition likewise to $(\epsilon,\mathcal{U},m)$-normal and $(\epsilon,k,m)$-normal strings. In a base-$b$ system, we will say that a string $s$ is $(\epsilon,k,m)^*$-normal if both $s$ and $s^*$ are $(\epsilon,k,m)$-normal.

We make use of an additional definition for continued fractions. We say a block $B$ is $(\epsilon,m)$-denominator normal if for every prefix $B'$ of $B$ whose length is a multiple of $m$, we have
\[
\left| \frac{\log q(B')}{m}-\lambda_{\text{KL}} \right| < \epsilon.
\]

\subsection{Continued fraction normality}

The following result, which bounds the Gauss measure of the set of points whose first $n$ CF digits form a non-$(\epsilon,A)$-normal block, is due to Adrian-Maria Scheerer \cite{Scheerer}. 
\begin{lem}\label{lem:Scheerer}
Let $\epsilon>0$ and fix a block $A$ of $\ell$ positive integers. There is a constant $\eta_{CF}(\epsilon,A)>0$ such that for $n\ge 2(\ell+1)$, we have that
\[
\mu(E_{CF}^c(\epsilon,A;n)) \le \exp\left( -\eta_{CF}(\epsilon,A) \frac{n}{\log n}\right),
\]
where $\mu$ is the Gauss measure, and $E_{CF}(\epsilon,A;n)$ is the set of $x\in [0,1)$ whose first $n$ CF digits form an $(\epsilon,A)$-normal block.
\end{lem}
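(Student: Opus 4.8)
The plan is to prove the exponential-type concentration bound of Lemma~\ref{lem:Scheerer} by realizing the counting function $\nu_A(x,n)$ as a Birkhoff sum for the Gauss map and then invoking a large deviation estimate for this (mildly) dependent dynamical system. First I would fix the block $A$ of length $\ell$ and write the indicator of ``$A$ begins at position $i$'' as the characteristic function of the cylinder $C_A$ transported by the shift, so that
\begin{equation}
\nu_A(x,n) = \sum_{i=0}^{n-\ell} \mathbf{1}_{C_A}(T^i x) + O(1).
\end{equation}
The expected frequency is exactly $\mu(C_A)$ since $\mu$ is $T$-invariant and ergodic, and the pointwise ergodic theorem guarantees $\nu_A(x,n)/n \to \mu(C_A)$ almost everywhere. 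The content of the lemma is the \emph{quantitative} rate: we need the measure of the bad set to decay like $\exp(-\eta\, n/\log n)$, which is subgeometric but still strong enough for the Borel--Cantelli-style arguments used later in the construction.

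The key input is that the Gauss map has strong mixing properties — it is exact and the transfer operator has a spectral gap on a suitable function space (e.g.\ functions of bounded variation or Lipschitz functions), which is precisely the dynamical content of R\'enyi's condition \eqref{eq:Renyi-condition} recorded earlier. From this one derives exponential decay of correlations for the indicator $\mathbf{1}_{C_A}$, and hence a large-deviation principle for the Birkhoff averages $S_n f/n$ where $f = \mathbf{1}_{C_A} - \mu(C_A)$. The standard route is to control the moment generating function $\int e^{tS_n f}\,d\mu$ via iterated transfer operators, establishing that its logarithm grows linearly in $n$ with a rate function that is strictly convex and vanishes only at $t=0$; a Markov-inequality/Chernoff bound then converts this into the exponential tail
\begin{equation}
\mu\bigl(\{x : |\nu_A(x,n)/n - \mu(C_A)| > \epsilon\}\bigr) \le \exp(-\eta_0(\epsilon,A)\, n)
\end{equation}
for an appropriate rate $\eta_0 > 0$. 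The refinement to $n/\log n$ rather than $n$ in the statement is a concession that accounts for the unbounded digit set: the Gauss map does not have uniformly bounded distortion in the naive sense because the digits $a_i$ are unbounded, so one must truncate the alphabet at a level growing with $n$ and absorb the contribution of large digits (whose cylinders have small measure) into a logarithmic loss in the exponent.

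The main obstacle I anticipate is exactly this unboundedness of the CF alphabet, which obstructs a clean application of the Ruelle--Perron--Frobenius machinery that works cleanly for finite-alphabet subshifts of finite type. The cleanest way around this is to restrict to the compact subsystem where all digits are at most some $M = M(n)$, estimate the measure of orbits that ever visit a digit exceeding $M$ (which is small, on the order of $n/M$ by a union bound since $\mu(\{a_1 > M\}) \asymp 1/M$), and apply the large-deviation bound on the truncated system where a uniform spectral gap holds. Optimizing the truncation level $M$ against the error from excluded large digits is what produces the $n/\log n$ rate rather than a pure linear rate; the $\log n$ reflects the entropy cost of the truncation. Since this is Scheerer's result \cite{Scheerer} we may simply cite it, but the sketch above is how I would reconstruct the proof: realize the count as a Birkhoff sum, establish decay of correlations from R\'enyi's condition, run a Chernoff bound through the transfer operator, and handle the unbounded alphabet by a truncation argument that costs a logarithmic factor in the exponent.
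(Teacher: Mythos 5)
The paper offers no proof of Lemma~\ref{lem:Scheerer} at all: it is imported verbatim as a known result of \cite{Scheerer}. So your closing remark that ``we may simply cite it'' is exactly what the paper does, and as a proof-by-citation your proposal matches the paper. Judged as a standalone argument, however, your sketch has one genuine gap and rests on two misconceptions that are worth spelling out.

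The gap is in the truncation bookkeeping. You bound the measure of the set of points having some digit exceeding $M$ among the first $n$ by $O(n/M)$; that is correct, but it is only \emph{polynomially} small in $M$, while the target bound is of stretched-exponential size $\exp(-\eta\, n/\log n)$. To absorb the truncation error you would need $M \ge n\exp(\eta\, n/\log n)$, i.e.\ an exponentially large alphabet, at which point asserting a ``uniform spectral gap on the truncated system'' is exactly as hard as establishing a spectral gap for the full, untruncated operator --- the truncation is circular and the claimed optimization of $M$ against the excluded digits cannot produce the stated rate (it produces only polynomial decay). The first misconception is that the unbounded alphabet is an obstruction here at all: the block $A$ is \emph{fixed}, so the summand in your Birkhoff sum is $\mathbf{1}_{C_A}$, the indicator of a single interval, hence a function of bounded variation; the transfer operator of the Gauss map (the Gauss--Kuzmin--Wirsing operator) has a spectral gap on the space of BV functions, so the Nagaev--Guivarc'h/Chernoff route you outline applies directly to the full system and yields the genuinely exponential bound $\exp(-c(\epsilon,A)\,n)$. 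This is \emph{stronger} than the lemma and implies it (take $\eta = c\log 2$, say, using $n\ge 4$), so your primary route closes with no truncation needed; the $n/\log n$ in the statement is simply the lossier form that Scheerer's own argument produces, not an intrinsic ``entropy cost'' of the countable alphabet. The second misconception is the claim that exponential decay of correlations is derived ``from R\'enyi's condition \eqref{eq:Renyi-condition}'': bounded distortion gives only quasi-independence of cylinders up to a fixed multiplicative constant $L$ (a mixing coefficient that is $O(1)$, not $o(1)$), which is far too weak to give decay of correlations or a large-deviation rate; the spectral gap is a separate, stronger (though classical) input obtained from a Lasota--Yorke inequality, not from \eqref{eq:Renyi-condition}.
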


We need a slightly refined version of Scheerer's estimate, which bounds the Gauss measure of the set of points whose first $Kn$ CF digits form a non-$(\epsilon,A,n)$-normal block.

\begin{lem}\label{lem:Scheerer-extended}
Let $\epsilon>0$ and let $\mathcal{A}$ be a finite collection of blocks. There exist constants $\eta=\eta(\epsilon,\mathcal{A})>0$ and $\xi=\xi(\epsilon,\mathcal{A})>0$, such that for $n$ satisfying
\[
n \ge \max\left\{ 2\left( \max_{A\in \mathcal{A}} |A|+1\right), 6, \frac{3}{\eta}\right\},
\] we have that, for any integer $K\ge 1$ or $K=\infty$, \[
\mu\left(\bigcup_{k=1}^K E_{CF}^c(\epsilon,\mathcal{A};kn)\right) \le \xi \exp\left( -\eta \frac{n}{\log n}\right),
\]
where $E_{CF}(\epsilon,\mathcal{A};kn)$ is the set of $x\in[0,1)$ whose first $n$ CF digits form an $(\epsilon,\mathcal{A})$-block.
\end{lem}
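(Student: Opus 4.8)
The plan is to reduce everything to Scheerer's single-block estimate (Lemma \ref{lem:Scheerer}) in two stages: first replace the collection $\mathcal{A}$ by a single block, then handle the union over $k$ by summing a rapidly decaying series. Throughout, set $\eta = \eta(\epsilon,\mathcal{A}) := \min_{A\in\mathcal{A}} \eta_{CF}(\epsilon,A) > 0$, which is positive because $\mathcal{A}$ is finite.

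First I would observe that a block fails to be $(\epsilon,\mathcal{A})$-normal exactly when it fails to be $(\epsilon,A)$-normal for some $A\in\mathcal{A}$, so that $E_{CF}^c(\epsilon,\mathcal{A};m) = \bigcup_{A\in\mathcal{A}} E_{CF}^c(\epsilon,A;m)$. For any $m \ge 2(\max_{A\in\mathcal{A}}|A|+1)$, Lemma \ref{lem:Scheerer} applies to each $A$, and the union bound gives
\[
\mu\bigl(E_{CF}^c(\epsilon,\mathcal{A};m)\bigr) \le \sum_{A\in\mathcal{A}} \mu\bigl(E_{CF}^c(\epsilon,A;m)\bigr) \le \#\mathcal{A}\cdot \exp\Bigl(-\eta\,\frac{m}{\log m}\Bigr).
\]

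Next, since the measures are nonnegative it suffices to treat $K=\infty$, the bound for finite $K$ following at once. Applying the previous display with $m = kn$ (legitimate since $kn \ge n \ge 2(\max|A|+1)$ for every $k\ge 1$) and using the union bound once more yields
\[
\mu\Bigl(\bigcup_{k=1}^\infty E_{CF}^c(\epsilon,\mathcal{A};kn)\Bigr) \le \#\mathcal{A}\sum_{k=1}^\infty \exp\Bigl(-\eta\,\frac{kn}{\log(kn)}\Bigr).
\]
Thus everything reduces to showing that the series on the right is at most a constant multiple of its leading term $\exp(-\eta\,n/\log n)$; taking $\xi = C\cdot\#\mathcal{A}$ for the resulting constant $C$ then finishes the proof. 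The key elementary estimate is that, for $n$ past a small absolute threshold (this is where the hypotheses $n\ge 6$ and $n\ge 3/\eta$ are used) and every $k\ge 1$,
\[
\frac{kn}{\log(kn)} - \frac{n}{\log n} \ge \frac{(k-1)n}{2\log(kn)},
\]
which one obtains by integrating the derivative of $t\mapsto tn/\log(tn)$ from $1$ to $k$ and bounding $\log(tn)$ from below.

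Finally I would bound the tail $\sum_{k\ge 1}\exp(-\eta[(kn/\log(kn)) - (n/\log n)])$ by a constant uniformly in $n$. The main obstacle, and the only genuinely delicate point, is that $kn/\log(kn)$ grows slightly slower than linearly in $k$, so the series is not dominated by a single geometric series across all $k$; instead I would split at $\log(kn)=n$. For $2\le k\le e^n/n$ the exponent difference is at least $(k-1)/2$, giving a convergent geometric contribution; for $k > e^n/n$ one has $\log(kn)\le 2\log k$, so the exponent difference exceeds $c\sqrt{k}$ for some $c>0$ and the remaining tail is not merely bounded but tends to $0$ as $n\to\infty$. Summing the two ranges produces the constant $C$, and collecting the estimates gives the claimed bound with $\eta$ and $\xi$ as defined above.
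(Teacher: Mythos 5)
Your proposal is correct in substance and shares its skeleton with the paper's proof: both take $\eta=\min_{A\in\mathcal{A}}\eta_{CF}(\epsilon,A)$, apply the union bound over $\mathcal{A}$ and then over $k$, and reduce everything to showing that $\sum_{k\ge 1}\exp\bigl(-\eta\,kn/\log(kn)\bigr)$ is at most a constant times its first term. Where you diverge is in how that series is tamed. The paper uses the integral test (noting the integrand decreases for $x\ge 1$ once $n\ge 6$) and then a clever exact antiderivative: it multiplies the integrand by $3\bigl(1-\tfrac{1}{\eta nx}-\tfrac{1}{\log^2(xn)}\bigr)\ge 1$ (this is where $n\ge 3/\eta$ and $n\ge 6$ enter) so that the integral telescopes to $3\exp(-\eta n/\log n)(\log n+1)/(\eta n)$, yielding the explicit constant $\xi=|\mathcal{A}|(1+3/\eta)$. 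You instead compare exponents termwise via $g(k)-g(1)=\int_1^k \frac{n}{\log(tn)}\bigl(1-\frac{1}{\log(tn)}\bigr)\,dt$ and split the sum at $\log(kn)=n$: a geometric-type contribution with ratio $e^{-\eta/2}$ below the split, and a $\sum e^{-c\sqrt{k}}$ tail above it (where $n\ge 3/\eta$ makes $c$ absolute, since $k>e^n/n\ge n$ gives $\log(kn)\le 2\log k$). This is more elementary and robust---no antiderivative needs to be guessed---at the cost of a less explicit $\xi$; both constants depend on $\eta$, which the lemma permits. One small quantitative slip: your displayed inequality with the factor $\tfrac12$ requires $1-1/\log n\ge \tfrac12$, i.e.\ $n\ge e^2\approx 7.39$, whereas the hypothesis only gives $n\ge 6$, where $1-1/\log 6\approx 0.44$. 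This is harmless: replace $\tfrac12$ by $\tfrac25$ (valid for all $n\ge 6$) and both ranges of your splitting argument go through with trivially adjusted constants.
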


\begin{proof}
It is clear from Lemma \ref{lem:Scheerer} that
\[
\mu(E_{CF}^c(\epsilon,\mathcal{A};m))\le\sum_{A\in \mathcal{A}} \mu(E_{CF}^c(\epsilon,A;m)) \le  |\mathcal{A}| \exp \left( -\eta \frac{m}{\log m} \right),
\]
where $\eta=\min_{A\in \mathcal{A}} \eta_{CF}(\epsilon,A)$ and $m\ge 2(\max_{A\in\mathcal{A}}|A|+1)$. Consequently, we have the following:
\begin{align*}
    \mu\left( \bigcup_{k=1}^\infty E_{CF}^c(\epsilon,\mathcal{A};kn)\right) &\le \sum_{k=1}^\infty \mu\left( E_{CF}^c(\epsilon,\mathcal{A};kn)\right)\\
    &\le \sum_{k=1}^\infty |\mathcal{A}| \exp\left( -\eta \frac{kn}{\log kn} \right)
\end{align*}

We may estimate this sum using the integral test. Notably, the function
\[
f(x) = e^{-\eta \frac{xn}{\log xn}}
\]
has a negative derivative for $x>e/n$. Since we have assumed that $n\ge 6$, this is true for all $x\ge 1$. Therefore, we have that
\[
    \mu\left( \bigcup_{k=1}^\infty E_{CF}^c(\epsilon,\mathcal{A};kn)\right) \le |\mathcal{A}| \exp\left(-\eta \frac{n}{\log n}\right) +|\mathcal{A}|\int_1^\infty \exp\left(-\eta \frac{xn}{\log (xn)}\right) \ dx.
\]

Next, by our assumption on the size of $n$, it is easy to check that $\eta n \ge 3$ and $\log^2 n\ge 3$. Therefore,
\[
3\left( 1- \frac{1}{\eta n x} - \frac{1}{\log^2(xn)}\right) \ge 1
\]
for $x\ge 1$.

In particular, we have that
\begin{align*}
    &\mu\left( \bigcup_{k=1}^\infty E_{CF}^c(\epsilon,\mathcal{A};kn)\right)\\ &\qquad\le |\mathcal{A}| \exp\left(-\eta \frac{n}{\log n}\right)\\ &\qquad\qquad +|\mathcal{A}|\int_1^\infty \exp\left(-\eta \frac{xn}{\log (xn)}\right)\cdot 3\left( 1- \frac{1}{\eta n x} - \frac{1}{\log^2(xn)}\right) \ dx\\ 
    &\qquad=  |\mathcal{A}| \exp\left(-\eta \frac{n}{\log n}\right) \\ &\qquad\qquad +3|\mathcal{A}|\int_1^\infty \frac{d}{dx} \left( \frac{\exp\left( -\eta\frac{xn}{\log(xn)}\right)( \log(xn)+1)}{-\eta n}\right) \ dx\\ 
    &\qquad= |\mathcal{A}| \exp\left(-\eta \frac{n}{\log n}\right) +3|\mathcal{A}|\frac{\exp\left( -\eta \frac{n}{\log n}\right) (\log n + 1)}{\eta n}\\
    &\qquad\le \xi \exp \left( -\eta \frac{n}{\log n}\right),
\end{align*}
where $\xi = |\mathcal{A}|\left( 1+\frac{3}{\eta}\right)$. In the final line, we used that $n\ge \log n + 1$ for all $n>0$. This proves the desired relation. \end{proof}

We also need the following, more intricate result.

\begin{lem}\label{lem:Scheerer reversed}
Let $\epsilon>0$ and $\mathcal{A}$ be a collection of blocks. Then under the same assumptions as Lemma \ref{lem:Scheerer-extended}, we have that, for any $K\ge n$,  \[
\mu\left( E^c_{CF}(\epsilon,\mathcal{A},n;K)^*\right) \le K \xi \exp\left( -\eta \frac{n}{\log n}\right),
\]
where $E^c_{CF}(\epsilon,\mathcal{A},n;K)^*$ is the union of all cylinder sets $C_B$ where $n\le |B|\le K$ and $B^*$ is \emph{not} $(\epsilon,\mathcal{A},n)$-normal.
\end{lem}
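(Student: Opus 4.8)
The plan is to reduce the reversed statement to the non-reversed Lemma \ref{lem:Scheerer-extended} by slicing the set according to the length of the defining block and then applying the reversal-invariance of the Gauss measure (Lemma \ref{lem:C_B versus C_B^*}) one length at a time.

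First I would write the set as a union over block lengths. Since every block $B$ in the definition of $E^c_{CF}(\epsilon,\mathcal{A},n;K)^*$ satisfies $n\le |B|\le K$, set for each fixed $\ell$ with $n\le \ell\le K$
\[
F_\ell := \bigcup_{\substack{|B|=\ell\\ B^*\text{ not }(\epsilon,\mathcal{A},n)\text{-normal}}} C_B,
\]
so that $E^c_{CF}(\epsilon,\mathcal{A},n;K)^* = \bigcup_{\ell=n}^K F_\ell$. For a fixed $\ell$ the cylinders $C_B$ with $|B|=\ell$ are pairwise disjoint, so $\mu(F_\ell)=\sum \mu(C_B)$, the sum being over the relevant $B$.

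Next I would apply Lemma \ref{lem:C_B versus C_B^*}, which gives $\mu(C_B)=\mu(C_{B^*})$. Substituting $B'=B^*$, which is a bijection on length-$\ell$ blocks, turns the condition ``$B^*$ not $(\epsilon,\mathcal{A},n)$-normal'' into ``$B'$ not $(\epsilon,\mathcal{A},n)$-normal'', whence
\[
\mu(F_\ell)=\sum_{\substack{|B'|=\ell\\ B'\text{ not }(\epsilon,\mathcal{A},n)\text{-normal}}}\mu(C_{B'}) = \mu\left(\bigcup_{\substack{|B'|=\ell\\ B'\text{ not }(\epsilon,\mathcal{A},n)\text{-normal}}} C_{B'}\right).
\]
Unwinding the definition of $(\epsilon,\mathcal{A},n)$-normality, a length-$\ell$ prefix fails to be $(\epsilon,\mathcal{A},n)$-normal exactly when one of its prefixes of length $kn$, for some $1\le k\le \lfloor \ell/n\rfloor$, fails to be $(\epsilon,\mathcal{A})$-normal; hence the last set equals $\bigcup_{k=1}^{\lfloor \ell/n\rfloor} E^c_{CF}(\epsilon,\mathcal{A};kn)$. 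Since $\ell\ge n$ this union is nonempty, and Lemma \ref{lem:Scheerer-extended} (whose hypotheses on $n$ are exactly those assumed here) bounds its measure by $\xi\exp(-\eta n/\log n)$.

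Finally I would sum over $\ell$: there are at most $K-n+1\le K$ values of $\ell$, each contributing at most $\xi\exp(-\eta n/\log n)$, which gives the claimed bound $K\xi\exp(-\eta n/\log n)$. The one genuinely important point, and the reason the factor $K$ appears with this method, is that reversal converts the nested prefix structure exploited in Lemma \ref{lem:Scheerer-extended} into a family of conditions on the ``top'' of the expansion near position $\ell$, and these do \emph{not} nest as $\ell$ varies; so, unlike in Lemma \ref{lem:Scheerer-extended}, the length slices cannot be absorbed into a single geometric sum and must be controlled by a crude union bound over $\ell$. The things to check carefully are that the reversal bijection respects the normality condition length-by-length and that disjointness of fixed-length cylinders is invoked only \emph{after} slicing by $\ell$.
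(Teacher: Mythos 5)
Your proof is correct and is essentially the paper's own argument: slice by block length, use disjointness of fixed-length cylinders together with Lemma \ref{lem:C_B versus C_B^*} to pass from $C_B$ to $C_{B^*}$, bound each length-slice by Lemma \ref{lem:Scheerer-extended}, and finish with a union bound over the at most $K$ lengths. Your explicit identification of the reversed slice with $\bigcup_{k=1}^{\lfloor \ell/n\rfloor} E^c_{CF}(\epsilon,\mathcal{A};kn)$, and your remark on why the slices do not nest and hence force the factor $K$, merely spell out steps the paper leaves implicit.
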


\begin{proof}
Let $\mathcal{B}_k$ denote the set of all blocks $B$ with $|B|=k$ and $B^*$ not $(\epsilon,\mathcal{A},n)$-normal, with $n\le k \le K$. Since the cylinder sets corresponding to different blocks $B$ in $\mathcal{B}_k$ are disjoint, by Lemma \ref{lem:C_B versus C_B^*} we have that
\[
\mu\left( \bigcup_{B\in\mathcal{B}_k} C_B\right) = \mu\left(\bigcup_{B\in\mathcal{B}_k} C_{B^*}\right).
\]
By applying Lemma \ref{lem:Scheerer-extended}, we see that the latter is at most $\xi \exp(-\eta n/\log n)$. Summing over all possible values for $|B|$ gives the desired result. 
\end{proof}

\subsection{base-$b$ normality}

We will make use the following result, in the form seen in Pollack and Vandehey \cite[Proposition 2]{PV}, although it derives from an earlier result of Copeland and Erd\H{o}s \cite{CE}.

\begin{lem}\label{lem:Copeland-Erdos}
Fix a base $b$. Let $\epsilon>0$ and $k\in\mathbb{N}$ be fixed. There exists $\delta=\delta(\epsilon,k,b)$, such that the number of base-$b$ strings of length $\ell$ that are not $(\epsilon,k)$-normal to base-$b$ is at most $b^{\ell(1-\delta)}$ for all sufficiently large $\ell$.
\end{lem}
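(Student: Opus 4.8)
The plan is to follow the classical Copeland--Erd\H{o}s counting argument, which is fundamentally a large-deviation estimate for the empirical frequency of a fixed word $u$ of length $k$ along a random base-$b$ string. Fix a single word $u$ with $|u|=k$ and set $p=b^{-k}=m(u)$. The heuristic is that if we break a length-$\ell$ string $s=s_1s_2\dots s_\ell$ into consecutive disjoint blocks of length $k$, then each block independently equals $u$ with probability $p$, and by a Chernoff/Bernstein bound the proportion of disjoint blocks equal to $u$ deviates from $p$ by more than (a suitable multiple of) $\epsilon$ only on an exponentially small fraction of all strings. Since there are $k$ distinct ways to partition the string into disjoint blocks (according to the starting offset $0,1,\dots,k-1$), and $(\epsilon,u)$-normality concerns the count of \emph{all} occurrences of $u$ (overlapping included), the total occurrence count $\nu_u(s)$ is the sum over these $k$ offset-partitions of the disjoint-block counts. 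If $s$ fails to be $(\epsilon,u)$-normal, then at least one of the $k$ offset-partitions must have its disjoint-block frequency off by at least (roughly) $\epsilon$ from $p$, so a union bound over the $k$ offsets controls everything.

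Concretely, first I would reduce to counting, for a fixed offset, the number of length-$r$ sequences of $k$-blocks (with $r\approx \ell/k$) in which the number of blocks equal to $u$ differs from $pr$ by more than $\epsilon' r$ for an appropriate $\epsilon'$ derived from $\epsilon$. This count is $\sum_{|j-pr|>\epsilon' r}\binom{r}{j}(b^k-1)^{r-j}$, and the standard binomial large-deviation estimate gives that this is at most $(b^k)^{r(1-\delta_0)}=b^{\ell(1-\delta_0)+o(\ell)}$ for some $\delta_0=\delta_0(\epsilon,k,b)>0$ depending on the exponential rate in the Chernoff bound, uniformly for $\ell$ large. Multiplying by the number of offsets $k$ and by the bounded contribution of the ``leftover'' digits at the ends of each partition (which number fewer than $2k$ digits, contributing a factor of at most $b^{2k}$, absorbed into the $o(\ell)$ term) keeps the bound of the form $b^{\ell(1-\delta)}$ after shrinking $\delta_0$ slightly to a final $\delta$.

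The passage from $(\epsilon,u)$-normality to $(\epsilon,k)$-normality is then a second union bound: there are exactly $b^k$ words $u$ of length $k$, a number independent of $\ell$, so the total count of strings failing to be $(\epsilon,u)$-normal for \emph{some} $u$ with $|u|=k$ is at most $b^k\cdot b^{\ell(1-\delta)}$, which is again $\le b^{\ell(1-\delta/2)}$ for all sufficiently large $\ell$. Choosing the final $\delta$ to absorb the constant factor $b^k$ completes the argument.

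The main obstacle is the bookkeeping converting between the overlapping occurrence count used in the definition of $(\epsilon,u)$-normality and the independent disjoint-block count where the Chernoff bound applies cleanly. One has to verify that a genuine deviation in $\nu_u(s)/(\ell-k+1)$ from $p$ really does force a deviation in at least one offset's disjoint-block frequency, with an explicit but harmless loss in the size of $\epsilon$ (here $\epsilon'\asymp\epsilon/k$ or similar), and that the per-offset ``boundary'' digits and the normalization difference between $\ell-k+1$ and $r=\lfloor \ell/k\rfloor$ are all negligible at the level of the exponent. Since the result is quoted from Pollack--Vandehey (and ultimately Copeland--Erd\H{o}s), the cleanest route is simply to invoke their estimate; but if a self-contained proof is wanted, the large-deviation step is the heart, and tracking the dependence of $\delta$ on $\epsilon$, $k$, and $b$ through the Chernoff exponent is the only delicate point.
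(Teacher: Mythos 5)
The paper offers no proof of this lemma at all: it is quoted as stated from Pollack--Vandehey (their Proposition 2), ultimately going back to Copeland--Erd\H{o}s, so there is no in-paper argument to compare against. Your self-contained sketch is a correct reconstruction of the classical counting proof, and every step you flag as delicate does in fact go through. One point is actually cleaner than you suggest: the conversion between the overlapping count and the disjoint-block counts is \emph{exact}, with no loss in $\epsilon$. Every occurrence of $u$ starting at position $i\in\{1,\dots,\ell-k+1\}$ lies in exactly one offset class $(i-1)\bmod k$, so $\nu_u(s)=\sum_{o=0}^{k-1}X_o$ where $X_o$ is the number of blocks equal to $u$ in the offset-$o$ partition; moreover the numbers $r_o$ of complete blocks satisfy $\sum_{o}r_o=\ell-k+1$ exactly. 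Hence if $|X_o-pr_o|\le\epsilon r_o$ for every offset $o$, summing gives $|\nu_u(s)-p(\ell-k+1)|\le\epsilon(\ell-k+1)$, and the contrapositive says a failure of $(\epsilon,u)$-normality forces a deviation of at least $\epsilon r_o$ at some offset --- so you may take $\epsilon'=\epsilon$ rather than $\epsilon/k$. The rest of your accounting is right: for a fixed offset, Hoeffding/Chernoff bounds the number of bad block-sequences by $2\,(b^k)^{r_o(1-\delta_0)}$ with $\delta_0=\delta_0(\epsilon,k,b)>0$, the fewer than $2k$ leftover digits contribute a factor $b^{2k}$, and since $kr_o\ge\ell-2k$ the resulting count is $b^{\ell(1-\delta_0)+O_k(1)}$; the union over the $k$ offsets and the $b^k$ words of length $k$ multiplies by a constant independent of $\ell$, which is absorbed by replacing $\delta_0$ with $\delta=\delta_0/2$ once $\ell$ is large. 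So your proof is complete modulo the standard binomial large-deviation estimate, which is precisely the engine of the original Copeland--Erd\H{o}s argument.
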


We need the following variant of the above result.

\begin{lem}\label{lem:Copeland-Erdos var}
Fix a base $b$. Let $\epsilon>0$ and $k\in \mathbb{N}$ be fixed. Then the number of base-$b$ strings of length $\ell$ that are not $(\epsilon,k,m)$-normal (or $(\epsilon,k,m)^*$-normal) to base-$b$ is at most
\[
O(b^{\ell-\delta m}),
\]
where $\delta=\delta(\epsilon,k,b)$ is as in Lemma \ref{lem:Copeland-Erdos}, provided $m$ is large enough. The implicit constant in the big-O may be different depending on whether we consider $(\epsilon,k,m)$-normality or $(\epsilon,k,m)^*$-normality, but is otherwise independent of other variables.
\end{lem}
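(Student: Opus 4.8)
The plan is to reduce everything to the count already established in Lemma~\ref{lem:Copeland-Erdos} by means of a union bound over the relevant prefix lengths. First I would unwind the definition: a base-$b$ string $s$ of length $\ell$ is $(\epsilon,k,m)$-normal precisely when, for every $j$ with $1\le j\le \lfloor \ell/m\rfloor$, the prefix $s^{(jm)}$ of length $jm$ is $(\epsilon,k)$-normal. Hence $s$ fails to be $(\epsilon,k,m)$-normal if and only if there is some such $j$ for which $s^{(jm)}$ is not $(\epsilon,k)$-normal, and I would bound the number of bad strings by summing, over $j$, the number of strings whose length-$jm$ prefix is bad.

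For a fixed $j$, a bad string is obtained by choosing a non-$(\epsilon,k)$-normal prefix of length $jm$ and then completing it arbitrarily in the remaining $\ell-jm$ positions. Taking $m$ large enough that Lemma~\ref{lem:Copeland-Erdos} applies at every length $jm\ge m$, the number of bad prefixes is at most $b^{jm(1-\delta)}$, so the number of length-$\ell$ strings with a bad prefix at position $jm$ is at most $b^{jm(1-\delta)}\cdot b^{\ell-jm}=b^{\ell}\,b^{-jm\delta}$. Summing the resulting geometric series gives
\[
\sum_{j=1}^{\lfloor \ell/m\rfloor} b^{\ell}\,b^{-jm\delta}\le b^{\ell}\,\frac{b^{-m\delta}}{1-b^{-m\delta}},
\]
and once $m$ is large enough that $b^{-m\delta}\le \tfrac12$ the factor $(1-b^{-m\delta})^{-1}$ is at most $2$, yielding the claimed bound $O(b^{\ell-\delta m})$. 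The point worth noting is that the sum is dominated by its first term: the shortest prefix (length $m$) already forces the strongest savings relative to the number of completions, and longer prefixes contribute only a geometrically smaller amount.

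For the $(\epsilon,k,m)^*$ version I would exploit that the reversal map $s\mapsto s^*$ is a bijection on the set of length-$\ell$ base-$b$ strings. A string fails to be $(\epsilon,k,m)^*$-normal precisely when $s$ is not $(\epsilon,k,m)$-normal or $s^*$ is not $(\epsilon,k,m)$-normal; by the bijection each of these two events is counted by the bound just proved, and a final union bound doubles the implicit constant but leaves the estimate $O(b^{\ell-\delta m})$ intact.

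I expect no serious obstacle here: the argument is a routine union bound, and the only points to verify are that $m$ is taken large enough for Lemma~\ref{lem:Copeland-Erdos} to be valid at the smallest prefix length $m$ (so that it automatically holds at all $jm$), and large enough for the geometric series to converge with a bounded constant. The one mild subtlety is organizational---either to count by the position of the \emph{first} violating prefix or, more crudely, to accept the union bound over all violating positions---which is harmless precisely because the geometric series is controlled by its leading term.
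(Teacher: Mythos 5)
Your proposal is correct and follows essentially the same argument as the paper: a union bound over the prefix lengths $jm$, the bound $b^{jm(1-\delta)}\cdot b^{\ell-jm}$ from Lemma \ref{lem:Copeland-Erdos} for each term, summation of the resulting geometric series with $m$ large enough that $1-b^{-\delta m}\ge 1/2$, and the reversal-bijection argument to handle the $(\epsilon,k,m)^*$ case at the cost of a factor of $2$. Your added remark that $m$ must be large enough for Lemma \ref{lem:Copeland-Erdos} to apply at the smallest prefix length is a point the paper leaves implicit, but it is the same proof.
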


\begin{proof}The number of strings of length $\ell$ whose first $im$ digits ($1\le im \le \ell$) do not form a $(\epsilon,k)$-normal number is at most $b^{im(1-\delta)+\ell-im}$ by Lemma \ref{lem:Copeland-Erdos}. So the number of strings of length $\ell$ counted by the lemma is at most
\[
\sum_{1\le im\le \ell} b^{im(1-\delta)+\ell-im}=b^\ell \sum_{1\le im \le \ell} b^{-\delta im}  \le b^{\ell} \frac{b^{-\delta m }}{1-b^{-\delta m}}.\]
By choosing $m$ large enough so that $1-b^{-\delta m}\ge 1/2$, we get the desired result for non-$(\epsilon,k,m)$-normal strings. 

Finally, we see that the number of non-$(\epsilon,k,m)^*$-normal strings is at most twice the number of non-$(\epsilon,k,m)$-normal strings, since it is at most the number of strings $s$ that are non-$(\epsilon,k,m)$-normal plus the number of strings $s$ such that $s^*$ is non-$(\epsilon,k,m)$-normal, and the operation $s\mapsto s^*$ is a bijection on the strings of a fixed length.
\end{proof}

We will need to apply the previous result in the context of the following result. Fundamentally what the next lemma says is that most numbers that are really close to a $b$-adic rational with $b$ odd will tend to have really well-behaved binary expansions.

\begin{lem}\label{lem:Copeland-Erdos fraction variant}
Let $b\ge 3$ be an odd prime  integer base. Let $j\ge 1$ be an integer. Let $M=b^{\ell j}$ for some positive integer $\ell$.

Let $I\subset [0,1)$ be an interval and $j_0=-\lceil \log_{2}\lambda(I) \rceil $, and define $J=\lceil j \log_2 b  \rceil- j_0$. We will assume that $j$ is large enough that $J\ge \lceil j \log_2 b \rceil/2$. Let $\epsilon>0$ and $k$, $m$ be integers.

For each block $B$ such that $r_B\in I$ and $q(B)=b^j$, let $s=S_2(C_{BM})$; and decompose $s$ as the concatenation of $c_0c_1\dots c_{t}$ such that $|c_0|=j_0$,  $|c_1|=|c_2|=\dots  = |c_{t-1}|=J$, and $|c_t|\le J$. Then there are at most \[O\left(\ell \lambda(I) b^{j}2^{-\delta m}\right)\] different blocks $B$ where at least one of the strings $c_1, \dots, c_{t-1}$ is not $(\epsilon,k,m)^*$-normal in base 2, provided $m$ is sufficiently large (dependent on $\epsilon,k$), with $\delta=\delta(\epsilon,k,2)$ as in Lemma \ref{lem:Copeland-Erdos var}.
\end{lem}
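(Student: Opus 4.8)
\textbf{The plan} is to count the ``bad'' blocks $B$ by first bounding the number of bad base-$2$ strings that can appear as one of the segments $c_1,\dots,c_{t-1}$, and then bounding the number of blocks $B$ that can give rise to a given bad segment. The point is that the interval $C_{BM}$ is very small---of Lebesgue measure roughly $q(BM)^{-2}$, where $q(BM)=b^j M = b^{j(\ell+1)}$---so that $L_2(C_{BM})$ is large and the string $s=S_2(C_{BM})$ has many base-$2$ digits predetermined. The segments $c_1,\dots,c_{t-1}$ are consecutive length-$J$ windows into this predetermined part of $s$, and the hypothesis $J\ge\lceil j\log_2 b\rceil/2$ together with $M=b^{\ell j}$ ensures that each window has length on the order of a constant times $j$, which will be large once $m$ is large.

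First I would count, for a fixed window position, how many length-$J$ base-$2$ strings fail to be $(\epsilon,k,m)^*$-normal. By Lemma \ref{lem:Copeland-Erdos var}, this count is $O(2^{J-\delta m})$ with $\delta=\delta(\epsilon,k,2)$. Next I would argue that a given length-$J$ base-$2$ string, sitting in a specified window position within $s$, can arise from only a controlled number of blocks $B$ with $r_B\in I$ and $q(B)=b^j$. The key observation is that specifying the window $c_i$ together with all the earlier digits $c_0c_1\dots c_{i-1}$ pins down $s$ on a prefix of length $j_0+iJ$, which confines the corresponding values of $r_B$ to a dyadic interval of length $2^{-(j_0+iJ)}$; and since all the $r_B$ in question are distinct fractions with denominator exactly $b^j$ lying in $I$, the number of such $r_B$ inside any interval of length $\rho$ is $O(\rho\, b^j+1)$. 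Because $I$ itself has length $\lambda(I)\asymp 2^{-j_0}$, summing the count $O(2^{-(j_0+iJ)}b^j)$ over all fractions with the bad window is what produces the total.

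The cleanest way to assemble this is to sum over window positions $i\in\{1,\dots,t-1\}$ and, for each $i$, over the $O(2^{J-\delta m})$ bad strings that could occupy $c_i$, the number of admissible blocks $B$ consistent with that bad string. For each fixed $i$ and each fixed choice of $c_1,\dots,c_i$ (of which the relevant ones number at most $2^{(i-1)J}\cdot O(2^{J-\delta m})=O(2^{iJ-\delta m})$, since $c_1,\dots,c_{i-1}$ are free and $c_i$ ranges over bad strings), the blocks $B$ consistent with this prefix of $s$ lie in a dyadic interval of length $2^{-(j_0+iJ)}$ inside $I$, hence number $O(2^{-(j_0+iJ)}b^j)$ of them (the additive $+1$ is absorbed because $\lambda(I)b^j\gg 1$ for large $j$). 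Multiplying, the contribution of window $i$ is $O\!\left(2^{iJ-\delta m}\cdot 2^{-(j_0+iJ)}b^j\right)=O\!\left(2^{-j_0}b^j 2^{-\delta m}\right)=O\!\left(\lambda(I)b^j 2^{-\delta m}\right)$, where I used $\lambda(I)\asymp 2^{-j_0}$. Since $t=O(\ell)$ (the total number of windows is $L_2(C_{BM})/J$, and $L_2(C_{BM})\asymp j(\ell+1)\log_2 b$ while $J\asymp j$), summing over the $O(\ell)$ window positions yields the claimed bound $O\!\left(\ell\,\lambda(I)\,b^j\,2^{-\delta m}\right)$.

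\textbf{The main obstacle} I anticipate is the bookkeeping that justifies the counting step: namely, that distinct blocks $B$ with $q(B)=b^j$ and $r_B\in I$ really do correspond to distinct fractions $r_B=a/b^j$ (up to the two-to-one ambiguity of CF expansions, which contributes only a harmless constant factor), and that fixing the base-$2$ prefix $c_0c_1\dots c_i$ of $s=S_2(C_{BM})$ genuinely localizes $r_B$ to a dyadic interval of the stated length. This requires care because $s=S_2(C_{BM})$ is the base-$2$ digit string of the \emph{shifted} cylinder $C_{BM}$ rather than of $r_B$ directly; I would handle this by invoking Lemma \ref{lem:base-b expansion of I} and Remark \ref{rem:base-b digit variants} to see that the first $j_0=L_2(I)$ digits of every point of $I$---in particular of $r_B$ and of $C_{BM}$---agree up to an additive $\pm1$ in $S_2$, and that appending the large digit $M$ only refines the interval without disturbing this leading agreement, so that knowledge of the window $c_i$ translates into a genuine constraint on the location of $r_B$. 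The hypothesis $J\ge\lceil j\log_2 b\rceil/2$ is exactly what guarantees each window has length comparable to $j$, so that ``$m$ sufficiently large'' can be taken uniformly and the geometric-series savings $2^{-\delta m}$ survives intact after summing over the $O(\ell)$ windows.
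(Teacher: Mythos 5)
Your overall frame agrees with the paper's: count the non-$(\epsilon,k,m)^*$-normal length-$J$ strings via Lemma \ref{lem:Copeland-Erdos var} (giving $O(2^{J-\delta m})$ of them), bound how many admissible blocks $B$ can produce a given bad string, and sum over the $t-1=O(\ell)$ windows. Your treatment of the first window $i=1$ is correct and is essentially the paper's (fixing $c_0c_1$ localizes $r_B$ to an interval of length about $2^{-(j_0+J)}\le b^{-j}$, so $O(1)$ fractions per string). The gap is in every window $i\ge 2$, and it is fatal. Your counting unit there is the \emph{full} prefix $c_0c_1\cdots c_i$, with $c_1,\dots,c_{i-1}$ left free; there are $O(2^{(i-1)J}\cdot 2^{J-\delta m})=O(2^{iJ-\delta m})$ such prefixes, and the number of fractions $a/b^j$ in a dyadic interval of length $2^{-(j_0+iJ)}$ is $O(2^{-(j_0+iJ)}b^j+1)$. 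But since $2^{j_0+J}=2^{\lceil j\log_2 b\rceil}\ge b^j$, for $i\ge 2$ one has $2^{-(j_0+iJ)}b^j\le 2^{-(i-1)J}<1$, so the ``$+1$'' \emph{dominates} the per-prefix count and cannot be dropped: the true value of your product is $O(2^{iJ-\delta m})$, which already for $i=2$ is of order $\left(\lambda(I)b^j\right)^2 2^{-\delta m}$ --- larger than the total number of blocks once $J>\delta m$ (which holds as soon as $j$ is large, since $m$ is fixed), hence vacuous. Your stated justification, that the $+1$ is absorbed because $\lambda(I)b^j\gg 1$, addresses the wrong quantity: what must exceed $1$ is $2^{-(j_0+iJ)}b^j$, not $2^{-j_0}b^j$, and it does not for any $i\ge 2$.

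A decisive sanity check that no argument of your type can close this gap: nowhere do you use that $b$ is odd, yet the lemma is false for $b=2$. Indeed, for $b=2$ every $r_B=a/2^j$ is a dyadic rational, and $r_{BM}=a/2^j+O(2^{-j(\ell+2)})$, so for \emph{every} block $B$ the windows $c_2,c_3,\dots$ consist of (essentially) all $0$'s or all $1$'s and are never $(\epsilon,k,m)^*$-normal; the count of bad blocks is then $\asymp\lambda(I)2^j$, not $O(\ell\lambda(I)2^j2^{-\delta m})$. Coprimality of $b$ with $2$ must therefore enter precisely where your argument breaks, and this is exactly what the paper does: instead of fixing the whole prefix, it fixes only the single window $c_i$ and proves that each string value of $c_i$ can arise from at most $O(1)$ values of $a$, \emph{uniformly over whatever the intermediate windows are}. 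This is done by approximating $r_{B(a)M}=a/b^j-1/(Mb^{2j})+O(1/(M^2b^{2j}))$, extracting from $c_i(a)=c_i(a')$ a relation $2^{j_0+(i-1)J}(a-a')=b^jn+O(1)$, and then reducing modulo $2^{(i-1)J}$: since $b^j$ is invertible modulo powers of $2$ and $n$ ranges over a bounded number of complete residue systems, the congruence $b^jn+O(1)\equiv 0\pmod{2^{(i-1)J}}$ has $O(1)$ solutions. That multiplicity bound, combined with Lemma \ref{lem:Copeland-Erdos var}, gives the per-window count $O(2^{J-\delta m})\asymp\lambda(I)b^j2^{-\delta m}$, which your prefix-based localization cannot recover.
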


The idea of breaking $S_2(C_{BM})$ into several pieces and analyzing the normality properties of them separately comes from the main result of \cite{Besicovitch}, see also \cite{PV}.

\begin{proof}
Let us fix a block $B$. Then the cylinder set $C_{BM}$ is the interval between the points $r_{BM}=\frac{p(BM)}{q(BM)}$ and $r_{BM1}=\frac{p(BM)+p(B)}{q(BM)+q(B)}$. Let $a$ be the integer relatively prime to $b$ such that $r_B=a/b^j$. Then 
\[
r_{BM} = \frac{a}{b^j} + \frac{(-1)^{|B|+1}}{b^j q(BM)} \qquad r_{BM1} = \frac{a}{b^j}+ \frac{(-1)^{|B|+1}}{b^j (q(BM)+b^j)} .
\]
Note moreover that $q(BM)$ is relatively prime to $b$. Using Proposition \ref{prop:product of denominators}, we see that 
\[
    L_2(C_{BM}) = 2\log_2 q(BM) +O(1)= 2\log_2 (q(B) \cdot M)+ O(1)=2(\ell+1)j\log_2 b+O(1).
\]
Since $j_0+(t-1)J\le L_2(C_{BM})$, this implies that $t\ll \ell j \log_2(b)/J$. By our assumption on the size of $J$, this in turn gives $t \ll \ell$.

The point $r_{BM}$ is to the left of $r_{BM1}$ if and only if  $|B|$ is odd. Since every fraction of the form $a/b^j$ has two corresponding ways of writing its continued fraction expansion one whose length is even and one whose length is odd, we will for the moment assume that $|B|$ is always even and instead of counting blocks, we will instead count $a$'s. For any given $a$ we will let $B(a)$ denote the corresponding block of even length.

First, consider how many possible $a$'s can have $c_1$ be non-$(\epsilon,k,m)^*$-normal. The only way for two values $a,a'$ to give rise to the same $c_0c_1$ is if 
\begin{equation}
\frac{1}{2^{|c_0c_1|}} \ge \left| r_{B(a)M}-r_{B(a')M}\right| \ge \frac{|a-a'|}{b^j} - \left| \frac{1}{b^j q(B(a)M)}- \frac{1}{b^jq(B(a')M)}\right|,\label{eq:copelanderdos frac var 1}
\end{equation}
where in the last inequality, we applied our explicit formulas for $r_{BM}$ above. By construction of $J$, we see the the left-hand side is at most $b^{-j}$. Moreover, the absolute value on the far right-hand side is smaller than $b^{-\ell j}$ and so it is clear that at most 2 different values of $a$ can give rise to the same $c_0c_1$. However, as noted in Remark \ref{rem:base-b digit variants}, there could be as many as $3$ separate values for $c_0$ for different fractions in $I$. Therefore, any given $c_1$ can appear for no more than $6$ different values of $a$. Since, by Lemma \ref{lem:Copeland-Erdos var}, we have that at most 
$ O(2^{J-\delta m})$ of the $c_1$'s are not $(\epsilon,k,m)^*$-normal, we get that there are likewise   at most $ O(2^{J-\delta m})$ values of $a$ which make $c_1$ not $(\epsilon,k,m)^*$-normal.

Now we consider for how many $a$'s give rise to $c_i$, for a fixed value of $i\in [2,t-1]$, that are not $(\epsilon,k,m)^*$-normal. Suppose $a,a'$ give rise to the same $c_i$, and without generality assume that $a>a'$. Then the definition of $c_i$ implies that
\[
\lfloor 2^{j_0+iJ} r_{B(a)M}\rfloor \equiv \lfloor 2^{j_0+iJ} r_{B(a')M}\rfloor \pmod{2^J} .
\]
We can approximate $r_{BM}$ very well. Namely,
\begin{align*}
    r_{BM} &= \frac{a}{b^j} -\frac{1}{b^j q(BM)}\\
     &= \frac{a}{b^j} -\frac{1}{b^j (Mb^j + O(b^j))}\\
      &= \frac{a}{b^j} -\frac{1}{Mb^{2j}}+O\left(\frac{1}{M^2b^{2j}} \right).
\end{align*}
With this approximation, the above modular equivalence implies that there is some integer $n$ such that
\begin{equation}
\frac{2^{j_0+iJ}}{b^j} (a-a') = n 2^J +O(1)+ O\left( \frac{2^{j_0+iJ}}{M^2b^{2j}}   \right). \label{eq:copelanderdos frac var 2}
\end{equation}
The $O(1)$ comes from replacing $\lfloor z \rfloor$ with $z$. Since $j_0+iJ\le |c_0c_1\dots c_t|= L_2(C_{BM})$, which we showed earlier was at most $2(\ell+1)j\log_2 b+O(1)$, and since $M=b^{\ell j}$, the final big-O term in the previous displayed equation is $O(1)$ as well. Moreover since $(a-a')/b^j\in [0,\lambda(I)]$, we have that $n$ is within $O(1)$ of $[0, 2^{j_0+(i-1)J}\lambda(I)]$.

We now rearrange the above equation to get
\[
2^{j_0+(i-1)J}(a-a')=b^j n + O\left( \frac{b^j}{2^{J}}\right).
\]
The big-O term on the right-hand side is at most $O(1)$. So if we take this equation modulo $2^{(i-1)J}$, we get
\[
b^j n +O(1) \equiv 0 \pmod{2^{(i-1)J}}.
\]
First, note that $2^{j_0+(i-1)J} \lambda(I) \ll 2^{(i-1)J}$, so if we allow $n$ to vary over its range, then $n$ runs through at most a uniformly bounded number of complete residue sets. Since $b$ and $2$ are assumed to be relatively prime, this means that $b^j n$ runs through at most a uniformly bounded number of complete residue sets. And therefore, there are at most $O(1)$ solutions to the above equivalence. And this in turn implies that at most $O(1)$ values of $a$ can give rise to the same $c_i$. Since, by Lemma \ref{lem:Copeland-Erdos var}, we have that at most $ O(2^{J-\delta m})$ of the $c_i$'s are not $(\epsilon,k,m)^*$-normal, and hence at most $O(2^{J-\delta m})$ of the $a$'s give rise to $c_i$'s that are not $(\epsilon,k,m)^*$-normal. 

Summing up over all $i$'s from $1$ to $t-1$, we see that there are at most $O(t 2^{J-\delta m})$ values of $a$ which give rise to at least one of the strings $c_1, \dots, c_{t-1}$ being not $(\epsilon,k,m)^*$-normal. 

The proof when we assume $|B|$ is odd gives the same bound. Recalling our bound on $t$ from above and the definition of $J$, gives the desired result.
\end{proof}

The following variant of the above lemma will also be helpful: 
\begin{lem}\label{lem:Copeland-Erdos fraction variant 2}
Let $b,b'\ge 2$ be co-prime  integer bases. Let $j\ge 1$ be an integer.

Let $I\subset [0,1)$ be an interval and $j_0=-\lceil \log_{b'}\lambda(I) \rceil $, and define $J=\lceil j \log_{b'} b  \rceil- j_0$. We will assume that $j$ is large enough that $J\ge \lceil j \log_{b'} b \rceil/2$. Let $\epsilon>0$ and let $k,m$ be integers.

For each block $B$ such that $r_B\in I$ and $q(B)=b^j$, let $s=S_{b'}(C_{B})$; and decompose $s$ as the concatenation of $c_0c_1c_2c_3$ such that $|c_0|=j_0$,  $|c_1|=|c_2|=J$, and $|c_3|= j_0+O(1)$. Then there are at most \[O\left( j \lambda(I) b^{j}{b'}^{-\delta m}\right)\] different blocks $B$ where at least one of the strings $c_1, c_2$ is not $(\epsilon,k,m)^*$-normal in base $b'$, provided $m$ is sufficiently large in terms of $\epsilon$ and $k$, with $\delta=\delta(\epsilon,k,b')$ as in Lemma \ref{lem:Copeland-Erdos var}.
\end{lem}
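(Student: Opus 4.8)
The plan is to run the proof of Lemma \ref{lem:Copeland-Erdos fraction variant} again, now with three structural changes dictated by the hypotheses. First, we analyze the cylinder $C_B$ directly instead of $C_{BM}$, so that the endpoint $r_B=a/b^j$ is \emph{exact} and there is no error coming from an appended large digit. Second, there are now only two interior blocks $c_1,c_2$ to control, rather than the $\asymp\ell$ blocks of the earlier lemma. Third, the base $b'$ is an arbitrary base coprime to $b$ rather than $2$; this enters the arithmetic only through the invertibility of powers of $b'$ modulo $b^j$, which is exactly what $\gcd(b,b')=1$ provides.

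First I would set up the reduction to counting numerators. Fix a block $B$ with $q(B)=b^j$ and write $r_B=a/b^j$ with $\gcd(a,b)=1$; as before I would use the (essentially) two-to-one correspondence between blocks and fractions to fix the parity of $|B|$ and count numerators $a$ with $a/b^j\in I$, at the cost of a harmless factor of $2$, and I would absorb the at most three possible leading strings $c_0$ using Remark \ref{rem:base-b digit variants}. By Proposition \ref{prop:Lebesgue to denominator comparison} one has $\lambda(C_B)\asymp q(B)^{-2}=b^{-2j}$, so $L_{b'}(C_B)=2j\log_{b'}b+O(1)$ \emph{uniformly} in $B$; since $r_B=a/b^j$ is an endpoint of $C_B$, it follows that $s=S_{b'}(C_B)$ agrees with the first $L_{b'}(C_B)$ base-$b'$ digits of $a/b^j$ up to $O(1)$ positions, and the decomposition $c_0c_1c_2c_3$ with $|c_1|=|c_2|=J$ and $|c_3|=j_0+O(1)$ is consistent with $L_{b'}(C_B)=2(j_0+J)+O(1)=2\lceil j\log_{b'}b\rceil+O(1)$.

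Next I would bound the bad numerators block by block. For $c_1$ the argument is clean: two numerators sharing the same $c_0c_1$ must have $a/b^j$ and $a'/b^j$ agree to $j_0+J=\lceil j\log_{b'}b\rceil$ base-$b'$ digits, hence lie within ${b'}^{-(j_0+J)}\le b^{-j}$ of one another; since distinct numerators are $b^{-j}$-separated, each admissible $c_0c_1$ comes from $O(1)$ numerators, and Lemma \ref{lem:Copeland-Erdos var} bounds the number of non-$(\epsilon,k,m)^*$-normal strings of length $J$ by $O({b'}^{J-\delta m})$, giving $O({b'}^{J-\delta m})$ bad numerators for $c_1$. For the deeper block $c_2$ I would translate $c_2(a)=\sigma$ into the congruence $\lfloor {b'}^{j_0+2J}a/b^j\rfloor\equiv\sigma\pmod{{b'}^J}$ and deduce, exactly as in Lemma \ref{lem:Copeland-Erdos fraction variant}, that two numerators with the same $c_2$ satisfy ${b'}^{j_0+J}(a-a')\equiv r\pmod{b^j}$ with $|r|\ll b^j{b'}^{-J}$. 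Here $\gcd(b,b')=1$ makes $P:={b'}^{j_0+J}$ invertible modulo $Q:=b^j$, so each admissible residue $r$ pins $a-a'$ to a single residue class modulo $b^j$.

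The main obstacle is the resulting collision count for $c_2$. Because we work with $C_B$ itself rather than the strongly regularized $C_{BM}$, the classes $P^{-1}r\pmod{b^j}$ for $|r|\ll b^j{b'}^{-J}$ are scattered, and I must bound how many of their representatives land in the short window $|a-a'|\le b^j\lambda(I)$ cut out by $I$; equivalently, how many $a$ in this window realize a fixed value $c_2(a)=\sigma$. This is a one-dimensional equidistribution question for the arithmetic progression with common difference $P^{-1}$ modulo $b^j=q(B)$, and its discrepancy, of size $O(\log q(B))=O(j)$, dominates the main term (which is $\asymp b^j\lambda(I)\,{b'}^{-J}\asymp 1$). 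Thus I expect each value of $c_2$ to be attained by at most $O(j)$ numerators, and combining this with the $O({b'}^{J-\delta m})$ bound on bad length-$J$ strings from Lemma \ref{lem:Copeland-Erdos var} gives $O(j\,{b'}^{J-\delta m})$ bad numerators for $c_2$. Finally I would add the $c_1$ and $c_2$ contributions, use ${b'}^{J}\asymp\lambda(I)b^j$, and restore the two parities to reach the claimed bound $O\!\left(j\lambda(I)b^j{b'}^{-\delta m}\right)$; the extra factor of $j$, absent from the naive count $O(\lambda(I)b^j{b'}^{-\delta m})$, is precisely this discrepancy term.
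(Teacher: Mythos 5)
Your structural reduction is exactly the paper's: the paper proves this lemma by rerunning the proof of Lemma \ref{lem:Copeland-Erdos fraction variant} with $2$ replaced by $b'$ and with the endpoints $r_B=a/b^j$ and $r_{B1}=a/b^j+(-1)^{|B|+1}/(b^jq(B1))$ in place of $r_{BM},r_{BM1}$, so that the last term of \eqref{eq:copelanderdos frac var 1} becomes $O(1/b^{2j})$, the last term of \eqref{eq:copelanderdos frac var 2} becomes $O({b'}^{j_0+2J}/b^{2j})=O(\lambda(I))=O(1)$, and the only new work is the refined bound $|c_3|=j_0+O(1)$. Your treatment of the parity/endpoint issues and of $c_1$ agrees with this. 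Where you diverge is the collision count for $c_2$, and that is where your proposal has a genuine gap. The paper uses no equidistribution input there: it keeps the congruence argument of Lemma \ref{lem:Copeland-Erdos fraction variant}, namely $b^jn+O(1)\equiv 0\pmod{{b'}^{J}}$ with $n$ ranging over $O(1)$ complete residue systems and $b^j$ invertible modulo ${b'}^{J}$, concluding that each value of $c_2$ arises from $O(1)$ numerators. In particular the factor $j$ in the statement is \emph{not} a discrepancy term: in Lemma \ref{lem:Copeland-Erdos fraction variant} the corresponding factor $\ell$ counts the number of strings $c_1,\dots,c_{t-1}$, and here, with only two strings, it is simply slack in the bound.

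Your substitute step --- that each value of $c_2$ is attained by at most $O(j)$ numerators because the relevant arithmetic progression modulo $b^j$ has discrepancy $O(\log q(B))$ --- is not a theorem. What you need to bound is
\[
\#\left\{\,k : |k|\le H,\ \left\|k\,{b'}^{j_0+J}/b^j\right\|\le \delta\,\right\},\qquad H\asymp \lambda(I)b^j,\ \ \delta\asymp {b'}^{-J},
\]
and this count is governed by the continued fraction expansion of $\alpha={b'}^{j_0+J}/b^j$ at scales up to $H$: if the convergent denominators of $\alpha$ jump over $H$ (one large partial quotient), the count is of order $\min(H/q_r,\delta q_{r+1})+1$, which can be far larger than $j$ even though the ``main term'' $H\delta$ is $O(1)$. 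Worst-case discrepancy of a truncated progression $\{k\alpha\}_{k\le H}$ is emphatically not $O(\log b^j)$; that is a typical-case heuristic. Moreover, nothing elementary is known about the partial quotients of ${b'}^{j_0+J}/b^j$ --- ruling out that ${b'}^{j_0+J}k\bmod b^j$ is abnormally small for some small $k$ is a linear-forms-in-logarithms type problem --- so this step cannot be repaired by citing a standard equidistribution fact. To your credit, you have put your finger on a real subtlety: the paper's own assertion that the rearranged error $O(b^j/{b'}^{J})$ is $O(1)$ is transparent only when $\lambda(I)\gg 1$, since $b^j/{b'}^{J}\asymp{b'}^{j_0}\asymp 1/\lambda(I)$. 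But noticing the subtlety does not validate your fix; as written, your argument does not establish the stated bound, whereas the paper's (terse) proof invokes only the congruence/invertibility mechanism of Lemma \ref{lem:Copeland-Erdos fraction variant}.
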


\begin{proof}
Most of the details follow in a similar manner to the previous lemma with the role of $2$ replaced with $b'$. One notable exception is that instead of considering $r_{BM}$ and $r_{BM1}$, we consider
\[
r_{B} = \frac{a}{b^j} \qquad \text{and} \qquad r_{B1} = \frac{a}{b^j} + \frac{(-1)^{|B|+1}}{b^j q(B1)}.
\]
Both of these points can be represented as $a/b^j+O(1/b^{2j})$. In the previous proof, this means the last term of \eqref{eq:copelanderdos frac var 1} would be replaced with a term of size $O(1/b^{2j})$ and the last term of \eqref{eq:copelanderdos frac var 2} would be of size $O({b'}^{j_0+2J}/b^{2j})$. But this results in no substantial change to the remainder of the proof. 

We also need to prove the refined bound on the size of $|c_3|$. For this, note that $L_2(C_B) = 2\log_{b'} b^j+O(1)$ and that $J=\lceil j \log_{b'} b\rceil-j_0$. So \[ |c_3| = L_2(C_B)-|c_0c_1c_2| = j_0+O(1),\] 
as desired.
\end{proof}

Our reason for studying $(\epsilon,k,m)^*$-normality rather than $(\epsilon,k,m)$-normality is given in the following result.

\begin{lem}\label{lem: adding does not alter normality}
Suppose $s$ is an $(\epsilon,k,m)^*$-normal binary string of length with $m-k\ge 2$ and $\epsilon<1/6$. Then $s+1$ and $s+2$, interpretted as binary strings of the same length as $s$, can be decomposed as an $(\epsilon,k,m)$-normal string and a string of length at most $m+2$.
\end{lem}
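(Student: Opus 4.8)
The plan is to understand what happens to the binary expansion of a number when we add $1$ or $2$ to the integer it represents. The key observation is that adding a small number to a binary string only changes digits via carrying, and carrying propagates from the low-order end until it hits the first $0$ (for $+1$) or resolves within the last two positions plus carries (for $+2$). First I would write $s=s' s''$ where $s''$ is the maximal suffix of $s$ on which the addition causes changes — that is, the carry propagates through a run of trailing $1$'s. If $s+1$ or $s+2$ has the form where carrying only affects the last few digits, then $s''$ is short and $s'$ is unchanged, so we could take the $(\epsilon,k,m)$-normal string to be $s'$ (suitably truncated to a multiple of $m$) and absorb $s''$ into the leftover block of length at most $m+2$. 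The subtlety is that carrying can in principle propagate a long way if $s$ ends in many $1$'s, so I must rule that out using $(\epsilon,k,m)^*$-normality.

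The main point is this: because $s$ is $(\epsilon,k,m)^*$-normal, in particular $s^*$ (the reversal) is $(\epsilon,k)$-normal, which controls the frequency of the all-ones word $(1)^k$ appearing near the \emph{end} of $s$. Since $\epsilon<1/6$ and $m(1\cdots1)=2^{-k}$, a string of $k$ consecutive $1$'s at the very end of $s$ would, if extended, force the frequency of $(1)^k$ in a short prefix of $s^*$ to be far from $2^{-k}$; more directly, $(\epsilon,k,m)^*$-normality with a prefix-of-$s^*$ of length $m$ guarantees that the first $m$ digits of $s^*$ — i.e., the last $m$ digits of $s$ — cannot be entirely $1$'s, and in fact cannot contain a trailing run of $1$'s longer than roughly $m$. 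So I would argue that the trailing run of $1$'s in $s$ has length at most $m$ (with the constant sharpened using $m-k\ge 2$ and $\epsilon<1/6$), which bounds how far the carry from $+1$ or $+2$ can propagate. This confines all altered digits to a suffix of length at most $m+2$.

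Concretely, the steps I would carry out are: (i) bound the length $r$ of the maximal trailing run of $1$'s in $s$ by showing $(\epsilon,k,m)^*$-normality forces $r\le m$ — this is where the reversal condition and the hypotheses $\epsilon<1/6$, $m-k\ge 2$ are consumed; (ii) observe that adding $1$ changes exactly the last $r+1$ digits (turning the run of $1$'s to $0$'s and flipping the next $0$ to a $1$), and adding $2$ changes at most the last $r+2$ digits by a similar carry analysis; (iii) split $s+1$ (or $s+2$) as $\hat s\, \hat t$ where $\hat t$ consists of the altered suffix padded out to the next multiple of $m$, so $|\hat t|\le m+2$, and $\hat s$ is a prefix that is identical to the corresponding prefix of $s$ and has length a multiple of $m$; (iv) conclude $\hat s$ is $(\epsilon,k,m)$-normal because it is an unchanged prefix of the $(\epsilon,k,m)$-normal string $s$, and every length-multiple-of-$m$ prefix of $\hat s$ is also such a prefix of $s$.

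The hard part will be step (i): making precise exactly how $(\epsilon,k,m)^*$-normality limits the trailing run of $1$'s, and checking that the numerical hypotheses $\epsilon<1/6$ and $m-k\ge 2$ are exactly what is needed to push the bound through. One must be careful that the relevant normality condition is stated for prefixes of length a \emph{multiple of $m$}, so the shortest usable prefix of $s^*$ has length exactly $m$; the argument must show that such a prefix being $(\epsilon,k)$-normal already precludes a trailing all-ones run long enough to let the carry escape beyond the last $m+2$ positions. A secondary technical point is the bookkeeping in step (iii) to ensure the prefix $\hat s$ has length an exact multiple of $m$ while the leftover stays within the claimed bound of $m+2$; this should follow by padding but requires the explicit arithmetic to confirm no off-by-one issue arises.
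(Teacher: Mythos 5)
Your overall strategy is the same as the paper's: a carry analysis showing that the addition only disturbs a run of $1$'s near the end of $s$, plus a frequency argument using the $*$-part of $(\epsilon,k,m)^*$-normality to bound that run, with the numerics $(m-k)/(m-k+1)\ge 2/3 > 2^{-k}+\epsilon$ consuming the hypotheses $m-k\ge 2$ and $\epsilon<1/6$. However, two of your steps fail as written. First, step (ii) is wrong for $s+2$: if $s$ ends in $0(1)^j0$, then the maximal trailing run of $1$'s is $r=0$, yet adding $2$ changes the last $j+2$ digits, because the carry enters at the penultimate position and propagates through the $j$ ones sitting there; this $j$ is not bounded by your step (i), which only controls the trailing run. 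What must be bounded is the run of $1$'s ending at the \emph{penultimate} digit as well as the trailing run. The same reversal argument does this: if that run has length at least $m-1$, the first $m$ digits of $s^*$ are $0(1)^{m-1}$, in which $(1)^k$ occurs with frequency $(m-k)/(m-k+1)\ge 2/3$, exceeding $2^{-k}+\epsilon<2/3$. Indeed this case, not the all-ones case (where the frequency is $1$), is where $m-k\ge 2$ is actually needed. This is precisely the case split the paper makes: $s=s'00$, $s=s'0(1)^j$, or $s=s'0(1)^j0$.

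Second, step (iii) is internally inconsistent: you cannot both pad $\hat t$ so that $|\hat s|$ is a multiple of $m$ and keep $|\hat t|\le m+2$. Padding an altered suffix of length up to $m+2$ so that $|s|-|\hat t|$ is divisible by $m$ can force $|\hat t|$ to be as large as roughly $2m$. The padding is also unnecessary: the lemma does not require the normal piece to have length a multiple of $m$, and any prefix $s'$ of the $(\epsilon,k,m)$-normal string $s$ is automatically $(\epsilon,k,m)$-normal, since every multiple-of-$m$ prefix of $s'$ is a multiple-of-$m$ prefix of $s$. With these two repairs --- bounding both relevant runs of $1$'s, and taking the decomposition to be the unaltered prefix together with the altered suffix of length at most $m+2$, with no padding --- your argument becomes the paper's proof.
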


\begin{proof}
If $s$ decomposes as $s'00$, then $s+1=s'01$ and $s+2=s'10$. In this case, the proof is trivial.

Otherwise, either $s$ decomposes as $s'0(1)^j$, with $s+1=s'1(0)^j$ and $s+2= s'1(0)^{j-1}1$, or $s$ decomposes as $s'0(1)^j0$, then $s+1= s'0(1)^{j+1}$ and $s+2 = s'1(0)^{j+1}$. In each of these two cases, we want to consider how large $j$ could be. Suppose $j\ge m$. By our assumption that $s$ is $(\epsilon,k,m)^*$-normal, we know that the string of the first $m$ digits of $s^*$, which will either be $(1)^m$ or $0(1)^{m-1}$, is $(\epsilon,k)$-normal. However, $(1)^k$ occurs with frequency at least \[
\frac{m-k}{m-k+1}  \ge  \frac{2}{3},
\]
which exceeds 
\[
m((1)^k)+\epsilon< \frac{1}{2}+ \frac{1}{6} = \frac{2}{3}.
\]
Thus $j$ must be strictly less than $m$.
\end{proof}

\subsection{$(\epsilon,u)$-normality and concatenating words}

In this section we will examine how concatening words affects the normality properties of the new words. This will lead us to a general rule which we will use to prove the normality or non-normality of infinite words.

\begin{lem}\label{lem:epsilon-u subwords}
Suppose $w_1,w_2,\dots, w_k$ are all finite $(\epsilon,u)$-normal words of length at least $|u|$, with $\epsilon\le1$. Then any word $v$ which contains $w_1,w_2,\dots, w_k$ as disjoint subwords, is
\[
\left( \epsilon+\frac{ (|u|+2)\left( |v|-\sum_{i=1}^k |w_i|+4k+2\right)}{|v|-|u|-1},u \right)-\text{normal}.
\]
\end{lem}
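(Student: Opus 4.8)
The plan is to count appearances of $u$ in $v$ by comparing the count inside the disjoint copies of the $w_i$ to the total count, and to track how many appearances can fall outside those copies or straddle their boundaries. Concretely, write $\nu_u(v)$ for the number of appearances of $u$ in $v$, and similarly $\nu_u(w_i)$ for each $i$. Every appearance of $u$ in $v$ either lies entirely within one of the $w_i$, or it overlaps the portion of $v$ not covered by the $w_i$ (including appearances that straddle a boundary between a $w_i$ and the surrounding material). The first group contributes $\sum_{i=1}^k \nu_u(w_i)$, and I would bound the second group by an integer quantity measuring the uncovered length plus boundary slack.

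First I would set up the two-sided estimate. For the lower bound on $\nu_u(v)$, simply $\nu_u(v) \ge \sum_{i=1}^k \nu_u(w_i)$, since the $w_i$ are disjoint subwords of $v$ and each appearance of $u$ inside a $w_i$ is an appearance in $v$. For the upper bound, I would argue $\nu_u(v) \le \sum_{i=1}^k \nu_u(w_i) + R$, where $R$ counts appearances of $u$ whose starting position is not strictly interior to one of the $w_i$. The number of such starting positions is at most the number of positions of $v$ not lying sufficiently deep inside some $w_i$: the uncovered length is $|v| - \sum_{i=1}^k |w_i|$, and each of the $k$ copies contributes a boundary zone of width $|u|-1$ on each side where an appearance of $u$ could begin inside $w_i$ but extend outside, giving roughly $2k(|u|-1)$ extra positions, plus the endpoints of $v$. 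Collecting these and comparing with the explicit constant $(|u|+2)(|v|-\sum_i |w_i| + 4k + 2)$ in the statement, I would verify the bookkeeping absorbs all boundary and endpoint contributions into that term (the factor $|u|+2$ generously covering the per-position overcount and the $4k+2$ absorbing the $2k$ boundary zones and endpoint adjustments).

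Next I would convert the count bound into a frequency bound. Each $w_i$ is $(\epsilon,u)$-normal of length at least $|u|$, so $\left|\nu_u(w_i) - m(u)(|w_i|-|u|+1)\right| \le \epsilon(|w_i|-|u|+1)$. Summing over $i$ gives control of $\sum_i \nu_u(w_i)$ in terms of $m(u)\sum_i(|w_i|-|u|+1)$ with total error at most $\epsilon \sum_i(|w_i|-|u|+1)$. I would then compare the normalizing denominator $|v|-|u|+1$ for $v$ against the sum of the local denominators, noting that $\sum_i(|w_i|-|u|+1)$ differs from $|v|-|u|+1$ by exactly the uncovered length minus $(k-1)(|u|-1)$-type corrections, which again feed into the uncovered-length term. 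Dividing through by $|v|-|u|+1$, the leading term reproduces $m(u)$, the normality error of the pieces contributes the $\epsilon$, and every discrepancy coming from uncovered length, boundary overlaps, and the mismatch between the global and summed denominators is bounded by $(|u|+2)(|v|-\sum_i|w_i|+4k+2)\big/(|v|-|u|-1)$.

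The main obstacle I expect is the careful boundary accounting: making sure that appearances of $u$ straddling the junctions between consecutive $w_i$ (or between a $w_i$ and intervening filler) are neither double-counted in the lower bound nor undercounted in the upper bound, and that the slack $4k+2$ and the factor $|u|+2$ are genuinely large enough to absorb both the $2k$ boundary zones of width $|u|-1$ and the denominator mismatch simultaneously. I would handle this by fixing, for the upper bound, a clean rule assigning each appearance of $u$ to the leftmost $w_i$ that strictly contains it (if any) and routing all others into the residual count $R$, and for the lower bound using only strictly-interior appearances; then the difference between the two denominators and the boundary terms can be bounded crudely but uniformly. The use of $\epsilon \le 1$ presumably lets me replace the $\epsilon \sum_i(\cdots)$ error by $\epsilon(|v|-|u|+1)$ plus a further contribution to the uncovered-length term without any loss, simplifying the final division into exactly the claimed form.
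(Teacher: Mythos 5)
Your proposal is correct and follows essentially the same argument as the paper: decompose the occurrences of $u$ in $v$ into those lying entirely inside some $w_i$, those straddling block boundaries (at most $2k(|u|-1)$ of them), and those meeting the uncovered positions (at most $|u|\bigl(|v|-\sum_i|w_i|\bigr)$), then apply the $(\epsilon,u)$-normality of each $w_i$ and absorb the denominator mismatches, using $m(u)\le 1$ and $\epsilon\le 1$, into the term $(|u|+2)\bigl(|v|-\sum_i|w_i|+4k+2\bigr)$. The paper packages the same bookkeeping in two-sided $O$-notation with implicit constant $1$ rather than as separate upper and lower bounds, but the content is identical.
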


Note that for the above lemma to be truly useful, we need to have that $v$ is not much longer than the combined length of the $w_i$'s.

\begin{proof}
We can rewrite the condition that $w_i$ is $(\epsilon,u)$-normal as $\nu_u(w_i) = m(u) (|w_i|-|u|+1)+O(\epsilon(|w_i|-|u|+1))$, with an implicit constant of $1$. Suppose $v$ satisfies the conditions of the lemma. First, note that
\[
\nu_u(v) = \sum_{i=1}^k \nu_u(w_i)+O(2k(|u|-1)) + O\left(|u|\left(|v|-\sum_{i=1}^k |w_i| \right)\right).
\]
The first term accounts for all occurrences of $u$ in $v$ which occur entirely within a single $w_i$. The second term bounds the occurrences of $u$ which occur partially but not entirely within a single $w_i$. There are at most $2(|u|-1)$ positions for a string $u$ to contain elements both inside and outside $w_i$, and there are $k$ such words $w_i$. The third term accounts for any occurrence of $u$ which contains one of the  $|v|-\sum_{i=1}^k |w_i|$  elements of $v$ that do not occur in any $w_i$. As any such element could appear in $|u|$ such strings, we see that the above is true with implicit constant $1$.

Hence, we have that
\begin{align*}
 \nu_u(v) &= \sum_{i=1}^k\left[ \left( m(u) (|w_i|-|u|+1)+O(\epsilon(|w_i|-|u|+1)) \right)\right]+O(2k(|u|-1))\\ &\qquad + O\left(|u|\left(|v|-\sum_{i=1}^k |w_i| \right)\right)\\
 &= m(u) \sum_{i=1}^k |w_i| + O(\epsilon\sum_{i=1}^k |w_i|) +O(4k(|u|-1))\\ &\qquad + O\left(|u|\left(|v|-\sum_{i=1}^k |w_i| \right)\right)\\
 &= m(u) |v|+O(\epsilon |v|) +O(4k(|u|-1)) \\ &\qquad+O\left( (|u|+2)\left( |v|-\sum_{i=1}^k |w_i|\right)\right)\\
 &= m(u)(|v|-|u|+1)+ O(\epsilon(|v|-|u|+1)) + O(2(2k+1)(|u|-1))\\ &\qquad +O\left( (|u|+2)\left( |v|-\sum_{i=1}^k |w_i|\right)\right)\\
  &= m(u)(|v|-|u|+1)+ O(\epsilon(|v|-|u|+1)) \\ &\qquad+ O\left( (|u|+2)\left( |v|-\sum_{i=1}^k |w_i|+4k+2\right)\right).
\end{align*}
This proves the result.
\end{proof}

\begin{prop}\label{prop:normality test}
Suppose that $(w_i)_{i\in\mathbb{N}}$, $(v_i)_{i\in\mathbb{N}}$ are sequences of finite words and consider the infinite alternating concatenation
\[
W= w_1v_1w_2v_2w_3v_3\dots.
\]

If there exists an $\epsilon>0$ and a digit $d$ in our digit set such that for infinitely many different $i$'s, we have that the only digit in $v_i$ is $d$ and that $|v_i|>\epsilon |w_1v_1\dots w_{i-1}v_{i-1}w_i|$, then $W$ is not normal.

If, on the other hand, the following conditions are satisfied:
\begin{itemize}
    \item As $i\to \infty$, we have that \begin{equation*}
        |v_1v_2\dots v_i| = o\left( |w_1v_1\dots w_{i-1}v_{i-1}w_i| \right) . 
    \end{equation*}
    \item There exists a sequence $(\epsilon_i)_{i\in \mathbb{N}}$ of positive reals tending to $0$, a sequence $(\mathcal{U}_i)_{i\in\mathbb{N}}$ of sets of words such that $\mathcal{U}_i\subset \mathcal{U}_{i+1}$ and such that eventually every finite word (in our system) appears in some $\mathcal{U}_i$, and a sequence of positive integers $(m_i)_{i\in \mathbb{N}}$ tending to infinity such that
    \begin{equation}
    \lim_{i\to \infty}  \frac{\sum_{j\le i} m_j}{|w_1\dots w_{i-1}|} =0 \label{eq:normality test bound}
    \end{equation}
    and, finally, such that each $w_i$ is $(\epsilon_i,\mathcal{U}_i,m_i)$-normal.
\end{itemize}
Then $W$ is normal.
\end{prop}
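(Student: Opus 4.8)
The plan is to handle the two halves separately; the non-normality criterion is short, while the normality criterion is the substantive part.

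For the non-normality direction I would exploit the fact that a long constant block $v_i=(d)^{|v_i|}$ overpopulates the word $u=(d)^\ell$ for any fixed $\ell$. Set $\ell_i:=|w_1v_1\cdots w_{i-1}v_{i-1}w_i|$ and consider the prefix of $W$ of length $n_i=\ell_i+|v_i|$, i.e.\ the prefix ending exactly at the end of $v_i$. Since $v_i$ alone contains at least $|v_i|-\ell+1$ occurrences of $(d)^\ell$, and since $|v_i|>\epsilon\ell_i$ forces $\ell_i+|v_i|<(1+\epsilon)\epsilon^{-1}|v_i|$, I get
\[
\frac{\nu_{(d)^\ell}(W,n_i)}{n_i}\ge \frac{\epsilon}{1+\epsilon}\Big(1-\frac{\ell-1}{|v_i|}\Big).
\]
As $\ell_i\to\infty$ along the infinitely many relevant indices $i$, we have $|v_i|>\epsilon\ell_i\to\infty$, so for large such $i$ the right-hand side exceeds $\tfrac{\epsilon}{2(1+\epsilon)}$. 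Because $m((d)^\ell)\to 0$ as $\ell\to\infty$ (by Proposition \ref{prop:Lebesgue to denominator comparison} in the CF case, and trivially as $b^{-\ell}$ in base $b$), I would fix $\ell$ once and for all so that $m((d)^\ell)<\tfrac{\epsilon}{2(1+\epsilon)}$. Then the frequency of $(d)^\ell$ exceeds its target along infinitely many prefixes, so $\lim_n \nu_{(d)^\ell}(W,n)/n\neq m((d)^\ell)$ and $W$ is not normal.

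For the normality direction, fix an arbitrary finite word $u$ and an arbitrary target $\epsilon>0$; it suffices to show $\limsup_n|\nu_u(W,n)/n-m(u)|\le\epsilon$. Since the $\mathcal{U}_i$ eventually contain $u$ and $\epsilon_i\to 0$, choose $I$ so that for all $j\ge I$ we have $u\in\mathcal{U}_j$ and $\epsilon_j<\epsilon$; then each $w_j$ with $j\ge I$ is $(\epsilon,u)$-normal, and moreover $(\epsilon,u,m_j)$-normal, so each of its prefixes of length a multiple of $m_j$ is $(\epsilon,u)$-normal. Given $n$ large, write $W^{(n)}=w_1v_1\cdots w_{k-1}v_{k-1}P$ with $P$ a prefix of $w_kv_k$. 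I would apply Lemma \ref{lem:epsilon-u subwords} to $v=W^{(n)}$ with disjoint $(\epsilon,u)$-normal subwords $w_I,w_{I+1},\dots,w_{k-1}$ together with $P'$, the prefix of $w_k$ whose length is the largest multiple of $m_k$ not exceeding $\min(|P|,|w_k|)$ (dropping $P'$ if it is shorter than $|u|$). The key point, and exactly the reason $(\epsilon,\mathcal{U},m)$-normality rather than mere $(\epsilon,\mathcal{U})$-normality is needed, is that this choice of $P'$ leaves at most $m_k$ digits of the final block uncovered, rather than an arbitrarily large unnormalized prefix.

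It then remains to verify that the error term from Lemma \ref{lem:epsilon-u subwords}, namely $(|u|+2)(n-S+4K+2)/(n-|u|-1)$ with $S$ the total length of the chosen subwords and $K$ their number, tends to $0$. Here $K\le k+1$, and the uncovered length $n-S$ is bounded by a fixed constant $C_I$ (the initial segment through $w_{I-1}v_{I-1}$), plus $\sum_{j<k}|v_j|$, plus the at most $m_k+|v_k|$ uncovered digits in the final region. I would then show each of these is $o(n)$: from $m_j\ge 1$ and hypothesis \eqref{eq:normality test bound} we get $k\le\sum_{j\le k}m_j=o(|w_1\cdots w_{k-1}|)=o(n)$, which also yields $m_k=o(n)$; the first bullet hypothesis gives $\sum_{j<k}|v_j|=o(\ell_{k-1})=o(n)$ and likewise $|v_k|=o(n)$ when $n>\ell_k$ (the only case in which $v_k$ is touched); and $C_I$ is constant once $\epsilon$ is fixed. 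Hence $n-S=o(n)$ and $K=o(n)$, so the error tends to $0$ and $W^{(n)}$ is eventually $(\epsilon',u)$-normal with $\epsilon'\to\epsilon$. Letting $\epsilon\downarrow 0$ gives $\lim_n\nu_u(W,n)/n=m(u)$ for every $u$, i.e.\ $W$ is normal. The main obstacle is the careful bookkeeping of the final partial block and the cross-boundary occurrences of $u$: everything hinges on simultaneously controlling the number of blocks, the cumulative length of the $v_j$, and the rounding loss $m_k$, all against $n$.
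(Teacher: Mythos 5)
Your first half (non-normality) is correct and is essentially the paper's argument verbatim: same choice of prefixes ending at the $v_i$'s, same lower bound $\epsilon/(1+\epsilon)$, same choice of a long constant word $(d)^\ell$ with $m((d)^\ell)$ below that threshold.

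The second half contains a genuine gap at the step where you assert that ``each $w_j$ with $j\ge I$ is $(\epsilon,u)$-normal, and moreover $(\epsilon,u,m_j)$-normal,'' and then feed the \emph{full} words $w_I,\dots,w_{k-1}$ into Lemma \ref{lem:epsilon-u subwords} as $(\epsilon,u)$-normal subwords. The hypothesis only gives $(\epsilon_j,\mathcal{U}_j,m_j)$-normality of $w_j$, which by definition controls only those prefixes of $w_j$ whose length is a multiple of $m_j$; it says nothing about the whole word when $|w_j|$ is not a multiple of $m_j$ (and in the intended application it never is). The trailing segment of up to $m_j-1$ digits is completely unconstrained---if $|w_j|<m_j$, nothing at all is constrained---and this can shift the frequency of $u$ in the full word $w_j$ by an amount comparable to $m(u)$, not comparable to $\epsilon$. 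So the claim is false in general, and the lemma cannot be applied to the untruncated words. You handle exactly this issue for the final word $w_k$ by passing to the prefix $P'$ of length a multiple of $m_k$; the same truncation must be performed on \emph{every} $w_j$ with $I\le j\le k-1$, replacing $w_j$ by its longest prefix of length a multiple of $m_j$. This is precisely what the paper's proof does.

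The repair is cheap and stays within your own argument: truncating all the words adds at most $\sum_{j\le k} m_j$ to the uncovered length $n-S$, and this is exactly why hypothesis \eqref{eq:normality test bound} is stated with the sum $\sum_{j\le i}m_j$ rather than with $m_i$ alone. You already derived the needed estimate $\sum_{j\le k}m_j = o\left(|w_1\cdots w_{k-1}|\right)=o(n)$ (you used it only to bound $k$ and $m_k$), so it absorbs these truncation losses as well. With that correction, your remaining bookkeeping---in particular handling the $v_j$'s inside the error term of Lemma \ref{lem:epsilon-u subwords} directly, rather than first reducing to the case of empty $v_j$'s as the paper does---is sound.
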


\begin{proof}
For the first part of the proposition, we note that regardless of which system we are in, $m((d)^k)$ decays exponentially to $0$ as $k$ tends to infinity. For any $i\in\mathbb{N}$, let $N_i=|w_1v_1\dots w_iv_i|$. Then for an infinite set of $i$'s we have that 
\[
\frac{\nu_{(d)^k}(W^{(N_i)})}{N_i} \ge \frac{|v_i|-k+1}{N_i}\ge \frac{\epsilon}{1+\epsilon}+o(1),
\]
where $o(1)$ is tending to $0$ as $i$ tends to infinity. By choosing $k$ sufficiently large so that $m((d)^k)$ is much smaller than $\epsilon/(1+\epsilon)$, we obtain that $W$ cannot be normal.

For the second part of the proposition, note first that if \[|v_1v_2\dots v_i| = o\left( |w_1v_1\dots w_{i-1}v_{i-1}w_i| \right) ,\] then all the digits in $W$ that come from the $v_i$'s have asymptotic density $0$, and therefore, the normality (or non-normality) of $W$ is unchanged if we assume that the $v_i$'s are all empty words, which we will now assume. For any finite word $u$ and any $\epsilon>0$, there is some $i_0$ such that $u$ appears in all $\mathcal{U}_i$ with $i\ge i_0$, $\epsilon_i\le \epsilon$ for $i\ge i_0$, and $|u|\le |m_i|$ for $i\ge i_0$. Let $n$ be an especially large positive integer and suppose that the $n$th digit of $W$ appears in the word $w_i$ (with $i\ge i_0$). Let $\ell_i m_i$ be the largest integer multiple of $m_i$ such that $w_1\dots w_{i-1}w_i^{(\ell_i m_i)}$ is a prefix of $W^{(n)}$. And for each $j\in [i_0,i)$, let $\ell_j m_j$ be the largest multiple of $m_j$ that is at most the length of $w_j$. Then we may think of $W^{(n)}$ as having $w_j^{(\ell_j m_j)}$, for $j\in [i_0,i]$ as subwords and apply Lemma \ref{lem:epsilon-u subwords}. This tells us that $W^{(n)}$ is $(\epsilon',u)$-normal, where
\[
\epsilon'=\epsilon+\frac{(|u|+2)\left(n-\sum_{j\in[i_0,i]} \ell_jm_j+4(i-i_0+1)+2\right)}{n-|u|-1} .
\]
Let us bound this crudely. We see that
\[
\epsilon'\le \epsilon+ \frac{(|u|+2)\left(|w_1w_2\dots w_{i_0-1}|+\sum_{j\le i}  m_j+4i+6\right)}{|w_1w_2\dots w_{i-1}|-|u|-1}.
\]
However, by our assumptions on the $w_i$'s this latter term must tend to zero as $n$ (and hence $i$) increases. Thus we see that the limiting density of $u$ in $W$ must be $m(u)+O(\epsilon)$. Since $\epsilon$ can be taken arbitrarily small, and since $u$ is an arbitrary word, we see that $W$ is normal in this case.
\end{proof}

The above proposition must be altered if our definition of normality is not $(\epsilon,\mathcal{U})$-normality, but rather $(\epsilon,k)$-normality for base-$b$ normality.

\begin{prop}\label{prop:normality test 2}
Suppose that $(w_i)_{i\in\mathbb{N}}$, $(v_i)_{i\in\mathbb{N}}$ are sequences of finite words in a base-$b$ expansion\footnote{We continue with the notation of the previous proposition even though these should technically be called strings.} and consider the infinite alternating concatenation
\[
W= w_1v_1w_2v_2w_3v_3\dots.
\]

If the following conditions are satisfied:
\begin{itemize}
    \item As $i\to \infty$, we have that \begin{equation}
        |v_1v_2\dots v_i| = o\left( |w_1v_1\dots w_{i-1}v_{i-1}w_i| \right) . \label{eq:normality test bound v's}
    \end{equation}
    \item There exists a sequence $(\epsilon_i)_{i\in \mathbb{N}}$ of positive reals tending to $0$, a sequence $(k_i)_{i\in\mathbb{N}}$ of positive integers tending to infinity, and a sequence of positive integers $(m_i)_{i\in \mathbb{N}}$ tending to infinity such that
    \begin{equation*}
    \lim_{i\to \infty}  \frac{\sum_{j\le i} m_j}{|w_1\dots w_{i-1}|} =0,
    \end{equation*}
    and
        \begin{equation}
    \lim_{i\to \infty}    \frac{2k_i}{m_i-k_i+1}+ b^{k_i} \epsilon_i =0, \label{eq:normality test bound 2}
    \end{equation}
    and, finally, such that each $w_i$ is $(\epsilon_i,k_i,m_i)$-normal.
\end{itemize}
Then $W$ is normal.
\end{prop}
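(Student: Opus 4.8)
The plan is to adapt the argument for the second half of Proposition \ref{prop:normality test}, the only genuinely new point being a quantitative passage from control of \emph{all} blocks of a fixed length $k_i$ to control of a single fixed word $u$. As a first step I would reduce to the case in which every $v_i$ is the empty word: by the density hypothesis \eqref{eq:normality test bound v's} the digits contributed by the $v_i$'s have asymptotic density $0$ in $W$, so deleting them alters the frequency of any fixed word by $o(1)$ and changes neither normality nor its failure. Henceforth write $W = w_1 w_2 w_3 \cdots$.

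The key new ingredient is the following comparison: if a base-$b$ string $w$ is $(\epsilon,k)$-normal and $\ell \le k$, then $w$ is $(\epsilon',\ell)$-normal with $\epsilon' = O(b^{\,k-\ell}\epsilon) + O(k/(|w|-\ell+1))$. To see this, fix $u$ with $|u|=\ell$ and observe that every occurrence of $u$ at a position $p \le |w|-k+1$ extends uniquely to a length-$k$ block having $u$ as a prefix, and conversely; hence
\[
\nu_u(w) = \sum_{\substack{|V|=k\\ u \text{ a prefix of } V}} \nu_V(w) + O(k),
\]
the $O(k)$ term accounting for the at most $k-\ell$ occurrences of $u$ in the final positions. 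Since there are $b^{\,k-\ell}$ words $V$ and each satisfies $\nu_V(w) = b^{-k}(|w|-k+1)+O(\epsilon(|w|-k+1))$, summing and dividing by $|w|-\ell+1$ yields the claim. This is exactly the source of the two summands in \eqref{eq:normality test bound 2}: taking $k=k_i$ and $\ell=|u|\le k_i$, the first error becomes $O(b^{k_i}\epsilon_i)$, while for a prefix of length a multiple of $m_i$ (hence of length at least $m_i$) the second error is $O(k_i/(m_i-k_i+1))$.

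With this in hand the remainder mirrors Proposition \ref{prop:normality test}. Fix $u$ with $\ell = |u|$ and $\epsilon>0$, and choose $i_0$ so large that for all $j \ge i_0$ we have $m_j > k_j \ge \ell$ and, by the comparison above combined with \eqref{eq:normality test bound 2}, every prefix of $w_j$ of length a multiple of $m_j$ is $(\epsilon,u)$-normal. For large $n$ with the $n$th digit of $W$ lying in $w_i$, decompose $W^{(n)}$ using the disjoint subwords $w_j^{(\ell_j m_j)}$ for $i_0 \le j \le i$, where $\ell_j m_j$ is the largest multiple of $m_j$ not exceeding $|w_j|$ when $j < i$ and the largest multiple with $w_1\cdots w_{i-1} w_i^{(\ell_i m_i)}$ a prefix of $W^{(n)}$ when $j=i$. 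Each of these subwords is $(\epsilon,u)$-normal of length at least $|u|$, so Lemma \ref{lem:epsilon-u subwords} gives that $W^{(n)}$ is $(\epsilon'',u)$-normal with
\[
\epsilon'' = \epsilon + \frac{(|u|+2)\left(n - \sum_{j\in[i_0,i]}\ell_j m_j + 4(i-i_0+1)+2\right)}{n-|u|-1}.
\]

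Finally I would drive the error term to $0$. The uncovered length $n-\sum_{j\in[i_0,i]}\ell_j m_j$ is at most $|w_1\cdots w_{i_0-1}| + \sum_{j\le i} m_j$, being the fixed initial block together with one leftover piece of length less than $m_j$ from each stage, while $4(i-i_0+1)+2 = O(i) = O(\sum_{j\le i} m_j)$ since every $m_j \ge 1$; and the denominator is at least $|w_1\cdots w_{i-1}|-|u|-1$. The finiteness of $|w_1\cdots w_{i_0-1}|$ together with the hypothesis $\lim_{i\to\infty}\big(\sum_{j\le i}m_j\big)/|w_1\cdots w_{i-1}| = 0$ forces this fraction to $0$, so $\epsilon'' \to \epsilon$ as $n\to\infty$. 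As $\epsilon>0$ is arbitrary, the frequency of $u$ in $W$ converges to $m(u)=b^{-\ell}$, and as $u$ is arbitrary, $W$ is base-$b$ normal. The one delicate point is the comparison lemma and making its two error terms align precisely with the two summands of \eqref{eq:normality test bound 2}; the rest is routine bookkeeping transplanted from the previous proposition.
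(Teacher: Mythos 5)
Your proposal is correct and follows essentially the same route as the paper: the paper likewise reduces to the argument of Proposition \ref{prop:normality test} and supplies exactly your comparison step, writing $\nu_u(w)=\sum_{|v|=k-\ell}\nu_{uv}(w)+O(k)$ to show an $(\epsilon,k)$-normal word is $\left(\frac{2k}{|w|-k+1}+b^{k}\epsilon,\ell\right)$-normal, which is then absorbed by \eqref{eq:normality test bound 2}. Your version of the error term (with $b^{k-\ell}$ in place of $b^{k}$) is marginally sharper but immaterial, and your explicit bookkeeping at the end matches what the paper leaves implicit.
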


\begin{proof}
The proof is nearly identical to that of the second part of Proposition \ref{prop:normality test}. The only issue with that proof is that any given word $u$ appears in all $\mathcal{U}_i$ once $i$ is big enough, whereas a given $u$ has a fixed length and so is only directly addressed by $(\epsilon_i,k_i,m_i)$-normality for finitely many $i$'s. To remedy this we need a rule which tells us how $(\epsilon,k)$-normality implies $(\epsilon',\ell)$-normality for $\ell< k$.

Suppose a word $w$ is $(\epsilon,k)$-normal, and let $u$ be a word of length $\ell$ which is strictly less than $k$. Then we have
\begin{align*}
    \nu_u(w) &= \sum_{|v|=k-\ell} \nu_{uv}(w) +O(k)\\
    &= \sum_{|v|=k-\ell}  \left( m(uv)(|w|-|uv|+1)+O(\epsilon(|w|-|uv|+1))  \right)+O(k)\\
    &= m(u)(|w|-k+1)+O(b^{k-\ell}\epsilon (|w|-k+1)) +O(k)\\
    &= m(u)(|w|-|u|+1)-m(u)(k-\ell)+O(b^{k}\epsilon (|w|-k+1)) +O(k)\\
    &= m(u)(|w|-|u|+1)+O(b^{k}\epsilon (|w|-k+1)) +O(2k),
\end{align*}
where all the big-O constants are at most $1$. The $O(k)$ comes from the fact that the appearances of $u$ in the last $k$ places of $w$ might not be counted in the sum. So in particular, the word $w$ is also
\[
\left( \frac{2k}{|w|-k+1}+ b^k \epsilon , \ell\right)\text{-normal.}
\]
Combining this with \eqref{eq:normality test bound 2} completes the necessary changes to the proof.
\end{proof}

\section{Proof of Theorem \ref{thm:main}}

\subsection{Initial set-up} We will follow the basic ideas outlined in Section \ref{sec:outline}. Let $z\in \omega^\omega$ be arbitrary. We will construct a continuous map $\phi$ from $\omega^\omega$ to $(0,1)$ such that $\phi(z)$ is CF-normal and absolutely abnormal if and only if $z\in C\setminus D$, i.e., if and only if $z$ tends to infinity on its odd indices but returns to some number infinitely often on its even indices. We will construct $\phi(z)$ based on its CF expansion so that the prefix of $z$ will determine the prefix of the CF expansion of $\phi(z)$ to guarantee that $\phi$ is continuous.

Let $\Delta=\{(i,j):i\in\mathbb{N}, 1\le j \le i\}\subset \mathbb{N}^2$. For $(i,j)\in\Delta$, let $b_{i,j}$ be the $j$th prime number. We will think of $(b_{i,j})_{(i,j)\in\Delta}$ as a sequence ordered  by the lexicographical order on $\Delta$. In particular, if $(i,j)\in\Delta$, then, abusing notation, we will let $(i,j+1)$ denote the successive element under the lexicographical order. So, if $j=i$, then $(i,j+1)=(i+1,1)$. Likewise $(i,j-1)$ will denote the preceding element.

We will let $(\epsilon_{i,j})_{(i,j)\in\Delta}$ be a sequence of positive real numbers tending to zero along $\Delta$, all sufficiently small so that $\epsilon_{i,j}\le \lambda_{\text{KL}}/2$ and satisfying that for any $C>1$ we have that
\begin{equation}
    \lim_{(i,j)\to \infty} C^i \epsilon_{i,j}=0, \label{eq:epsilon decay req}
\end{equation}
and we will let $(\mathcal{A}_{i,j})_{(i,j)\in\Delta}$ be a sequence of finite collections of blocks in $\mathbb{N}^*$ satisfying that $\mathcal{A}_{i,j}\subseteq \mathcal{A}_{i,j+1}$ and that $\bigcup_{(i,j)\in\Delta} \mathcal{A}_{i,j}=\mathbb{N}^*$. These sequences will be fixed regardless of our choice of $z$.

For each $(i,j)$, we will construct a block $\overline{B}_{i,j}$ based on the prior blocks $\overline{B}_{1,1},\dots,$ $ \overline{B_{i,j-1}}$ and on $\epsilon_{i,j},\epsilon_{i,j+1},\mathcal{A}_{i,j},\mathcal{A}_{i,j+1}, z(2i-1)$ and $z(2i)$.  (Note: the reason for basing $\overline{B}_{i,j}$ on $\epsilon_{i,j+1}$ and $\mathcal{A}_{i,j+1}$ is to make sure that $\overline{B}_{i,j}$ is long enough to ensure that the subsequent block $\overline{B}_{i,j+1}$ can be constructed with the desired properties.) The CF expansion of $\phi(z)$ will be the concatenation of these blocks in order. 

For convenience, we will denote by $\tilde{B}_{i,j}$ the concatenation of all blocks up to (but \emph{not} including) $\overline{B}_{i,j}$. We will let $\tilde{N}_{i,j}$ denote $|\tilde{B}_{i,j}|$.

To begin with, we will simply assume that $\overline{B}_{1,1}=(1)^{N_0}$, for some positive integer $N_0$ that is sufficiently large. We will describe later the exact properties we wish $N_0$ to satisfy, but note here that this can be selected to be independent of $z$.

Now that we have our initial block, we will proceed with the iteration. Given $\tilde{B}_{i,j}$ we want to construct $\overline{B}_{i,j}$. We will drop the cumbersome subscripts and let $\tilde{B}$ just denote $\tilde{B}_{i,j}$. Likewise, we let $b=b_{i,j}$, $\tilde{N}=\tilde{N}_{i,j}$, $\epsilon=\epsilon_{i,j}$, and $\mathcal{A}=\mathcal{A}_{i,j}$. 
Also, let $n=\lfloor \sqrt{\tilde{N}}\rfloor$.

\subsection{Constructing a block $B$}

In order to construct $\overline{B}$, we will first construct a block $B$. The block $B$ will represent an initial ideal choice with good normality properties, both in the base-$2$ expansion and continued fraction expansion. In the next section we will alter $B$ to get $\overline{B}$, in order to potentially break normality as $z$ dictates.

The block $B$ will be based on an integer $N$ that is chosen to be sufficiently large to make the desired inequalities later in the proof hold true. We will also need a variable $m$ associated to $N$, which is given by  $m:= \lfloor \exp((\lambda_{\text{KL}}+\epsilon)N)\rfloor$. We will think of $N$ as a count for a number of digits, and $m$ as a denominator size, but we note that the typical fraction with denominator $m$ would be expected to have slightly more than $N$ digits.

For the most part the inequalities we want $m$ and $N$ to satisfy will be direct, and the ability to choose $N$ to satisfy them will be clear. There are a few exceptions which we point out here. First, we require that $N$ is large enough so that
\begin{align}
    &8L \log 2 (5(\lambda_{\text{KL}}+\epsilon_{i,j+1})N+\log_{\phi} b_{i,j+1}+1)
     \xi(\epsilon_{i,j+1},\mathcal{A}_{i,j+1}) \times\\ & \qquad \times\exp\left(-\eta(\epsilon_{i,j+1},\mathcal{A}_{i,j+1}) \frac{\lfloor \sqrt{N}\rfloor}{\log \lfloor \sqrt{N}\rfloor} \right)\le \frac{(b_{i,j+1}-1) }{10b_{i,j+1}}, \label{eq:xieta next step bound}    \end{align}
where $L$ was the bound derived from Renyi's condition. (We also want to choose $N_0$ large enough so that the above equation holds with $N=N_0$ and $i=j=1$.) Second, we require that $N$ is large enough so that 
\begin{equation}
Ci 2^{-\delta \sqrt{N}} \le \frac{(b_{i,j+1}-1)}{5b_{i,j+1}}, \label{eq:condition (E) at next step}
\end{equation}
where $C$ is the implicit constant in Lemma \ref{lem:Copeland-Erdos fraction variant} and $\delta= \delta(\epsilon_{i,j+1},j+1,2)$; moreover, we want $N$ to be sufficiently large that when, in the context of Lemma \ref{lem:Copeland-Erdos fraction variant}, we take $m$ to be $\lfloor \sqrt{N}\rfloor$, $k$ to be either $i$ or $i+1$, and $\epsilon$ to be $\epsilon_{i,j+1}$, then $m$ satisfies the ``sufficiently large" condition for the lemma to apply. Third, we assume that
\begin{equation}
    \left\lfloor \sqrt{N}\right\rfloor \ge 2\left(\max_{A\in \mathcal{A}_{i,j+1}} |A|+1\right). \label{eq: Lemma 4.2 cond at next step}
\end{equation}
Finally, we will also assume that $N\ge i \tilde{N}_{i,j}$.

We will eventually show that when we construct $\overline{B}_{i,j}$, we get that the length of $\tilde{B}_{i,j+1}$ is at least the value of $N$ chosen above. In particular, inequalities \eqref{eq:xieta next step bound} and \eqref{eq:condition (E) at next step} may be assumed to hold with $N$ replaced with $\tilde{N}_{i,j+1}$ for all values of $(i,j)\in \Delta$.


We want to find a block $B$ that satisfies the following conditions:
\begin{enumerate}
    \item[(A)] Let $d$ be the smallest power of $b$ such that $m^2<d$. (Note that $d/b\le m^2<d$, which will match our condition in Lemma \ref{lem:splitter}.) Then $q(\tilde{B}B)=d$.
    \item[(B)] Let $B'$ be the prefix of $B$ such that $r_{\tilde{B}B'}=r_{\tilde{B}B}(m)$. Then $B'$ is the concatenation of an $(\epsilon,\mathcal{A},n)$-normal, $(\epsilon,\lfloor \sqrt{N}\rfloor)$-denominator normal block of length at least $ N-\tilde{N}$ and a block  of length at most $5$.
    \item[(C)] Let $B''$ be the prefix of $B^*$ such that $r_{B''} = r_{(\tilde{B}B)^*}(m)$. Then $(B'')^*$ is the concatenation of a block of length at most $5$ and an $(\epsilon,\mathcal{A},\lfloor \sqrt{N}\rfloor)$-normal, $(\epsilon,\lfloor \sqrt{N}\rfloor)$-denominator normal block of length at least $N$.
    \item[(D)] If $b=2$, then let $a/d= r_{\tilde{B}B}$, and let $c_0,c_1$ be strings such that $|c_0| = L_b(C_{\tilde{B}})$, and $c_0c_1=a$ , when $a$ is seen as an appropriate binary string. Then we have that $c_1$ is $(\epsilon,i,n)^*$-normal.
    \item[(E)] If $b\neq 2$, then let $M=d^i$ and let $j'=\lceil \log_2 d \rceil-L_2(C_B)$. Let $c_0c_1\dots c_t=S_2(C_{\tilde{B}BM})$ be such that $|c_0|= L_2(C_B)$,  $|c_1|=|c_2|=|c_3|=\dots =|c_{t-1}|=j'$, $|c_t|\le j'$. Then all of $c_1,c_2,\dots, c_{t-1}$ are $(\epsilon,i,n)^*$-normal.
\end{enumerate}
To find this block, we will count the total number of blocks that satisfy all of these conditions, and show that it is greater than $1$. We will then let $B$ be the smallest (lexicographically) such block, where we define which block we are choosing merely to make a consistent choice that will remain constant when large-index values of $z$ change, so that the function $\phi$ is continuous.

Note, moreover that the choice of $m$ will be larger than $q(\tilde{B})$, so, say, $B'$ being chosen to satisfy $r_{\tilde{B}B'}=r_{\tilde{B}B}(m)$ is well-defined.

First, note that the number of fractions of the form $a/d$ in lowest terms in $C_{\tilde{B}}$ is equal to the number of all fractions of the form $a/d$ in $C_{\tilde{B}}$ minus the number of fractions of the form $ab/d$ in $C_{\tilde{B}}$, since we chose $d$ to be a power of $b$. In other words, this is
\[
\left( d\cdot \lambda(C_{\tilde{B}})+O(2)\right) - \left( \frac{d}{b}\cdot \lambda(C_{\tilde{B}})+O(2)\right) =\frac{d(b-1)\lambda(C_{\tilde{B}})}{b}+O(4),
\]
where the implicit constant is $1$. This counts the number of blocks that satisfy condition (A).

For conditions (B) and (C), we will shift the question to the set of Farey fractions. We start with (B). Consider a block $B$ satisfying condition (A) and let $B'$ be the prefix of $B$ such that $r_{\tilde{B}B'}=r_{\tilde{B}B}(m)$.  Let $P/Q$ be the largest fraction in $\mathcal{F}_m$ that is less than $r_{\tilde{B}B}$, and let $\tilde{B}B_p$ be the penultimate prefix of the shortest way of writing the CF expansion of $P/Q$ (``p" for penultimate or prefix). By Lemma \ref{lem:compare point to Farey} and Remark \ref{rem:Farey interval to CF interval}, $B'$ is the concatenation of $B_p$ and a block of length at most $5$. This is not necessarily true if $P/Q$ lies outside $C_{\tilde{B}}$, because then it will not be possible to write $P/Q$ in the desired way; however, there is at most one such $P/Q$, and by Proposition \ref{prop:total points in Fareys}, the number of $a/d$ that have this as a preceding fraction is at most
\[
4 d \left( r_{\tilde{B}B} - \frac{P}{Q}\right) = \frac{4d}{Qq(\tilde{B}B)} \le \frac{8d}{m},
\]
where we have twice the constant from that proposition due to $a/d$ having two possible CF expansions. And this bound is negligible compared to the number of terms satisfying condition (A) if we choose $N$ (and hence $m$) sufficiently large, so we will assume any such $B'$ that arises in this way fails condition (B). Now, if $B_p$ is $(\epsilon,\mathcal{A},n)$-normal, $(\epsilon,\lfloor \sqrt{N}\rfloor)$-denominator normal, and $|\tilde{B}B_p|\ge N$, then it is clear that $B$ satisfies condition (B). Let $U$ denote the set of $P/Q\in \mathcal{F}_m\cap C_{\tilde{B}}$ such that the corresponding $B_p$ is either non-$(\epsilon,\mathcal{A},n)$-normal, non-$(\epsilon,\lfloor \sqrt{N}\rfloor)$-denominator normal, or $|\tilde{B}B_p|<N$. Then, by Proposition \ref{prop:total points in Fareys} again, the total number of blocks $B$ satisfying condition (A) but failing condition (B) is at most
\begin{equation}\label{eq:cond A but not B bound}
4 d\cdot \lambda\left( \bigcup_{P/Q\in U} \left\{ x\middle| \frac{P}{Q}\le x\le \frac{P'}{Q'}\right\}\right).
\end{equation}
The bound is twice what was in the proposition due to a rational number having two possible CF expansions.

By Lemma \ref{lem:BV variant}, the number of fractions $P/Q$ in $\mathcal{F}_m\cap C_{\tilde{B}}$ whose corresponding $\tilde{B}B_p$ has length strictly less than $N$ is $O\left(
\frac{m^2}{\sqrt{N}}\right)
$.
By Lemma \ref{lem:AB Lemma 4}, we see that the contributions of such terms to \eqref{eq:cond A but not B bound} is bounded by
\[
O\left( d\cdot \frac{1}{m} \sqrt{\frac{m^2}{\sqrt{N}}} \right) =O\left(  \frac{ d}{N^{1/4}}\right).
\]

Let $\mathcal{B}$ denote the set of all blocks $B_p$ arising in the above way that are \emph{not} $(\epsilon,\mathcal{A},n)$-normal. Then, applying \eqref{eq:Renyi-condition} and Lemma \ref{lem:Scheerer-extended} (and making use of \eqref{eq: Lemma 4.2 cond at next step} to ensure the conditions of the lemma are met), we have that
\begin{align}
\mu\left( \bigcup_{B_p\in \mathcal{B}} C_{\tilde{B} B_p}\right) &\le L \mu\left( C_{\tilde{B}}\right) \mu\left(\bigcup_{B_p\in \mathcal{B}} C_{ B_p} \right) \\
&\le L \mu\left( C_{\tilde{B}}\right) \xi \exp\left( - \eta \frac{n}{\log n} \right),\label{eq:CtildeBB bound}
\end{align}
where $\xi=\xi(\epsilon,\mathcal{A})$ and $\eta=\eta(\epsilon,\mathcal{A})$ are as in Lemma \ref{lem:Scheerer-extended}. Thus, remembering that Lebesgue and Gauss measure are within a constant multiple of each other, the contribution to \eqref{eq:cond A but not B bound} arising from such blocks is 
\begin{equation}
\le 8L\log 2 \cdot  d \cdot \lambda \left( C_{\tilde{B}}\right) \xi \exp\left( - \eta \frac{n}{\log n}\right). \label{eq:cond A but not B bound 2}
\end{equation}
By \eqref{eq:xieta next step bound}, this is no more than \[
\frac{d(b-1)\lambda(C_{\tilde{B}})}{10b}.
\]

Next, note that if $r_{\tilde{B}B'}\in \mathcal{F}_m$, then we also have that $r_{B'}\in\mathcal{F}_m$, and the number of the latter fractions that are non-$(\epsilon,\lfloor \sqrt{N}\rfloor)$-denominator normal is at most $O(m^2\log N/\sqrt{N})$ by Proposition \ref{prop:bound on denominator normal}.  By Lemma \ref{lem:AB Lemma 4} again, the contribution to \eqref{eq:cond A but not B bound} is $O(d\sqrt{\log N}/N^{1/4})$.

Combined, we see that we can choose $N$ large enough so that the total number of blocks that satisfy condition (A) but not condition (B) is at most 
\[
\frac{d(b-1)\lambda(C_{\tilde{B}})}{5b}.
\]

The method of counting blocks that satisfy condition (A) but not condition (C) proceeds in a similar fashion. The major differences are as follows. First, it is now the last $5$ digits of $B''$ and hence the first $5$ digits of $(B'')^*$ that are the ones we have no control over. Second, when we bound the number of non-$(\epsilon,\lfloor \sqrt{N}\rfloor)$-denominator normal blocks, we need to use not only Proposition \ref{prop:bound on denominator normal}, but the remark following it as well.  Finally, we want to apply Lemma \ref{lem:Scheerer reversed} instead of Lemma \ref{lem:Scheerer-extended}, with $K$ equal to the maximum possible length of a CF expansion with denominator $d$. As is implied in the proof of Lemma \ref{lem:CF to base b digits}, this gives $K=\lceil \log_\phi d\rceil$. By the various relations between $d,m,N$, we see that
\[
K\le \log_\phi (m^2 b) + 1 \le 5(\lambda_{\text{KL}}+\epsilon)N+\log_\phi b + 1.
\]
This results in a bound of size
\[
O\left( d N \xi \exp\left( - \eta \frac{\lfloor \sqrt{N}\rfloor}{\log \lfloor \sqrt{N}\rfloor}\right)\right),
\]
which lacks the $\lambda(C_{\tilde{B}})$ factor from  \eqref{eq:cond A but not B bound 2}, has an additional $N$ factor, and also uses  $\lfloor \sqrt{N}\rfloor$ in place of $n$. However, by \eqref{eq:xieta next step bound}, we see that by choosing $N$ sufficiently large, we can bound all the blocks that satisfy condition (A) but not condition (C) by
\[
\frac{d(b-1)\lambda(C_{\tilde{B}})}{5b}
\]
again.

We pause before continuing on and note that due to Lemma \ref{lem:splitter}, we do not necessarily have that a block $B$ satisfying conditions (A), (B), and (C) can be written as the concatenation of one $(\epsilon,S,n)$-normal block of length $N$ and one $(\epsilon,S,\lfloor\sqrt{N}\rfloor)$-normal block of length $N$. Instead, we can write $B$ as a concatenation of an $(\epsilon,S,n)$-normal block of length  at least $N-1$, a block of bounded length, and another $(\epsilon,S,\lfloor \sqrt{N}\rfloor)$-normal block of length at least $N$. (The reason for the $N-1$ bound on the first block is that if $|B'|+|B''|-|B|=1$, then we will want to consider the penultimate prefix of $B'$ rather than the entirety of it as an $(\epsilon,S,n)$-normal block.)

Now consider the number of blocks which satisfy condition (A) but not condition (D). It is possible that $c_0$ is not consistent over all such blocks but it can take at most one of 3 separate values by Remark \ref{rem:base-b digit variants}. The number of possible $c_1$ that can occur and not be $(\epsilon,i,n)$-normal is at most $O(2^{|c_1|-\delta n})$ by Lemma \ref{lem:Copeland-Erdos var}, with $\delta=\delta(\epsilon,i,b)$.  If we multiply the total number of non-$(\epsilon,i,n)$-normal $c_1$'s by the number of times they can occur (one for each possibility for $c_0$), we get that there are still at most $O(2^{|c_1|-\delta n})$ blocks which satisfy condition (A) but not condition (D).  Note that $|c_0|=L_2(C_{\tilde{B}})= -\log_2 \lambda(C_{\tilde{B}})+O(1)$ and $|c_0c_1|= \log_2 d $. So therefore, in this case, we have that the number of blocks which satisfy condition (A) but not condition (D) is at most
\[
O\left(  \frac{\lambda(C_{\tilde{B}}) d}{2^{\delta n}} \right).
\]
Since the implicit constants here are uniform,  we will make sure we have chosen $N_0$ large enough at the initial stage and each $N$ at the iterative stage, so that this will always be at most
\[
\frac{d(b-1)\lambda(C_{\tilde{B}})}{5b}.
\]

When $b\neq 2$, we must count how many blocks satisfy condition (A) but not condition (E). This is, however, completely answered by Lemma \ref{lem:Copeland-Erdos fraction variant}. Namely, provided $N$ is large enough, there are at most $Ci \lambda(C_{\tilde{B}}) d 2^{-\delta n}$ such blocks, where $\delta=\delta(\epsilon,j,2)$ and $C$ is the implicit constant in the lemma. However by \eqref{eq:condition (E) at next step}, we see that this is always at most 
\[
\frac{d(b-1)\lambda(C_{\tilde{B}})}{5b}
\]
again.

We thus see from the above that the number of blocks that satisfy condition (A) but not condition (B) (or (C) or (D) or (E)) is at most $4/5$th of the number of blocks that satisfy just condition (A). So, provided $N$ is large enough, this tells us there must be at least one block that satisfies all the given conditions. Again, we will chose the lexicographically smallest such block to call $B$ from here on.

\subsection{Constructing $\overline{B}$ from $B$}
We will  construct $\overline{B}$ from $B$ in two very different ways depending on whether $b=2$ or not. 

Let us first consider the case where $b=2$. In this case we will pick $\overline{B}$ to be the longest prefix of $B$ such that 
\begin{equation}\label{eq:bound for overline-B}
L_2(C_{\tilde{B}\overline{B}}) < \left(\log_2 d \right)\left( 1+ \frac{1}{z(2i)+1} \right).
\end{equation}
We claim that, in fact,
\[
L_2(C_{\tilde{B}\overline{B}}) = \left(\log_2 d \right)\left( 1+ \frac{1}{z(2i)+1} +O(\epsilon)\right).
\]

\begin{proof}[Proof of claim]
We note that if we let $B'$ be the prefix of $B$ from condition (B) above, then  
\begin{align*}
L_2(C_{\tilde{B}B'})&= -\left\lceil \log_2 \lambda(C_{\tilde{B}B'}) \right\rceil \le - \left\lceil \log_2 \left( \frac{1}{m^2}\right) \right\rceil\\
&\le \log_2 (m^2)  = \log_2(d).
\end{align*}
Therefore, at its shortest, $\overline{B}$ is simply $B'$.

\begin{rem}
By condition (B), this implies that in the case $b=2$, $\overline{B}_{i,j}$ has length at least $N-\tilde{N}$ and hence $\tilde{B}_{i,j+1}$ has length at least $N$, as desired. 
\end{rem}

By conditions (B) and (C), we have very fine control over the growth of the denominators in $B'$ and $B''$, but as mentioned above, $B$ could be formed as a concatenation $B_1 B_2 B_3$, where $B_1, B_3$ have the desired normality properties (namely $(\epsilon,\lfloor\sqrt{N}\rfloor)$-denominator normality) and $B_2$ has uniformly bounded length.  However, it could be that $B_2$ or the very end of $B_1$ or $B_3$ has extremely large digits and we wish to show now that this will not be the case. We decompose $B_1$ as $B_1'B_1''$ where $B_1'$ is the longest prefix of $B_1$ whose length is a multiple of $\lfloor \sqrt{N}\rfloor$, and likewise decompose $B_3$ as $B_3' B_3''$ where again $B_3'$ is the longest prefix of $B_3$ whose length is a multiple of $\lfloor \sqrt{N}\rfloor$. We then let $B_2'= B_1''B_2$ and $B_4'=B_3''$, so that $B=B_1'B_2'B_3'B_4'$. Apply Proposition \ref{prop:product of denominators} multiple times, we have that
\begin{align*}
d^{-2} &\asymp \lambda(C_{\tilde{B}B}) \asymp \lambda(C_{\tilde{B}}) \lambda(C_{B_1'})\lambda(C_{B_2'})\lambda(C_{B_3'})\lambda(C_{B_4'}) \\
&\asymp \frac{\lambda(C_{\tilde{B}}) }{q(B_1')^2q(B_2')^2q(B_3')^2q(B_4')^2}.
\end{align*}
After rearranging, we get
\[
q(B_2')q(B_4') \ll \frac{\sqrt{\lambda(C_{\tilde{B}})} \cdot d}{q(B_1')q(B_3')}.
\]
By our assumptions on the denominator-normality of $B_1'$ and $B_3'$, we have that 
\begin{align*}
q(B_1')q(B_3')&\ge \exp\left( (N-\tilde{N}-\sqrt{N})(\lambda_{\text{KL}}-\epsilon)\right) \cdot \exp\left( (N-\sqrt{N})(\lambda_{\text{KL}}-\epsilon)\right)\\
&= \exp\left( 2 (N-\sqrt{N})(\lambda_{\text{KL}}-\epsilon)\right) \exp(-\tilde{N}(\lambda_{\text{KL}}-\epsilon)).
\end{align*}
At the same time, we have that
\begin{align*}
d&\le 2m^2\le 2\exp\left( 2(\lambda_{\text{KL}}+\epsilon)N)\right)
\end{align*}
As a result, we have that
\[
q(B'_2)q(B'_4) \ll \sqrt{\lambda(C_{\tilde{B}})} \exp\left( 4\epsilon N +\tilde{N}(\lambda_{\text{KL}} -\epsilon)+2\sqrt{N}(\lambda_{\text{KL}} -\epsilon) \right),
\]
and provided $N$ is sufficiently large, we get that
\[
q(B'_2) q(B'_4)\le \exp(5\epsilon N).
\]

Consider $B_3'$ again. Let $q$ be any fixed positive integer less than $q(B_3')$, and let $\ell$ be the largest integer such that 
\[
\exp\left( \ell \lfloor \sqrt{N}\rfloor (\lambda_{\text{KL}} +\epsilon)\right) \le q.
\]
If we let $B_{3,p}$ be the prefix of $B_3'$ such that $|B_{3,p}|=\ell\lfloor \sqrt{N}\rfloor$, then by the $(\epsilon,\lfloor\sqrt{N}\rfloor)$-denominator normality of $B_3'$, we see that 
\[
\frac{q}{q(B_{3,p})} \le \frac{\exp\left(( \ell+1) \lfloor \sqrt{N}\rfloor (\lambda_{\text{KL}} +\epsilon)\right)}{\exp\left( \ell \lfloor \sqrt{N}\rfloor (\lambda_{\text{KL}} -\epsilon)\right)}\le \exp\left( \lfloor \sqrt{N}\rfloor (\lambda_{\text{KL}}+\epsilon) +2\ell \lfloor \sqrt{N}\rfloor \epsilon\right).
\]
By the properties outlined in condition (C) above, we must have that $q(B_3')\le m$, and thus that $\ell\le N/\lfloor\sqrt{N}\rfloor$. So therefore, by choosing $N$ sufficiently large we can bound $q/q(B_{3,p})$ by $\exp(3\epsilon N)$ for all $q$.

By the above two paragraphs, combined with Proposition \ref{prop:product of denominators}, we see that for any $q$ between $q(B')$ and $q(B)$, we can choose a prefix $B_p$ of $B$ satisfying 
\[
q \ge q(B_p)\ge q\cdot \exp(-8\epsilon N),
\]
provided $N$ is sufficiently large. Recalling \eqref{eq:bound for overline-B}, and our correspondence between $L_2(C_\cdot)$ and $q(\cdot)$, we see that our desired block $\overline{B}$ satisfies 
\[
L_2(C_{\tilde{B}\overline{B}}) = (\log_2 d) \left( 1+ \frac{1}{z(2i)+1}+O(\epsilon)\right),
\]
as desired.
\end{proof}

Now recall that $a/d=r_{\tilde{B}B}$. So therefore, $a/d$ must belong to the cylinder set $C_{\tilde{B}\overline{B}}$. In particular, the binary expansion of any element in $C_{\tilde{B}\overline{B}}$ must consist of the $\log_2 d$ digits of $a$ (or $a-1$), followed by a string of $(\log_2 d)((z(2i)+1)^{-1}+O(\epsilon))$ repeating $0$'s or $1$'s. This will be important in the next section.

Now consider the case where $b\neq 2$. 
In this case, we define $\overline{B}:=BM(1)^K$, where $M=d^i$ and 
\[
K= \left\lceil \frac{|\tilde{B} B|}{z(2i-1)} \right\rceil.
\]
Note that we are including $K$ copies of the single digit 1. (Again, this implies that $\tilde{B}_{i,j+1}$ will have length at least $N$ as desired.)

\subsection{Checking the construction}

Having constructed all of the $\overline{B}$'s following the method of the previous two sections, it remains to show that the resulting infinite sequence gives us the CF expansion of a number $\phi(z)$ with the desired properties. To do this we will use Proposition \ref{prop:normality test} repeatedly.

First consider any base $b$ that is not a power of 2. We claim that the resulting number $\phi(z)$ is not normal to any such base, regardless of $z$. Let $p$ be the smallest odd prime factor of $b$. By the construction of $\phi(z)$ above, we see that for each $i,j$ with $b_{i,j}=p$, there is some $a,f\in\mathbb{N}$ such that 
\[
x= \frac{a}{p^f}+O\left(\frac{1}{p^{if}}\right)
\]
by conditions (A) and (E) in the construction of $\overline{B}_{i,j}$. We thus see that
\[
b^f x \equiv O\left( \frac{(b/p)^f}{p^{(i-1)f}}\right) \pmod{1}.
\]
Let us suppose that $i$ is sufficiently large so that $(b/p)<p^{(i-1)/2}$, so that
\[
b^f x \equiv O\left( \frac{1}{p^{f(i-1)/2}} \right) \pmod{1}
\]
In other words, starting from the $f$th base-$b$ digit of $\phi(z)$, and continuing for the next $f(i-1)\log_b(p)/2+O(1)$ digits, the digits are all either $0$ or $(b-1)$. Either $0$ or $(b-1)$ must be the repeated digit infinitely often. Suppose it is the $0$'s. (The other case is treated similarly.) Then we may consider the base-$b$ expansion of $\phi(z)$ to be broken up as a concatenation $w_1v_1w_2v_2w_3v_3\dots$ as in Proposition \ref{prop:normality test}, where the $v_i$'s are all of these long blocks of $0$'s.  Since $i$ can be taken arbitrarily large, the proposition clearly implies that $\phi(z)$ cannot be base-$b$ normal. 

Now we want to show that $\phi(z)$ is CF-normal if and only if $z\in C$. Suppose first that $z\in C$. Each block $\overline{B}_{i,j}$ can be decomposed as 
\[
\overline{B}_{i,j}= B_{i,j,1} B_{i,j,1}' B_{i,j,2} B_{i,j,2}',
\]
with $B_{i,j,1}$  $(\epsilon_{i,j},\mathcal{A}_{i,j},n_{i,j})$-CF-normal and $B_{i,j,2}$  $(\epsilon_{i,j},\mathcal{A}_{i,j},\lfloor \sqrt{N_{i,j}}\rfloor)$-CF-normal. Moreover, $|B_{i,j,1}'|$ is uniformly bounded over all blocks (see the above remark after bounding how many blocks satisfy condition (A) but not condition (C)), and 
\[B_{i,j,2}'=\begin{cases}\wedge, & b=2,\\ M(1)^K, & b\neq 2.\end{cases}\] 
Applying Proposition \ref{prop:normality test}, we think of the CF expansion of $\phi(z)$ as an alternating concatenation of $w$'s and $v$'s, with the $v$'s consisting of $B_{i,j,1}'$'s and $B_{i,j,2}'$'s (as well as our initial block) and the $w$'s consisting of $B_{i,j,1}$'s and $B_{i,j,2}$'s. Our assumption that $z\in C$ guarantees that $|B_{i,j,2}'|$ is eventually smaller than any (small) positive constant times $|B_{i,j,1}|+|B_{i,j,2}|$ as well. This guarantees that condition \eqref{eq:normality test bound v's} holds. The $\epsilon$'s and $\mathcal{U}$'s satisfy the desired properties clearly. We note that each $m$ is equal to either $n_{i,j}$ (when the corresponding $w$ equals $B_{i,j,1}$) or $\lfloor \sqrt{N_{i,j}}\rfloor$ (when the corresponding $w$ equals  $B_{i,j,2}$). In particular, the $m$'s are strictly increasing and $m_i \le \sqrt{|w_1\dots w_{i-1}|}$. So therefore
\begin{equation} \label{eq:confirming normality test bound}
\frac{\sum_{j\le i} m_i}{|w_1w_2\dots w_{i-1}|} \le \frac{i m_i}{|w_1w_2\dots w_{i-1}|} \le \frac{i}{\sqrt{|w_1w_2\dots w_{i-1}|}}.
\end{equation}
So condition \eqref{eq:normality test bound} holds if $\lim_{(i,j)\to \infty} i^2/\sqrt{\tilde{N}_{i,j}} = 0 $. (Here we are making use of the fact that the first $\tilde{N}_{i,j}$ digits of $\phi(z)$ should account for $O(i^2)$ of the first $w$'s.) However, the condition $N\ge i \tilde{N}$ guarantees that $\tilde{N}$ grows at least exponentially, so this is satsified, and thus $\phi(z)$ is CF-normal.

Conversely, suppose $z\not\in C$. Then we decompose $\overline{B_{i,j}}$ when $j\neq 1$ as 
\[
\overline{B}_{i,j} = B_{i,j,1} B_{i,j,1}'
\]
where $B_{i,j,1}'=(1)^K$. By our assumption that $z\not\in C$, there exists a constant $\epsilon>0$ such that $|B_{i,j,1}'|> \epsilon |\tilde{B}_{i,j}B_{i,j,1}|$ for infinitely many $(i,j)\in\Delta$. Since $B_{i,j,1}$ consists only of $1$'s, we may decompose the CF expansion of $\phi(z)$ as $w_1v_1w_2v_2\dots$ with the $w$'s equal to the blocks $B_{i,j,1}$ and the $v$'s equal to the blocks $B_{i,j,1}'$. The first part of Proposition \ref{prop:normality test} shows that $\phi(z)$ is not CF-normal.

Now assuming that $z\in C$, we will show that $\phi(z)$ is base-$2$ normal if and only if $z\in D$. This will suffice to prove the theorem, since if $z\not\in C$, then we already know that $\phi(z)$ does not belong to the set of CF-normal, absolutely abnormal numbers. 

First we claim that if $z\not\in D$, then $\phi(z)$ is not base-$2$ normal. In particular, if $z\not\in D$, then there is some value, call it $g$, that occurs infinitely often along the even indices of $z$. In particular, there is an infinite number of $i$'s, such that by our construction of $\overline{B}$ in the case $b=2$, we see that after the first $\log_2(d)$ digits, there is a string of length $\log_2(d) \left((g+1)^{-1}+O(\epsilon) \right)$ that consists solely of $0$'s or solely of $1$'s. We now wish to apply Proposition \ref{prop:normality test}, decomposing the base-$2$ expansion of $\phi(z)$ as $w_1v_1w_2v_2\dots$, with the $v$'s being these strings of repeated $0$'s or $1$'s (whichever one occurs infinitely often). Since $\epsilon$ tends to zero, \ref{prop:normality test} immediately gives that $\phi(z)$ is not base-$2$ normal. 

Alternately, if $z\in D$, then $z(2i)\to \infty$ with $i$. And thus, the length of the string of $0$'s or $1$'s referenced in the previous paragraph eventually becomes negligible compared to the number of digits that preceded them. Moreover, condition (D) guarantees that the remainder of the binary digits associated to the block $\overline{B}_{i,j}$ (when $j=1$) are $(\epsilon_{i,j},i,n_{i,j})$-normal; and condition (E) guarantees that (when $j\neq 1$), the binary digits associated to $B_{i,j}M$ can be divided into $(\epsilon_{i,j},i,n_{i,j})$-normal strings, plus an additional string of length at most $\lceil \log_2 d\rceil$ (coming from $c_t$ in condition (E)). The length of this last string becomes negligible compared to the number of digits that precede them (as $i$ increases). So when we decompose the base-$2$ expansion of $\phi(z)$ as $w_1v_1w_2v_2\dots$ for the purposes of Proposition \ref{prop:normality test 2} (not Proposition \ref{prop:normality test}), we will let the binary strings that appear as $c_t$ in condition (E) be considered as part of the $v$'s in addition to the strings of repeated $0$'s or $1$'s from the last paragraph. Also, since we have $z\in C$, the binary digits associated to the string $(1)^K$ appended at the end of the block $\overline{B}_{i,j}$ when $j\neq 1$ must also be negligible in length (see Lemma \ref{lem:length - adding block of 1s}). So these digits are also treated as part of the $v$'s.
Thus, the $v$'s satisfy \eqref{eq:normality test bound v's}. All the remaining strings will make up our $w$'s: these correponds to the strings $c_1$ (possibly truncated) from condition (D) as well as the strings $c_1,c_2, \dots c_{t-1}$ from condition (E). For condition \eqref{eq:normality test bound}, we may use much the same argument as we did above (see \eqref{eq:confirming normality test bound}); however, we need to be careful on two fronts. First, in the prior argument, a given block $\overline{B}_{i,j}$ decomposed into a bounded number of $w$'s for applying the proposition. Now, due to condition (E), a given block $\overline{B}_{i,j}$ could decompose into $t$ different $w$'s where $t\ll i$ (see the proof of Lemma \ref{lem:Copeland-Erdos fraction variant}). So the first $\tilde{N}_{i,j}$ CF digits of $\phi(z)$ could account for $O(i^3)$ of the first $w$'s. Second, $|w|$ now measures the length of binary strings rather than CF blocks, but due to Lemma \ref{lem:CF to base b digits}, we know that the length of a binary string is at least a constant times the length of the corresponding CF block. Combining these, we see that \eqref{eq:normality test bound} holds provided $\lim_{(i,j)\to \infty} i^3/\sqrt{\tilde{N}_{i,j}} = 0 $, which is true for the same reasons as before. Finally, condition \eqref{eq:normality test bound 2} holds by \eqref{eq:epsilon decay req} and the growth rate on the $N_{i,j}$'s. Thus we get that $\phi(z)$ must be base-2 normal by Proposition \ref{prop:normality test 2}.

This completes the proof.

\section{Proof of Theorem \ref{thm:secondary}}

We consider the case with $b=2$ and $b'=3$. All other cases are similar. We follow the main points of the previous proof and simply remark on where they differ. 

To begin with, in this case, we define $\Delta = \{(i,j):i\in\mathbb{N}, j\in \{1,2\}\}$. The next change we make is to conditions (D) and (E) in selecting the block $B$. We replace these with the following:

\begin{enumerate}
    \item[(D$'$)] If $b=2$, then let $a/d=r_{\tilde{B}B}$. Let $c_0,c_1$ be strings such that $|c_0|=L_2(C_{\tilde{B}})$ and $c_0c_1=a$, when $a$ is seen as an appropriate binary string. Then we have that $c_1$ is $(\epsilon,i,n)^*$-normal. Moreover, let $c'_0c'_1c'_2c'_3=S_3(C_{\tilde{B}B})$ with $c'_0=S_3(C_{\tilde{B}})$, and $c'_1,c'_2$ all have length $J= \lceil \log_3 d \rceil - L_3(C_{\tilde{B}})$ and $|c'_t|\le L_3(C_{\tilde{B}})+O(1)$. Then both of the strings $c'_1, c'_2$ are $(\epsilon,i,n)^*$-normal.
    \item[(E$'$)] If $b=3$, then let $a/d=r_{\tilde{B}B}$. Let $c_0,c_1$ be strings such that $|c_0|=L_3(C_{\tilde{B}})$ and $c_0c_1=a$, when $a$ is seen as an appropriate ternary string. Then we have that $c_1$ is $(\epsilon,i,n)^*$-normal. Moreover, let $c'_0c'_1c'_2c'_3=S_2(C_{\tilde{B}B})$ with $c'_0=S_2(C_{\tilde{B}})$, and $c'_1,c'_2$ all have length $J= \lceil \log_2 d \rceil - L_2(C_{\tilde{B}})$ and $|c'_t|\le L_2(C_{\tilde{B}})+O(1)$. Then both of the strings $c'_1, c'_2$ are $(\epsilon,i,n)^*$-normal.
\end{enumerate}

In the part of the proof where we count how many blocks satisfy condition (A) but not condition (D$'$), we apply the same argument, except we must now also employ Lemma \ref{lem:Copeland-Erdos fraction variant 2} to bound the number of blocks which could give rise to one of $c'_1,c'_2$ being not-$(\epsilon,i,n)^*$-normal. But it is easy to see that this bound can also be taken to be insignificant compared to the size of the number of blocks satisfying condition (A). (In particular, a similar bound was used in the previous proof in counting the number of blocks which satisfy condition (A) but not condition (E).)  We do the same for the number of blocks which satisfy condition (A) but not condition (E$'$), and thus we see that there must exist a block $B$ with the desired properties.

Regardless of the base we use, we then pick $\overline{B}$ to be the longest prefix of $B$ such that
\[
L_b(C_{\tilde{B}B}) < \left( \log_b d\right) \left( 1+ \frac{1}{z(2i-\sigma)+1}\right),
\]
where $\sigma=1$ if $b=2$ and $\sigma=0$ otherwise. Clearly the claimed result that we will get 
\[
L_b(C_{\tilde{B}B}) = \left( \log_b d\right) \left( 1+ \frac{1}{z(2i-\sigma)+1}+O\left(\epsilon\right)\right)
\]
still holds. Note that this means we will never append the digit $M$ or the block $(1)^K$ to the end of $B$ to form $\overline{B}$, as we did in the proof of Theorem \ref{thm:main}.

As a result of this, it is clear for the same reasons as above that if $z\not\in C$, then $\phi(z)$ will not be $2$-normal, and if $z\not\in D$, then $\phi(z)$ will not be $3$-normal. It remains to show that with our alternate construction that we do obtain the appropriate normality if $z\in C$ or $z\in D$.

Since the cases are similar, we will show that if $z\in C$ then $\phi(z)$ is $2$-normal. The new element to consider is the binary digits associated to the  blocks $\overline{B}$ when $b=3$. Since $\overline{B}$ is taken as a prefix of $B$ in this case, we have that $C_{\tilde{B}{B}}\subset C_{\tilde{B}\overline{B}}$. Therefore, by Remark \ref{rem:base-b digit variants}, 
 $S_2(C_{\tilde{B}\overline{B}})$ (up to addition by $1$ or $2$) is a prefix of $C_{\tilde{B}B}$. The corresponding binary digits associated to $C_{\tilde{B}\overline{B}}$ not associated to $C_{\tilde{B}}$ consist of the string $c_1'$, and either a prefix of $c'_2$ or the entirety of $c'_2$ and a prefix of $c'_3$ (possibly these strings might be off by addition of $1$ or $2$ again). By condition (E$'$), $c'_1,c'_2$ are $(\epsilon,i,n)^*$-normal, so following Remark \ref{rem:base-b digit variants} and Lemma \ref{lem: adding does not alter normality}, we can break the binary digits of associated to $C_{\tilde{B}\overline{B}}$ not associated to $C_{\tilde{B}}$ into two $(\epsilon,i,n)$-normal strings that are prefixes of $c'_1$ and $c'_2$, which we can associate with the $w$'s in Proposition \ref{prop:normality test}, and some additional strings (coming from at most $n+2$ digits at the end of $c'_1,c'_2$ and the entirety of $c'_3$) of total length not exceeding $2(n+2)+L_2(C_{\tilde{B}})+O(1)$, which are associated with the $v$'s. Provided $d$ is sufficiently large compared with $L_2(C_{\tilde{B}})$ and $n$, it is clear that the necessary conditions are satisfied. (We note that in other cases, we need a more general form of Lemma \ref{lem: adding does not alter normality}, but this is easy to do.)

\section{Proof of Theorem \ref{thm:extra}}

To begin with, we fix the block $A$, and let $m$ be the floor of $\sqrt{d}\exp(-\log^2 \log d)$. Moreover, let $N$ be a positive integer satisfying $m=\lfloor\exp((\lambda_{\text{KL}}+\epsilon)N)\rfloor $. (We pause to note the difference between what we are doing and the proof of Theorem \ref{thm:main} above. In that proof we began with $N$, derived $m$ and then $d$ from it. In this case, we begin with $d$, and derive $m$ and then $N$ from it. While the relationship between $m$ and $N$ is the same in each case, the relationship between $m$ and $d$ is slightly different here.) Let  $n=\lfloor \sqrt{N}\rfloor$ and let $c=\log_2 e$.

With these variables, suppose a block $B$ contains two $(\epsilon/2,A)$-normal blocks and at most $2n+15+4\lceil\frac{c}{2}\log^2 \log d\rceil$ other digits, and also suppose that $|B|\ge 2N-1$. Then, by Lemma \ref{lem:epsilon-u subwords}, we see that $B$ is 
\[
\left(\frac{\epsilon}{2}+ \frac{(|A|+2)\left( 2n+25+4\lceil\frac{c}{2}\log^2 \log d\rceil \right)}{|B|-|A|-1} ,A \right)-\text{normal}.
\]
Since $N\asymp \log d$ and $n\asymp \sqrt{\log d}$, the second fraction here will tend to $0$ as $d$ increases (assuming $A$ stays fixed). In particular, we may assume $d$ is large enough that the second fraction is at most $\epsilon/2$, so that the above assumptions guarantee $B$ is $(\epsilon,A)$-normal. 

Recall that a block $B$ is said to be $(\epsilon,A,n)$-normal if every prefix of $B$ whose length is a multiple of $n$ is $(\epsilon,A)$-normal. We will now say that a block $B$ is $(\epsilon,A,n;N)$-normal if every prefix of $B$ whose length equals $N+kn$ for some non-negative integer $n$ is $(\epsilon,A)$-normal.

Consider blocks $B$ of CF digits together with following conditions:
\begin{enumerate}
    \item[(A$'$)] $q(B)=d$.
    \item[(B$'$)] Let $B'$ be the prefix of $B$ such that $r_{B'}=r_B(m)$. Then $B'$ is the concatenation of an $(\epsilon/2,A,n;N-1)$-normal block of length at least $N$ and a block of length at most $5$.
    \item[(C$'$)] Let $B''$ be the prefix of $B^*$ such that $r_{B''}=r_{B^*}(m)$. Then $(B'')^*$ is the concatenation of a block of length at most $5$ and an $(\epsilon/2,A,n;N)$-normal block of length at least $N$.
\end{enumerate}

Suppose a block satisfies all of conditions (A$'$), (B$'$), and (C$'$). Then we have that $|B|\ge 2N-1$ by the upper bound of Lemma \ref{lem:splitter}. By the lower bound of Lemma \ref{lem:splitter}, we know that $B$ can consist of at most the digits of $B'$, the digits of $B''$ and at most $5+ 4\lceil\frac{c}{2}\log^2 \log d\rceil$ other digits. However, by the definition of $(\epsilon/2,A,n)$-normality, we know that $B'$ and $(B'')^*$ both consist of a string of length at most $5$, a string of length at most $n$, and an $(\epsilon/2,A)$-normal block. It is possible that $B'$ and $(B'')^*$ are both $(\epsilon/2,A)$-normal blocks that overlap in $B$ at a single digit (again by the upper bound of Lemma \ref{lem:splitter}); however, if this happens, we could just remove the last $n$ digits from $B'$ and consider the remaining prefix of $B'$ as our desired $(\epsilon/2,A)$-normal block. There will always be digits we can remove and retain an $(\epsilon/2,A)$-normal block, because $|B'|\ge N$, but the smallest $(\epsilon/2,A)$-normal prefix has length $N-1$.

Thus, we get that $B$ can be written as two $(\epsilon/2,A$)-normal blocks, plus at most $2n+15+4\lceil\frac{c}{2}\log^2 \log d\rceil$ other digits.
In other words, these three conditions imply by our above work that $B$ is $(\epsilon,A)$-normal. Therefore, it suffices to bound the number of blocks that satisfy condition (A$'$), but fail either condition (B$'$) or condition (C$'$).

For this we can almost use the exact same estimations as we did in the proof of Theorem \ref{thm:main} to bound the number of blocks that satisfied condition (A) but not conditions (B) or (C). However, there are some differences.

First, by Lemma \ref{lem:Hensley length bound}, the number of possible blocks $B'$ (with no dependence on arising from $B$) of length less than $N$ with $q(B')\le m$ is at most $O(m^2 e^{-c_1 (\epsilon \sqrt{\log m})^2})$ $ = O(d^{1-\epsilon'})$ for some $\epsilon'>0$. Applying Proposition \ref{prop:total points in Fareys} and Lemma \ref{lem:AB Lemma 4} as we did in the proof of Theorem \ref{thm:main}, we see that the number of $B$'s giving rise to a $B'$ or $B''$ of length strictly less than $N$ is at most $O(d^{1-\epsilon'/2})$.

Second, we no longer wish to use Lemmas \ref{lem:Scheerer-extended} or \ref{lem:Scheerer reversed}, but rather a variant of this lemma that measures the size of the set
\[
\mu\left( \bigcup_{k=0}^K E^c_{CF} (\epsilon, A; N-1+kn) \right) 
\]
or the appropriate starred variant.
However, it is easy to adjust the proof of the lemmas to see that these will be bounded by $\xi \exp(-\eta N/\log N)$, where $\xi,\eta$ are dependent only on $\epsilon$ and $A$.

Thus, by the same method as before, the number of blocks $B$ where $B'$ fails to be $(\epsilon/2,A,n;N)$-normal will be at most
\[
O\left(   dN \cdot \xi \exp\left(-\eta\frac{N}{\log N}\right)\right),
\]
and a similar bound will hold on the number of blocks $B$ where $B''$ fails to be $(\epsilon/2,A,n;N)^*$-normal. Since $N\asymp \log d$, this latter bound is far larger than the $O(d^{1-\epsilon'/2})$ seen before. And thus the total number of blocks $B$ that satisfy condition (A$'$) but not conditions (B$'$) or (C$'$) is at most $O(d^{1-\eta'/\log\log d})$ for some constant $\eta'$, as desired.

\bibliographystyle{plain}

\end{document}